\documentclass[12pt,a4paper,twoside,reqno]{amsart} 
 \usepackage{amsfonts,amssymb,amscd,amsmath,enumerate,verbatim,calc} 

\usepackage{color,psfrag}

\newtheorem{proposition}{Proposition}[section]
\newtheorem{theorem}[proposition]{Theorem}
\newtheorem{corollary}[proposition]{Corollary}
\newtheorem{definition}[proposition]{Definition}
\newtheorem{lemma}[proposition]{Lemma}

\allowdisplaybreaks

\theoremstyle{remark}
\newtheorem{remark}{Remark}

\begin{document}
\title{On a Tur{\'a}n's theorem for small primes}
\author{Tokuhon Makoto Minamide, Haruka Sakai,\\
 and\\
 Yoshio Tanigawa}
\thanks{2020 Mathematics Subject Classification: 11N37, 11N36\\
Key words and phrases: The number of distinct prime divisors, Tur{\'a}n's theorem, The method of Granville and Soundararajan\\
This work is supported by JSPS KAKENHI No. 22K03245.}
\maketitle
\begin{abstract}
Denote by $\omega(n)$ the number of distinct prime divisors of the natural number $n$.
In 2007, Granville and Soundararajan gave a quite new method to compute the higher  moments
$\sum_{n\leq x}(\omega(n)-\log\log x)^{k}$, for a wide range of integers $k\geq 2$. 
In this notes, we shall apply the method for $\omega_{z}(n)$ which denotes the number of distinct prime divisors $\leq z$ of $n$.
Especially, for odd integers $k\geq 3$, we lead asymptotic formulas for $\sum_{n\leq x}(\omega_{z}(n)-\log\log z)^{k}$,
under certain restrictions on $k$, $z$, and $x$. Also, we refer to a framework of the approach.  
\end{abstract}
%
%
%
%
%
\section{Introduction}
Let $\omega(n)$ denote the number of distinct prime divisors of a natural number $n$. 
Hardy and Ramanujan studied the behaviour of $\omega(n)$ in the literature \cite{HR} and proved a theorem:

\textit{
For any given $\varepsilon$ ($0<\varepsilon <1$), the number of $n\leq x$ which does not satisfy
\begin{align*}
(1-\varepsilon)\log\log n <\omega (n) <(1+\varepsilon)\log\log n
\end{align*}
is $o(x)$, as $x\to\infty$.
}

Regarding this theorem Tur{\'a}n gave a simple proof in \cite{turan} 
based on the following formula.
\begin{align}\label{turan-th}
\sum_{n\leq x}(\omega(n) -\log\log x)^{2}=O(x\log\log x), \quad (x\to\infty).
\end{align}
See, \cite[p.~474, Theorem 431]{HW}, also. Tur{\'a}n's method  
has been developed by many authors and various results  
were obtained, for instance,
by Erd{\H o}s \cite{E}, Er{\H o}ds and Kac \cite{EK}, Liu \cite{YRLiu},
Murty and Murty \cite{MuMu}, Murty and Saidak \cite{MuSa}.
Concerning the Erd{\H o}s-Kac theorem \cite{EK},  
as a generalization of (\ref{turan-th}), the higher power moments 
\begin{align}\label{turan-th-hm}
\sum_{n\leq x}(\omega(n)-\log\log x)^{k}\quad (x\to\infty, k=2,3,\ldots)
\end{align}
were investigated by Delange \cite{D}, \cite{D2} and Halberstam \cite{Hal}. 
It is natural to ask a formula in (\ref{turan-th-hm}) 
for wider range of $k$.  
Indeed, Granville and Soundararajan obtained such results uniformly in $k$ and $x$ under a certain restriction. 
Their method is quit new.
The authors mentioned that it is inspired by \cite{MoSo} due to Montgomery and Soundararajan. See Theorem \ref{GS-th-1} below.
To state the theorem we shall put for any integer $j\geq 1$
\begin{align}\label{teigi-C-j}
C_{j}:=\frac{\Gamma(j+1)}{2^{\frac{j}{2}}\Gamma \left(\frac{j}{2}+1\right)}=\frac{2^{\frac{j}{2}}}{\sqrt{\pi}}\Gamma \left(\frac{j}{2}+\frac{1}{2}\right),
\end{align}
where $\Gamma (\cdot)$ denotes the Gamma function, 
the right-hand side of (\ref{teigi-C-j})
is derived from the duplication formula \cite[p.~57]{Tit}. For convenience, we write $C_{0}:=1$.
It is easily seen that
\begin{align*}
C_{1}=\sqrt{\frac{2}{\pi}},\quad C_{2}=1,\quad C_{3}=2\sqrt{\frac{2}{\pi}},
\end{align*}
\begin{align*}
C_{j}=\sqrt{2}\left(\frac{j}{e}\right)^{\frac{j}{2}}\frac{\left(1+\frac{1}{j}\right)^{\frac{j}{2}}}{\sqrt{e}}(1+o(1)) \sim \sqrt{2}\left(\frac{j}{e}\right)^\frac{j}{2} \gg 2^{j} \quad (j\to \infty),
\end{align*}
and
\begin{align*}
\frac{C_{j-1}}{C_{j}}=\frac{1}{\sqrt{2}}\frac{\Gamma \left(\frac{j}{2}\right)}{\Gamma \left(\frac{j}{2}+\frac{1}{2}\right)} \asymp \frac{1}{\sqrt{j}} \quad (j\to\infty),
\end{align*}
by the Stirling formula and Lemma in \cite[p.~58]{Tit}, respectively.
As for sequences $\{a_{j}\}$, $\{b_{j}\}$, generally, 
$a_{j}\sim b_{j}$ denotes $a_{j}/b_{j}\to 1$ as $j\to\infty$, and $a_{j} \asymp b_{j}$ denotes $a_{j}\ll b_{j}$ and $a_{j}\gg b_{j}$, as $j\to\infty$.
These estimates on $C_{j}$ are used in this notes.

Concerning the higer moments (\ref{turan-th-hm}), 
Granville and Soundararajan showed the following theorem.
\begin{theorem}[{Granville and Soundararajan \cite[p.~17, Theorem 1]{GS}}]\label{GS-th-1}
Keep the notation as above.
Let $x\geq 1$ be sufficiently large. For positive integers $k\geq 2$, we assume that $k\leq (\log\log (x^{1/k}))^{1/3}$.
Then, we have the following (a) and (b), uniformly in $k$ and $x$. 
\begin{enumerate}
\item[\rm (a)]For even integers $k\geq 2$, we have
\begin{align*}
\sum_{n\leq x}\left(\omega(n) -\log\log x\right)^{k}
=C_{k} x (\log\log x)^{\frac{k}{2}} \left(1+O\left(\frac{k^{3/2}}{(\log\log x)^{1/2}}\right)\right).
\end{align*}  
\item[\rm (b)] For odd integers $k\geq 3$, we have
\begin{align*}
\sum_{n\leq x}(\omega (n) -\log\log x)^{k} \ll C_{k}x(\log \log x)^{\frac{k}{2}}\frac{k^{3/2}}{(\log\log x)^{1/2}}.
\end{align*}
\end{enumerate}
\end{theorem}

In this notes, applying the method of the proof of Theorem \ref{GS-th-1} in \cite{GS} we shall obtain
a certain Tur{\'a}n's theorem for small primes. Define
\begin{align}\label{teigi-omega-z}
\omega_{z}(n):=\sum_{\begin{subarray}{c}p|n \\ p\leq z\end{subarray}}1
\end{align}
for any natural number $n$ and any real number $z \geq 1$. 
Instead of (\ref{turan-th-hm}), we shall investigate the moments
\begin{align*}
\sum_{n\leq x}(\omega_{z}(n)-\log\log z)^{k},
\end{align*}
uniformly in $k$, $z$, and $x$ under certain conditions.
To state our theorem define 
\begin{align}\label{teigi-teisu-B}
B:=\lim_{x\to\infty}\left(\sum_{p\leq x}\frac{1}{p}-\log\log x\right)\ (\approx 0.261),
\end{align}
and for each integer $j\geq 3$ 
\begin{align}\label{teigi-C-j-tilde}
\tilde{C}_{j}:=\frac{\sqrt{2}}{6}\frac{(j-1)\Gamma\left(\frac{j}{2}+1\right)}{\Gamma \left(\frac{j}{2}+\frac{1}{2}\right)}C_{j}
=\frac{2^{\frac{j-1}{2}}(j-1)}{3\sqrt{\pi}}\Gamma \left(\frac{j}{2}+1\right),
\end{align}
here, note that
\begin{align*}
\tilde{C}_{j} \asymp j^{\frac{3}{2}}C_{j}\quad \textit{and}\quad \tilde{C}_{3}=1.
\end{align*}

We shall prove the following theorem.
\begin{theorem}\label{Hatsumi-1}
Keep the notation as above.
Let $x\geq 1$ and $z\geq 1$ be sufficiently large. For positive integers $k\geq 2$, we assume that
$k \leq (\log \log z)^{1/3} $ and  $e^{e^{k^3}}\leq z\leq x^{1/k}$. Then, we have
the following (a) and (b), uniformly in $k$, $z$, and $x$.
\begin{enumerate}
\item[\rm (a)] For even integers $k\geq 2$, we have
\begin{align*}
\sum_{n\leq x}(\omega_{z}(n)-\log\log z)^{k} = C_{k} x(\log \log z)^{\frac{k}{2}} \left( 1+O\left( \frac{k^{3}}{\log\log z}\right) \right).
\end{align*}
\item[\rm (b)] For odd integers $k\geq 3$, we have three formulas (I), (II), and (III) as follows.\\
 
\begin{enumerate}
\item[\rm (I)]If $3\leq k \leq (\log\log z)^{1/7}$, then we have
\begin{align*}
&\sum_{n\leq x}(\omega_{z}(n)-\log\log z)^{k}\\
&=\tilde{C}_{k}x (\log\log z) ^{\frac{k-1}{2}} +B(k^{3/2}C_{k}) x (\log\log z)^{\frac{k-1}{2}} \left(\frac{C_{k-1}}{k^{1/2} C_{k}}\right)\\
&\quad +O\left((k^{3/2}C_{k}) x (\log\log z)^{\frac{k-1}{2}} \cdot \frac{1}{(k\log\log z)^{1/2}}\right).
\end{align*}
\item[\rm (II)] If $(\log\log z)^{1/7}<k <(\log\log z)^{1/4}$, then we have
\begin{align*}
&\sum_{n\leq x}(\omega_{z}(n)-\log\log z)^{k} \\
&=\tilde{C}_{k} x (\log\log z)^{\frac{k-1}{2}} + B \left(k^{3/2}C_{k}\right) x (\log\log z)^{\frac{k-1}{2}} \left(\frac{C_{k-1}}{k^{1/2}C_{k}}\right)\\
&\quad +O\left( \left(k^{3/2}C_{k}\right) x (\log\log z)^{\frac{k-1}{2}}\frac{k^{3}}{\log\log z}\right).
\end{align*}
\item[\rm (III)] If $(\log\log z)^{1/4}\leq k \leq (\log\log z)^{1/3}$, then we have
\begin{align*}
&\sum_{n\leq x}\left(\omega_{z}(n)-\log\log z\right)^{k}\\
&= \tilde{C}_{k} x (\log\log z)^{\frac{k-1}{2}} +O\left(\left(k^{3/2}C_{k}\right)x (\log\log z)^{\frac{k-1}{2}} \frac{k^{3}}{\log\log z}\right).
\end{align*}
\end{enumerate}
\end{enumerate}
\end{theorem}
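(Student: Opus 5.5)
We follow the Granville--Soundararajan scheme. Write $\omega_z(n)-\log\log z=\sum_{p\le z}(1_{p|n}-\tfrac1p)+\big(\sum_{p\le z}\tfrac1p-\log\log z\big)$. The second sum equals $B+O(1/\log z)$. So we first study $\sum_{n\le x}\big(\sum_{p\le z}f_p(n)\big)^k$ with $f_p(n)=1_{p|n}-1/p$, and afterwards restore the shift by $\mu:=\sum_{p\le z}1/p-\log\log z$ using the binomial theorem. Expanding the $k$-th power gives
\[
\sum_{p_1,\dots,p_k\le z}\sum_{n\le x}f_{p_1}(n)\cdots f_{p_k}(n).
\]
For fixed $p_1,\dots,p_k$, let $r=\mathrm{rad}(p_1\cdots p_k)\le z^k\le x$. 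The product $\prod f_{p_i}(n)$ depends only on $n \bmod r$. Hence the inner sum is $\frac{x}{r}\sum_{a\bmod r}\prod f_{p_i}(a)+O(2^k)$, and the main part factors as $x\prod_{p|r}E_p(m_p)$, where $E_p(m)=\mathbb{E}[(X_p-1/p)^m]$ for a Bernoulli variable $X_p$ with mean $1/p$. The error terms total $O(2^k\pi(z)^k)$. Since $z\le x^{1/k}$, this is $O(x\,2^k/(\log z)^k)$, which is negligible. The problem therefore reduces to the probabilistic moment $M_k:=\mathbb{E}[(\sum_{p\le z}(X_p-1/p))^k]$ for independent $X_p$, multiplied by $x$.

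Next we compute $M_k$ by grouping terms by the set partition of $\{1,\dots,k\}$ induced by equal primes. Since $E_p(1)=0$, every block must have size at least $2$. We have $E_p(2)=\frac1p(1-\frac1p)$ and $E_p(m)=\frac1p+O(1/p^2)$ for $m\ge2$. Put $\sigma=\sum_{p\le z}E_p(2)=\log\log z+B+O(1)$ after the $1/p^2$ corrections; more precisely, $\sigma=\log\log z+B-\sum_p p^{-2}+o(1)$. Distinct primes across blocks are handled as in \cite{GS} by inclusion--exclusion, which costs a relative factor $1+O(k^2/\log\log z)$.

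For even $k$, the dominant partitions are perfect matchings. There are $k!/(2^{k/2}(k/2)!)=C_k$ of them, each contributing $\sigma^{k/2}$. Partitions with one block of size $4$ (or two of size $3$) lose a factor $\log\log z$ and gain a combinatorial factor $\ll k^3$ relative to $C_k$. Together with $\mu^2$-type shift terms, this gives part (a) with error $O(k^3/\log\log z)$.

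For odd $k$, the leading partitions consist of one block of size $3$ and pairs. Their number is $\binom k3 C_{k-3}$, and each contributes $(\sum_p E_p(3))\sigma^{(k-3)/2}\approx(\log\log z)^{(k-1)/2}$. One checks that $\binom k3C_{k-3}=\tilde C_k$: this matches \eqref{teigi-C-j-tilde}, and $k=3$ gives $1$. The shift enters through the term $k\mu M_{k-1}\approx kBC_{k-1}(\log\log z)^{(k-1)/2}$, which is exactly $B(k^{3/2}C_k)(\log\log z)^{(k-1)/2}\cdot C_{k-1}/(k^{1/2}C_k)$.

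The main obstacle is bookkeeping the secondary terms in the odd case so that the error has the claimed size in each range. The competing secondary contributions are:
\begin{itemize}
\item the $O(1)$ part of $\sigma-\log\log z$ inside $\sigma^{(k-3)/2}$, giving a relative error $\ll k/\log\log z$ against $\tilde C_k$;
\item partitions with a block of size $5$, or with a $3$-block and a $4$-block, giving a relative error $\ll k^3/\log\log z$;
\item the terms $\binom k3\mu^3M_{k-3}$ and similar.
\end{itemize}
All of these must be measured against the normalization $k^{3/2}C_k(\log\log z)^{(k-1)/2}$.

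The $B$-term has relative size $C_{k-1}/(k^{1/2}C_k)\asymp 1/k$. It is only meaningful when it exceeds the error $k^3/\log\log z$, that is, when $k<(\log\log z)^{1/4}$. This explains why case (III) absorbs it into the error. In the range $k\le(\log\log z)^{1/7}$, one must show the secondary terms are actually $O((k\log\log z)^{-1/2})$. Here we would use the refined bound that the error in $M_{k-1}$ is relatively $O(k^3/\log\log z)$, so after multiplying by $k^{1/2}\cdot(1/k)$ its contribution is at most $k^{5/2}/\log\log z\le(k\log\log z)^{-1/2}$ exactly when $k^{3}\le(\log\log z)^{1/2}$. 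Reconciling these exponents carefully, together with the condition $z\ge e^{e^{k^3}}$ that keeps $\log\log z\ge k^3$, is where most of the work lies.
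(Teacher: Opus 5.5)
Your outline follows the paper's argument, written in probabilistic language:
\begin{itemize}
\item the reduction to $x\sum G(p_1\cdots p_k)$ over square-full products, plus $O(2^k\pi(z)^k)$;
\item the one-cube-plus-pairs configurations, counted by $\binom{k}{3}C_{k-3}=\tilde C_k$;
\item the binomial expansion in the shift $\mu=B+O(1/\log z)$, with Proposition~\ref{GS-prop-2}(a) supplying the term $k\mu M_{k-1}$.
\end{itemize}
The one genuine difference is how you handle distinct primes in the leading configurations. A union bound over coincident primes needs only $|G(q^3)|\le G(q^2)$ and $\sum_q G(q^2)^2<\infty$, and costs a relative $O(k^2/(\log\log z)^2)$. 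So it removes the need for positivity and monotonicity of $G(q^3)$, on which the paper's lower bound for $I_{\frac{k-1}{2}}(k)$ rests (Remark~\ref{RRRemark}). That part is fine.

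The gap is at the end, in exactly the step that produces the three ranges.
\begin{itemize}
\item You never bound the lower block $\sum_{j\le k-2}\binom{k}{j}\mu^{k-j}\sum_{n\le x}\bigl(\sum_{p\le z}f_p(n)\bigr)^j$ uniformly in $k$. You name $\binom{k}{3}\mu^3M_{k-3}$ ``and similar'' and stop.
\item You do not sum the lower partitions of $M_k$ over all $s\le\frac{k-3}{2}$. Your list also omits three $3$-blocks, which is the configuration that actually produces the factor $k^3$.
\item Your diagnosis of range (I) targets the wrong term. The error in $M_{k-1}$ contributes relatively $\frac1k\cdot\frac{k^3}{\log\log z}=\frac{k^2}{\log\log z}$ (not $k^{5/2}/\log\log z$), and it plays no role in the case split.
\end{itemize}

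In the paper, $(k\log\log z)^{-1/2}$ is the size of that lower block under the crude uniform bound $\bigl|\sum_{n\le x}(\sum_{p\le z}f_p(n))^j\bigr|\ll C_jx(\log\log z)^{j/2}+2^j\pi(z)^j$ (Remark~\ref{yakult-1000-919}). Since $\binom{k}{m}C_{k-m}/C_k\le (k+2)^{m/2}/m!$ and $k^3\le\log\log z$, the block is $\ll kC_kx(\log\log z)^{\frac{k-2}{2}}$, dominated by $j=k-2$. Against $k^{3/2}C_kx(\log\log z)^{\frac{k-1}{2}}$ this is relatively $(k\log\log z)^{-1/2}$.

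So every secondary term is relatively $\ll\max\{(k\log\log z)^{-1/2},\,k^3/\log\log z\}$.
\begin{itemize}
\item The first is the larger iff $k\le(\log\log z)^{1/7}$, which gives (I).
\item The $B$-term has relative size $\asymp 1/k$, and it exceeds $k^3/\log\log z$ iff $k<(\log\log z)^{1/4}$. This is what separates (II) from (III).
\end{itemize}
Nothing delicate happens in range (I). If you instead bound the odd $j$ via Lemma~\ref{Natsuki-1}, the block drops to relative $O(k/\log\log z)$. You then get the uniform error $k^3/\log\log z$, which implies (I) on its range.
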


\bigskip

\begin{remark}
According to Theorem \ref{GS-th-1}, $k$th power moment of $\omega(n)-\log\log x$ has a main term when $k$ is even,
while not when $k$ is odd. However, if we pose a restriction on prime divisors of $n$ as in (\ref{teigi-omega-z}),
$k$th power moments of $\omega_{z}(n)-\log\log z$ has a main term in both cases that $k$ is even and odd. Moreover
for small $k$ we can find more accurate asymptotic formulas, see Theorem \ref{Sakai-Haruka-Theorem}, below.
\end{remark}

In Section 2, we reconsider Proposition 2 in \cite[p.~17]{GS} to show Theorem \ref{Hatsumi-1}. And we prove Theorem \ref{Hatsumi-1} in Section 3. Moreover,
in Section 4 we shall also discuss a framework of the theorem.

\section{On Proposition 2 by Granville and Soundararajan}
To deduce (b) of Theorem \ref{Hatsumi-1} which is the main result of this notes we shall reconsider Proposition 2 given by Granville and Soundararajan in \cite[p.~17]{GS}.
To state their proposition we fix some notation. Let $\pi(z)$ denote the number of primes up to $z$, as usual. 
Moreover, we recall the function $f_{p}(n)$
which is introduced in \cite{GS}:
 
For any natural number $n$ and any prime $p$, the $f_{p}(n)$ is defined as
\begin{align}\label{teigi-f-1}
f_{p}(n):=
\begin{cases}
1-\frac{1}{p} & (\textit{if $p|n$}),\\
-\frac{1}{p}  & (\textit{if $p\nmid n$}).
\end{cases}
\end{align}
For convenience, we write
\begin{align}\label{teigi-f-2}
f_{p_{1}\cdots p_{l}}(n):=f_{p_{1}}(n)\cdots f_{p_{l}}(n)
\end{align}
for any primes $p_{1}, \ldots, p_{l}$ and any natural number $n$.

In \cite{GS}, Granville and Soundararajan showed the following proposition to deduce Theorem \ref{GS-th-1} (\cite[p.~17, Theorem 1]{GS}).
\begin{proposition}[{\cite[p.~17, Proposition 2]{GS}}]\label{GS-prop-2}
Let $k\geq 2$ be any integer, $x\geq 1$ and $z\geq 1$ real numbers.
Further assume that $k\leq (\log\log z)^{1/3}$.
Then, uniformly in $k$, $z$, and (x) we have the following (a) and (b).
\begin{enumerate}
\item[\rm (a)] For even integers $k\geq 2$,  we have
\begin{align*}
\sum_{n\leq x}\left(\sum_{p\leq z}f_{p}(n)\right)^{k}
=C_{k}x (\log\log z)^{\frac{k}{2}}\left(1+O\left(\frac{k^{3}}{\log\log z}\right)\right)+O\left(2^{k}\pi(z)^{k}\right).
\end{align*}
\item[\rm (b)]For odd integers $k\geq 3$, we have
\begin{align*}
\sum_{n\leq x} \left(\sum_{p\leq z}f_{p}(n)\right)^{k} \ll C_{k} x (\log\log z)^{\frac{k}{2}}\frac{k^{3/2}}{(\log\log z)^{1/2}} +2^{k}\pi(z)^{k}.
\end{align*}
\end{enumerate}
\end{proposition}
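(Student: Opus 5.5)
Expanding the $k$th power and interchanging summations, I write
\begin{align*}
\sum_{n\leq x}\Big(\sum_{p\leq z}f_{p}(n)\Big)^{k}=\sum_{p_{1},\ldots,p_{k}\leq z}\ \sum_{n\leq x}f_{p_{1}\cdots p_{k}}(n).
\end{align*}
For a fixed tuple, let $q_{1},\ldots,q_{s}$ be the distinct primes occurring, with multiplicities $\alpha_{1},\ldots,\alpha_{s}$ ($\sum\alpha_{i}=k$). Since $f_{p}(n)$ depends only on whether $p\mid n$, the function $n\mapsto f_{p_{1}\cdots p_{k}}(n)$ is periodic modulo $r=q_{1}\cdots q_{s}$. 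Summing over $n\leq x$ therefore gives $\frac{x}{r}\sum_{n \bmod r}f_{p_1\cdots p_k}(n)+O(\max|f|\cdot r)$. By the Chinese remainder theorem, the complete sum factors as $x\prod_{i}\mathbb{E}[f_{q_i}^{\alpha_i}]$, where $\mathbb{E}[f_q^{\alpha}]=\frac1q(1-\frac1q)^{\alpha}+(1-\frac1q)(-\frac1q)^{\alpha}$. The error is at most $\prod_i(\ldots)\cdot r$. I would bound it crudely, via $|f_q|\le 1$ and the fact that the count of $n\le x$ in residue classes differs from $x/r$ by at most one per class, by $\sum_{d\mid r}1=2^{s}\le 2^{k}$. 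Summing over all tuples then gives the error $O(2^{k}\pi(z)^{k})$. The main term is $x\sum_{\text{tuples}}\prod_{i}\mathbb{E}[f_{q_i}^{\alpha_i}]$.

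Next I analyse the main term. We have $\mathbb{E}[f_q]=0$, so any tuple with some $\alpha_i=1$ contributes nothing. For $\alpha\ge2$, $\mathbb{E}[f_q^{\alpha}]=\frac1q(1+O(1/q))$, and in absolute value it is $\le \frac1q$. I group tuples by the set partition of $\{1,\ldots,k\}$ into blocks of sizes $\alpha_i\ge2$. For $k$ even, the perfect matchings ($s=k/2$, all $\alpha_i=2$) number $\frac{k!}{2^{k/2}(k/2)!}=C_{k}$. Each contributes $\sum_{q_1,\ldots,q_{k/2}\ \text{distinct}}\prod_i\frac1{q_i}(1-\frac1{q_i})$. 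Writing $L=\sum_{p\le z}\frac1p(1-\frac1p)=\log\log z+O(1)$, this is $L^{k/2}$ minus the coincident-prime terms. The latter are bounded by $\binom{k/2}{2}L^{k/2-2}\sum_p p^{-2}\ll k^{2}L^{k/2-2}$. So the matchings give $C_kx(\log\log z)^{k/2}(1+O(k^2/\log\log z))$; note $(L/\log\log z)^{k/2}=1+O(k/\log\log z)$.

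The remaining partitions have all blocks of size $\ge2$ and $s<k/2$ blocks. For fixed $s$, each is bounded by $\ll L^{s}$. The number of partitions of $[k]$ into $s$ blocks each of size $\ge2$ is at most roughly $C_{2s}\binom{k}{2s}s^{k-2s}$, which can be seen by choosing a matching core and distributing the rest. Hence the total over $s=k/2-j$ is $\ll C_k L^{k/2}(k^{3}/L)^{j}\cdot(\text{const})^j$. This is the main obstacle: getting the combinatorial count sharp enough that the ratio of consecutive terms is $\ll k^{3}/\log\log z$. Only then, under $k\le(\log\log z)^{1/3}$ (with a small constant, or absorbing constants), does the geometric series converge and give relative error $O(k^3/\log\log z)$ for even $k$. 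I would first try the explicit estimate: the number of such partitions with $s=k/2-j$ blocks is $\le C_k (ck^{3/2})^{2j}/j!$. I would prove it by merging singletons or pairs into a matching and comparing via Stirling, using $C_{k-1}/C_k\asymp k^{-1/2}$.

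For odd $k$ no perfect matching exists, so the largest $s$ is $(k-1)/2$, coming from one triple plus pairs. The number of such partitions is $\binom{k}{3}C_{k-3}\asymp k^{3}C_{k-3}\asymp k^{3/2}C_k$. The same argument then gives the bound $\ll k^{3/2}C_kL^{(k-1)/2}=C_kL^{k/2}\cdot k^{3/2}/L^{1/2}$, with lower-$s$ terms again controlled by the geometric series. Together with the $O(2^k\pi(z)^k)$ error, this yields (b).
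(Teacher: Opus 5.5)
Your overall route matches the argument the paper reproduces from Granville--Soundararajan (proof of Lemma~\ref{Natsuki-1} and Remark~\ref{remark-b-Prop-2-1-bun}), and the main-term part works. The genuine gap is the error term.

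Periodicity modulo the kernel $R$ (your $r$) gives an error $O(R)$ per tuple. Summed over tuples this is of order $(\sum_{p\le z}p)^k$, nowhere near $O(2^k\pi(z)^k)$. You then replace $R$ by $\sum_{d\mid R}1=2^s$, but this does not follow from $|f_q|\le 1$ plus ``at most one per residue class''.

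The $2^s$ sets $\{n\le x:(n,R)=d\}$ on which $f_{p_1\cdots p_k}$ is constant are not residue classes. In general one only has $\#\{n\le x:(n,R)=d\}=\frac{x}{R}\varphi(R/d)+O(2^{\omega(R/d)})$. With $|f|\le1$ alone this yields $\sum_{d\mid R}2^{\omega(R/d)}=3^{\omega(R)}$, i.e.\ $O(3^k\pi(z)^k)$, not the stated $O(2^k\pi(z)^k)$.

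The missing input is the smallness of $f_p$ away from multiples of $p$. Namely $|f_{p_1\cdots p_k}(d)|\le\prod_{p_i\nmid d}p_i^{-1}\le 2^{-\omega(R/d)}$, which exactly cancels the M{\"o}bius error and leaves $\sum_{d\mid R}1\le 2^k$; this is how the paper does it.

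Equivalently, write $f_q(n)^{\alpha}=a_q+b_q\mathbf{1}_{q\mid n}$ with $a_q=(-1/q)^{\alpha}$ and $b_q=(1-1/q)^{\alpha}-(-1/q)^{\alpha}$. Both have absolute value at most $1$, so $f_{p_1\cdots p_k}(n)=\sum_{d\mid R}c_d\mathbf{1}_{d\mid n}$ with $|c_d|\le 1$. Then $\lfloor x/d\rfloor=x/d+O(1)$ gives an error of at most $2^{s}\le 2^k$ per tuple, with main term $x\prod_i(a_{q_i}+b_{q_i}/q_i)=x\,G(r)$.

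The step you call the main obstacle is already closed by your own matching-core count $\binom{k}{2s}C_{2s}s^{k-2s}$.

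\emph{Even $k$.} Take $s=k/2-j$ and use $C_k=(k-1)(k-3)\cdots(k-2j+1)\,C_{k-2j}$, where every factor is at least $2s$. The count is then at most $\frac{k^{2j}}{(2j)!}\cdot\frac{C_k}{(2s)^j}\cdot s^{2j}\le C_k\,\frac{(k^3/4)^j}{(2j)!}$.

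\emph{Odd $k$.} Take $s=(k-1)/2-j$ and use $C_{k-1}=(k-2)\cdots(k-2j)\,C_{k-1-2j}$ together with $C_{k-1}\asymp k^{-1/2}C_k$. The same computation gives $\ll k^{3/2}C_k\,\frac{(k^3/4)^j}{(2j+1)!}$.

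Each configuration with $s$ blocks contributes at most $L^s$ in absolute value, where $L=\sum_{p\le z}\frac1p(1-\frac1p)$, since $|G(q^{\alpha})|\le\frac1q(1-\frac1q)$. Summing over $j\ge1$ therefore costs a relative factor $O(k^3/\log\log z)$ under $k^3\le\log\log z$, with no small constant needed. This is equivalent bookkeeping to the paper's bound $\sum_{\alpha}1/(\alpha_1!\cdots\alpha_s!)\le 2^{-s}\binom{k-s-1}{s-1}$.
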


Concerning (b) of Proposition \ref{GS-prop-2}, we shall show the following lemma which is used in the proof of Theorem \ref{Hatsumi-1}.
\begin{lemma}[{cf. (b) of Proposition \ref{GS-prop-2}}]\label{Natsuki-1}
Let $k\geq 3$ be any odd integer, and $x$, $z\geq 1$ be sufficiently large.
Under the restriction $k\leq (\log\log z)^{1/3}$, we have, uniformly in $k$, $z$, and $x$ 
\begin{align*}
\sum_{n\leq x}\left(\sum_{p\leq z}f_{p}(n)\right)^{k}=\tilde{C}_{k}x(\log\log z)^{\frac{k-1}{2}} \left(1+O\left(\frac{k^{3}}{\log\log z}\right)\right) 
                                                      +O\left(2^{k}\pi(z)^{k}\right),
\end{align*}
where $\tilde{C}_{k}$ is the constant defined in (\ref{teigi-C-j-tilde}).
\end{lemma}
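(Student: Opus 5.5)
We follow the Granville--Soundararajan argument behind Proposition \ref{GS-prop-2}, but in the odd case we extract the next term exactly instead of only bounding it. Expanding the $k$th power gives
\begin{align*}
\sum_{n\leq x}\Bigl(\sum_{p\leq z}f_{p}(n)\Bigr)^{k}=\sum_{p_{1},\ldots,p_{k}\leq z}\sum_{n\leq x}f_{p_{1}\cdots p_{k}}(n).
\end{align*}
For a fixed tuple, let $r=p_{1}\cdots p_{k}$ with distinct prime set $\{q_{1},\dots,q_{s}\}$ and multiplicities $\alpha_{i}$. The function $f_{p_{1}\cdots p_{k}}(n)$ depends only on $\gcd(n,\mathrm{rad}(r))$, so it is periodic mod $\mathrm{rad}(r)\le z^{k}$. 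Hence $\sum_{n\le x}f_{p_1\cdots p_k}(n)=x\prod_{i}E(q_{i},\alpha_{i})+O(2^{s})$, where
\begin{align*}
E(q,\alpha)=\frac1q\Bigl(1-\frac1q\Bigr)^{\alpha}+\Bigl(1-\frac1q\Bigr)\Bigl(-\frac1q\Bigr)^{\alpha}.
\end{align*}
Summing the error over tuples gives the term $O(2^{k}\pi(z)^{k})$. We have $E(q,1)=0$, $E(q,2)=\frac1q(1-\frac1q)$, $E(q,3)=\frac1q(1-\frac1q)(1-\frac2q)$, and in general $E(q,\alpha)=\frac1q+O(q^{-2})$. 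So only tuples in which every prime occurs at least twice contribute, and the main term is $x\sum_{\text{patterns}}\prod_{i}\sum_{q}E(q,\alpha_i)$, with the $q_i$ distinct.

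Next we classify the set partitions of $\{1,\dots,k\}$ into blocks of size $\ge2$. For even $k$ the dominant partitions are perfect matchings. There are $k!/(2^{k/2}(k/2)!)=C_k$ of them, and each contributes $(\sum_{q\le z}E(q,2))^{k/2}\approx(\log\log z)^{k/2}$. For odd $k$ a perfect matching is impossible, and the maximal number of blocks $(k-1)/2$ is attained exactly by one block of size $3$ together with $(k-3)/2$ pairs. The number of such partitions is $\binom{k}{3}\cdot (k-3)!/(2^{(k-3)/2}((k-3)/2)!)$. One checks this equals $\tilde C_k$ via (\ref{teigi-C-j-tilde}); for example $k=3$ gives $1$. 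Each such partition contributes
\begin{align*}
\Bigl(\sum_{q\le z}E(q,3)\Bigr)\Bigl(\sum_{q\le z}E(q,2)\Bigr)^{\frac{k-3}{2}}\quad\text{(minus diagonal terms)}.
\end{align*}
We use $\sum_{q\le z}E(q,\alpha)=\log\log z+B+O(1)$-type estimates, and the distinctness constraint costs only a factor $1+O(k^2/\log\log z)$ relative to the free sum, since coincidences force sums over $1/q^{2}$. This gives $\tilde C_k(\log\log z)^{(k-1)/2}(1+O(k^{2}/\log\log z))$, where the $B$- and $O(1)$-shifts are absorbed at relative size $k/\log\log z$.

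The remaining partitions have at most $(k-3)/2$ blocks. Their total must be shown to be $\ll \tilde C_k(\log\log z)^{(k-1)/2}\cdot k^3/\log\log z$. Following Granville--Soundararajan, we bound the number of partitions of $k$ into $m$ blocks of size $\ge 2$ (each block contributing $\ll \log\log z$) by a counting argument. Going from $m+1$ blocks down to $m$ blocks loses a factor $\log\log z$ and gains at most a factor $\ll k^{3}$ in the count, for instance by merging or enlarging blocks: choosing which element to move and where gives $k^{2}$ choices, times the ratio $C_{j-1}/C_j\asymp j^{-1/2}$ bookkeeping. Summing the resulting geometric series, which converges since $k^{3}\le\log\log z$, yields the claimed error. \textbf{The main obstacle} is this combinatorial step: making the ratio of the number of partitions with $m$ blocks to the number with $(k-1)/2$ blocks precise with the right power $k^{3}$, uniformly in the range $k\le(\log\log z)^{1/3}$. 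We would first try the explicit count
\begin{align*}
\#\{\text{partitions with block sizes } \alpha_1,\dots,\alpha_m\}=\frac{k!}{\prod_i\alpha_i!\,\prod_j m_j!},
\end{align*}
where $m_j$ is the number of blocks of size $j$, and compare it to $\tilde C_k$ directly.
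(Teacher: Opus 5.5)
The gap is the step you flag yourself: the partitions with $s\le\frac{k-3}{2}$ blocks are never actually bounded, and this is exactly where the factor $k^{3}$ in the lemma comes from. Your heuristic does not supply it.

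First, ``$k^{2}$ choices times $C_{j-1}/C_{j}\asymp j^{-1/2}$'' gives at most $k^{2}$, not the $k^{3}$ you assert. The real mechanism is that one pair disappears and its two elements are redistributed, which is about $k\cdot k^{2}$ choices.

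Second, a per-level factor $\ll k^{3}$ with an unspecified constant does not sum to $O(k^{3}/\log\log z)$ uniformly for $k\le(\log\log z)^{1/3}$. Near the top of that range the ratio $ck^{3}/\log\log z$ need not be below $1$, and even at ratio exactly $1$ you lose a factor $\asymp k$.

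The paper closes this crudely but uniformly. It uses $0\le G(q^{\alpha})\le 1/q$, $1/\alpha_{i}!\le 1/2$, the count $\binom{k-s-1}{s-1}$ of compositions of $k$ into $s$ parts $\ge 2$, and $\sum_{q_{1}<\cdots<q_{s}}(q_{1}\cdots q_{s})^{-1}\le(\sum_{p\le z}1/p)^{s}/s!$. This gives
\begin{align*}
\sum_{s\le\frac{k-3}{2}}I_{s}(k)\ll\sum_{s\le\frac{k-3}{2}}\frac{k!}{2^{s}s!}\binom{k-s-1}{s-1}(\log\log z)^{s}.
\end{align*}
Put $s=\frac{k-3}{2}-j$, so that $k-2s=2j+3$. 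Then the $s$-th term divided by $\tilde{C}_{k}(\log\log z)^{\frac{k-1}{2}}$ is at most $\frac{6}{(2j+3)!}\bigl(\frac{k^{3}}{\log\log z}\bigr)^{j+1}$. The factorial $(k-2s)!$ inside the binomial coefficient is what makes the series converge uniformly for $k^{3}\le\log\log z$, and the total is $\ll\tilde{C}_{k}k^{3}(\log\log z)^{\frac{k-3}{2}}$.

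Your own idea can also be made rigorous, but only with an explicit constant. Carve a new pair out of an $m$-block partition: take two elements of a block of size $\ge 4$, or else the largest elements of two triples. This map is at most $(m+1)m^{2}<k^{3}/8$-to-one, so your geometric series then has ratio at most about $1/8$.

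The rest of your proposal is correct, and your treatment of the top layer differs usefully from the paper's. Your count $\binom{k}{3}\frac{(k-3)!}{2^{(k-3)/2}((k-3)/2)!}=\frac{k!}{3!\,2^{(k-3)/2}((k-3)/2)!}$ equals $\tilde{C}_{k}$. Removing distinctness by subtracting coincident pairs is harmless and needs no sign information on $G(q^{3})$. The paper instead sandwiches $I_{\frac{k-1}{2}}(k)$ between $\tilde{C}_{k}(\sum_{p\le z}G(p^{2}))^{\frac{k-1}{2}}$ and $\tilde{C}_{k}(\sum_{\pi_{3+\frac{k-3}{2}}\le p\le z}G(p^{3}))^{\frac{k-1}{2}}$. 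That is where it needs $G(q^{3})\ge 0$ and the monotonicity of $G(p^{3})$ for $p\ge 5$. Your route would correspondingly make (H4)--(H6) unnecessary in Section 4, while keeping (H7).

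One small correction: the per-tuple error $O(2^{s})$ does not follow from periodicity modulo $\mathrm{rad}(r)$, which alone only gives $O(\mathrm{rad}(r))$. It comes from the M\"obius count $\#\{n\le x:(n,R)=d\}=\frac{x}{R}\varphi(R/d)+O(2^{\omega(R/d)})$ together with $|f_{p_{1}\cdots p_{k}}(d)|\le\prod_{q_{i}\nmid d}q_{i}^{-1}$.
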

To prove this lemma we shall recall the function $G(r)$ which is introduced by \cite[p.~17]{GS} and some properties of $G(r)$ (Lemma \ref{G(r)-lemma}).
For any positive integer $r\geq 2$, we shall express $r=p_{1}\cdots p_{l}$ the product of all prime divisors of $r$ (so, $l$ is the total number of prime factors of $r$), 
and we put $R:=\prod_{p|r}p$, the product of the distinct prime divisors of $r$, the so-called  kernel of $r$. Define $G(r)$ by
\begin{align}\label{916-def}
G(r):=\frac{1}{R}\sum_{d|R}f_{p_{1}\cdots p_{l}}(d)\varphi\left(\frac{R}{d}\right),
\end{align} 
where $f_{p_{1}\cdots p_{l}}(\cdot)$ is the function defined by (\ref{teigi-f-1}) and (\ref{teigi-f-2}), and $\varphi(\cdot)$ is the Euler function, as usual.
We shall collect some properties on $G(r)$ from \cite{GS}.

\begin{lemma}[{\cite[p.~18, 19]{GS}}]\label{G(r)-lemma}
Let $r$ and $r^{\prime}$ be positive integers $\geq 2$. Then we have the following properties of $G(r)$.
\begin{enumerate}
\item[\rm (i)] If $(r,r^{\prime})=1$, then $G\left(rr^{\prime}\right)=G(r)G\left(r^{\prime}\right)$.
\item[\rm (ii)] For the prime factorization of $r$, if $r=\prod_{q^{\alpha}|| r}q^{\alpha}$, then
\begin{align*}
G(r)=\prod_{q^{\alpha} ||r} \left(\frac{1}{q}\left(1-\frac{1}{q}\right)^{\alpha}+\left(-\frac{1}{q}\right)^{\alpha} \left(1-\frac{1}{q}\right)\right).
\end{align*}
\item[\rm (iii)] If $r$ is not square-full (that is, the exponent of some prime is one), then $G(r)=0$.
\item[\rm (iv)] For any prime $q$ and any integer $\alpha \geq 2$, we have
\begin{align*}
0\leq G\left(q^{\alpha}\right) \leq \frac{1}{q}\left(1-\frac{1}{q}\right).
\end{align*}
\end{enumerate}
\end{lemma}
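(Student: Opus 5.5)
The plan is to prove (ii) first, directly from the definition (\ref{916-def}), and then read off (i), (iii) and (iv) from the resulting product formula. The point is that everything in (\ref{916-def}) factors over the distinct primes $q\mid R$.

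\emph{Factorization.} Write $r=\prod_{q^{\alpha}\|r}q^{\alpha}$. In the list $p_{1},\ldots,p_{l}$ each prime $q\mid R$ occurs exactly $\alpha$ times, so $f_{p_{1}\cdots p_{l}}(d)=\prod_{q^{\alpha}\|r}f_{q}(d)^{\alpha}$. A divisor $d\mid R$ is squarefree, so it corresponds uniquely to a choice $d_{q}\in\{1,q\}$ for each $q\mid R$, with $d=\prod_{q} d_{q}$. Moreover $f_{q}(d)$ depends only on whether $q\mid d$, that is, only on $d_{q}$. Since $\varphi$ is multiplicative and $R/d=\prod_{q} q/d_{q}$ with coprime factors, we get $\varphi(R/d)=\prod_{q}\varphi(q/d_{q})$. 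Also $1/R=\prod_{q}1/q$. Expanding the sum over $d$ as a product of independent sums over the $d_{q}$ gives
\begin{align*}
G(r)=\prod_{q^{\alpha}\| r}\frac{1}{q}\sum_{d_{q}\mid q} f_{q}(d_{q})^{\alpha}\,\varphi\!\left(\frac{q}{d_{q}}\right).
\end{align*}

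\emph{Local factor, giving (ii) and (i).} In each local factor, the term $d_{q}=q$ contributes $(1-1/q)^{\alpha}\varphi(1)$. The term $d_{q}=1$ contributes $(-1/q)^{\alpha}(q-1)$. Dividing by $q$ gives exactly the factor in (ii). Property (i) is then immediate: when $(r,r')=1$ the prime sets of $r$ and $r'$ are disjoint, so the product for $rr'$ splits as the product for $r$ times the product for $r'$.

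\emph{(iii).} If some prime $q$ has $\alpha=1$, its local factor is
\begin{align*}
\frac{1}{q}\left(1-\frac{1}{q}\right)-\frac{1}{q}\left(1-\frac{1}{q}\right)=0,
\end{align*}
so the whole product vanishes.

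\emph{(iv).} This is the only step needing an inequality, and it is elementary. Put $t=1/q\le 1/2$. The local factor equals
\begin{align*}
t(1-t)\left[(1-t)^{\alpha-1}+(-1)^{\alpha}t^{\alpha-1}\right],
\end{align*}
so it suffices to show the bracket lies in $[0,1]$ for $\alpha\ge 2$.
\begin{itemize}
\item If $\alpha$ is even, the bracket is $(1-t)^{\alpha-1}+t^{\alpha-1}$. It is positive, and it is at most $\bigl((1-t)+t\bigr)^{\alpha-1}=1$, because $a^{m}+b^{m}\le(a+b)^{m}$ for $a,b\ge 0$ and $m\ge 1$.
\item If $\alpha$ is odd, the bracket is $(1-t)^{\alpha-1}-t^{\alpha-1}$. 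It is nonnegative because $1-t\ge t$, and it is at most $(1-t)^{\alpha-1}\le 1$.
\end{itemize}
This proves $0\le G(q^{\alpha})\le \frac1q\left(1-\frac1q\right)$.

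The only real care needed is the bookkeeping in the factorization step: the multiplicity $\alpha$ enters through the repeated $f_{q}$ factors, while $R$ and $d$ stay squarefree.
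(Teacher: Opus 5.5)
Your proof is correct. Part (iv) is argued essentially as in the paper: use the explicit formula from (ii), then in the odd case use $1-\tfrac1q\ge\tfrac12\ge\tfrac1q$; your inequality $a^m+b^m\le(a+b)^m$ justifies the even-case upper bound, which the paper only asserts. For (i)--(iii), whose proofs the paper omits as easy, your factorization of the defining sum over the squarefree divisors $d\mid R$ into local factors is the standard argument; you obtain (ii) directly and read off (i) and (iii) from it.
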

\begin{proof}
Since these properties are shown easily, the proofs of them are omitted in \cite{GS}.
However, here we shall prove (iv). In fact, it is important in the proof of Lemma \ref{Natsuki-1}.

By (ii), for any prime $q$ and any integer $\alpha \geq 1$ we have
\begin{align*}
G\left(q^{\alpha}\right)=\frac{1}{q}\left(1-\frac{1}{q}\right)^{\alpha} +\left(-\frac{1}{q}\right)^{\alpha} \left(1-\frac{1}{q}\right).
\end{align*} 
Here, if $\alpha \geq 2$ is even, then $G\left(q^{\alpha}\right)\geq 0$ is trivial and 
\begin{align*}
G\left(q^{\alpha}\right)=\frac{1}{q}\left(1-\frac{1}{q}\right)
                          \left(\left(1-\frac{1}{q}\right)^{\alpha -1}+\left(\frac{1}{q}\right)^{\alpha -1}\right) \leq \frac{1}{q}\left(1-\frac{1}{q}\right).
\end{align*}
On the other hand, if $\alpha\geq 3$ is odd, then trivially
\begin{align*}
G\left(q^{\alpha} \right)= \frac{1}{q}\left(1-\frac{1}{q}\right)^{\alpha} -\left(\frac{1}{q}\right)^{\alpha}\left(1-\frac{1}{q}\right) \leq \frac{1}{q}\left(1-\frac{1}{q}\right),
\end{align*}
and by noting that
\begin{align}\label{zenshin-H4}
1-\frac{1}{q} \geq \frac{1}{2} \geq \frac{1}{q},
\end{align}
we find the positivity
\begin{align*}
G\left(q^{\alpha}\right) =\frac{1}{q}\left(1-\frac{1}{q}\right) \left(\left(1-\frac{1}{q}\right)^{\alpha -1} -\left(\frac{1}{q}\right)^{\alpha -1}\right)\geq 0.
\end{align*}
\end{proof}

\begin{remark}\label{RRRemark} By (iv) of Lemma \ref{G(r)-lemma}, we obtain the positivity $G\left(q^{3}\right)\geq 0$, 
particularly, which is an important point in the proof of Lemma \ref{Natsuki-1}, and is required for the proof of (b) of Theorem \ref{Hatsumi-1}. 
In the proof of (a) of Theorem \ref{Hatsumi-1} ($k$: even), the positivity of $G\left(q^{3}\right)\geq 0$ is not asked. See Section 3.
In the last section of this notes (Section 4), we will discuss a framework of (b) of Theorem \ref{Hatsumi-1}. In which, we need an assumption corresponding to (\ref{zenshin-H4}).
See the hypothesis (H4) and (v) of Lemma \ref{lemma-III}, in Section 4.
\end{remark}

Following the arguments by Granville and Soundararajan in \cite[p.~18--20]{GS} we shall now prove Lemma \ref{Natsuki-1}.

%
%
%
\begin{proof}[Proof of Lemma \ref{Natsuki-1}]
First of all, let $k\geq 1$ be any integer. Using the notation (\ref{teigi-f-2}), we observe that
\begin{align}\label{hoshi-3}
\sum_{n\leq x}\left(\sum_{p\leq z}f_{p}(n)\right)^{k} =\sum_{p_{1},\ldots, p_{k}\leq z}\sum_{n\leq x}f_{p_{1}\cdots p_{k}}(n).
\end{align}
For each $k$-tuple $\{p_{1},\ldots, p_{k}\}$ of primes in $\sum_{p_{1},\ldots, p_{k}\leq z}$, we put $r=p_{1}\cdots p_{k}$ and 
write $r=\prod_{i=1}^{s}q_{i}^{\alpha_{i}}$ which is the prime factorization of $r$ (so, $s=\omega(r)$, $q_{1},\ldots, q_{s}$ are distinct primes, $\alpha_{i}$'s are
positive integers), further we put $R:=\prod_{i=1}^{s}q_{i}$ (the kernel of $r$). We note that if $d=(n,R)$ is the greatest common divisors of $n$ and $R$, then
\begin{align*}
f_{p_{1}\cdots p_{k}}(n)=f_{p_{1}\cdots p_{k}}(d).
\end{align*}  
Using them we see that
\begin{align}
(\textit{RHS of } (\ref{hoshi-3})) &= \sum_{p_{1},\ldots, p_{k}\leq z}\sum_{d|R}\sum_{\begin{subarray}{c}n\leq x\\ (n,R)=d\end{subarray}} f_{p_{1}\cdots p_{k}}(n)\nonumber \\
                                   &=\sum_{p_{1},\ldots, p_{k}\leq z}\sum_{d|R}f_{p_{1}\cdots p_{k}}(d)\sum_{\begin{subarray}{c}n\leq x\\ (n,R)=d\end{subarray}}1. 
\label{uru-sai-tonari}
\end{align}
Here, using the M{\"o}bius function $\mu(\cdot)$ we have
\begin{align*}
&\sum_{\begin{subarray}{c}n\leq x\\ (n,R)=d\end{subarray}}1=\sum_{\delta |\frac{R}{d}}\mu(\delta) \left[\frac{x}{\delta d}\right]
                                                             =\sum_{\delta |\frac{R}{d}}\mu(\delta) \left(\frac{x}{\delta d} +O(1)\right)\\
&=\frac{x}{d}\sum_{\delta |\frac{R}{d}}\frac{\mu(\delta)}{\delta} \frac{R/d}{R/d} +O\left(\sum_{\delta |\frac{R}{d}} |\mu(\delta)|\right) 
=\frac{x}{R} \varphi\left(\frac{R}{d}\right) +O\left(\sum_{\delta |\frac{R}{d}} |\mu(\delta)| \right), 
\end{align*}
and substitute this on the right-hand side of (\ref{uru-sai-tonari}), then
\begin{align*}
&\sum_{n\leq x}\left(\sum_{p\leq z}f_{p}(n)\right)^{k}\\
& =x\sum_{p_{1},\ldots, p_{k}\leq z}G(r)
+O\left(\sum_{p_{1},\ldots, p_{k}\leq z} \left(\sum_{d|R}|f_{p_{1}\cdots p_{k}}(d)|\sum_{\delta |\frac{R}{d}}|\mu(\delta)|\right)\right),
\end{align*}
here, $G(r)$ is the function defined by (\ref{916-def}). The above $O$-term is evaluated as $O\left(2^{k}\pi(z)^{k}\right)$. 
In fact, putting $l=\omega (d)$ in the $O$-term we observe that
\begin{align*}
&\sum_{d|R}|f_{p_{1}\cdots p_{k}}(d)|\sum_{\delta|\frac{R}{d}}|\mu(\delta)|\\
&\leq \sum_{d|R}\prod_{\begin{subarray}{c}p_{i}\\ p_{i}\nmid d\end{subarray}} \frac{1}{p_{i}}\cdot 2^{\omega \left(\frac{R}{d}\right)}
\leq \sum_{d|R}\left(\frac{1}{2}\right)^{\omega (R)-l}2^{\omega(R)-l} 
\leq 2^{k},
\end{align*}
then
\begin{align*}
\sum_{p_{1},\ldots, p_{k}\leq z} \left(\sum_{d|R}|f_{p_{1}\cdots p_{k}}(d)|\sum_{\delta |\frac{R}{d}}|\mu(\delta)|\right)
\leq \sum_{p_{1},\ldots, p_{k}\leq z}2^{k} \leq 2^{k}\pi(z)^{k}.
\end{align*}
On the other hand, we shall use (iii) of Lemma \ref{G(r)-lemma} to $\sum_{p_{1},\ldots, p_{k}\leq z}G(r)$. 
Finally, we get
\begin{align}\label{hoshi-6}
\sum_{n\leq x} \left(\sum_{p\leq z}f_{p}(n)\right)^{k}
=x\left(\sum_{\begin{subarray}{c}p_{1}, \ldots, p_{k}\leq z\\ p_{1}\cdots p_{k}\,:\, \textit{square-full}\end{subarray}}G(p_{1}\cdots p_{k})\right)
 +O\left(2^{k}\pi(z)^{k}\right).
\end{align}

Next, let $k\geq 2$, and we shall divide the  sum of $G(p_{1}\cdots p_{k})$ on the right-hand side of (\ref{hoshi-6}):
\begin{align}
&\sum_{\begin{subarray}{c}p_{1}, \ldots, p_{k}\leq z\\ p_{1}\cdots p_{k}\,:\, \textit{square-full}\end{subarray}}G(p_{1}\cdots p_{k})\nonumber \\
&=\sum_{1\leq s\leq \frac{k}{2}}\sum_{q_{1}<\cdots <q_{s} \leq z}
 \sum_{\begin{subarray}{c}\alpha_{1}+\cdots +\alpha_{s}=k\\
                          \alpha_{1}\geq 2, \ldots, \alpha_{s}\geq 2\end{subarray}}
         \frac{k!}{\alpha_{1}!\cdots \alpha_{s}!} G\left(q_{1}^{\alpha_{1}}\cdots q_{s}^{\alpha_{s}}\right) \nonumber \\
&=
\begin{cases}
{\displaystyle I_{\frac{k}{2}}(k) +\sum_{s\leq \frac{k}{2}-1}I_{s}(k)}     & (k\geq 2: even)\\
{\displaystyle I_{\frac{k-1}{2}}(k) +\sum_{s\leq \frac{k-1}{2}-1}I_{s}(k)} & (k\geq 3: odd)
\end{cases}, \label{I-wa}
\end{align} 
say.

Henceforth, we shall investigate the right-hand side of (\ref{I-wa}) for odd integers $k\geq 3$. 
\bigskip

\noindent{\bf (i) The sum $I_{\frac{k-1}{2}}(k)$.}
Let $k\geq 3$ be any integer.
As for $I_{\frac{k-1}{2}}(k)$, we remark that there are just $\frac{k-1}{2}$ ways of $\frac{k-1}{2}$-tuple $\{\alpha_{1}, \ldots, \alpha_{\frac{k-1}{2}}\}$ of integers
satisfying $\alpha_{1}+\cdots +\alpha_{\frac{k-1}{2}}=k$ and $\alpha_{i}\geq 2$ ($i=1,\ldots, \frac{k-1}{2}$):
\begin{align*}
\begin{cases}
& \alpha_{1}=3, \alpha_{2}=2, \ldots, \alpha_{\frac{k-1}{2}}=2,\\
& \quad \cdots\\
& \alpha_{1}=2, \ldots, \alpha_{\frac{k-1}{2}-1}=2, \alpha_{\frac{k-1}{2}}=3.
\end{cases}
\end{align*}
(If $k=3$, then $\alpha_{1}=3$, only.)
Hence, we see that
\begin{align}\label{a-hoshi-da}
I_{\frac{k-1}{2}}(k)=\frac{k!}{3\cdot 2^{\frac{k-1}{2}}}\sum_{i=1}^{\frac{k-1}{2}}\sum_{q_{1}<\cdots <q_{\frac{k-1}{2}}\leq z}
                    G\left(q_{i}^{3}\right) \prod_{\begin{subarray}{c}j=1\\ j\neq i\end{subarray}}^{\frac{k-1}{2}} G\left(q_{j}^{2}\right),
\end{align}
here, (ii) of Lemma \ref{G(r)-lemma}:
\begin{align*}
 G\left(q_{i}^{3}\right)=\frac{1}{q_{i}}\left(1-\frac{1}{q_{i}}\right)\left(1-\frac{2}{q_{i}}\right) \geq 0,
\quad G\left(q_{j}^{2}\right) =\frac{1}{q_{j}}\left(1-\frac{1}{q_{j}}\right) >0.
\end{align*}
Noting that $G\left(p^{2}\right)> G\left(p^{3}\right)\geq 0$ (for any $p$) we have
\begin{align}\label{hachi-odd}
I_{\frac{k-1}{2}}(k)
\begin{cases}
{\displaystyle < \tilde{C}_{k}\sum_{\begin{subarray}{c}q_{1},\ldots, q_{\frac{k-1}{2}} \leq z\\ \textit{distinct}\end{subarray}} \prod_{j=1}^{\frac{k-1}{2}}G(q_{j}^{2}) } & \\
{\displaystyle > \tilde{C}_{k}\sum_{\begin{subarray}{c}q_{1},\ldots, q_{\frac{k-1}{2}} \leq z\\ \textit{distinct}\end{subarray}} \prod_{j=1}^{\frac{k-1}{2}}G(q_{j}^{3}) }&
\end{cases},
\end{align}
where $\tilde{C}_{k}$ is the constant defined in (\ref{teigi-C-j-tilde}). By use of $\sum_{p\leq z}\frac{1}{p}=\log\log z +O(1)$, we have
\begin{align}\label{hachi-odd-1}
I_{\frac{k-1}{2}}(k) &< \tilde{C}_{k}\left(\sum_{p\leq z}\frac{1}{p}\left(1-\frac{1}{p}\right)\right)^{\frac{k-1}{2}}\nonumber \\
                  &= \tilde{C}_{k}(\log\log z)^{\frac{k-1}{2}} \left(1+O\left(\frac{1}{\log\log z}\right)\right)^{\frac{k-1}{2}}.
\end{align}
To find a lower bound of $I_{\frac{k-1}{2}}(k)$ we note that $G\left(p^{3}\right)$ is monotonic decreasing for $p\geq 5$. 
First, we shall restrict $q_{i}\geq 5$ in the latter of (\ref{hachi-odd}), that is,
\begin{align}
I_{\frac{k-1}{2}}(k) &> \tilde{C}_{k} \sum_{\begin{subarray}{c} 5\leq q_{1},\ldots, q_{\frac{k-1}{2}}\leq z \\ \textit{distinct} \end{subarray}}
                       \prod_{j=1}^{\frac{k-1}{2}}G\left(q_{j}^{3}\right)\nonumber \\
                   &= \tilde{C}_{k} \sum_{\begin{subarray}{c} 5\leq q_{1},\ldots, q_{\frac{k-1}{2}-1}\leq z \\ \textit{distinct} \end{subarray}}
                      \prod_{j=1}^{\frac{k-1}{2}-1} G\left(q_{j}^{3}\right)\sum_{\begin{subarray}{c}5\leq p \leq z \\ p\neq q_{i}\, (i=1,\ldots, \frac{k-1}{2}-1)\end{subarray}}
                      G\left(p^{3}\right). \label{hachi-odd-2}
\end{align}
Let $\pi_{l}$ be the $l$th smallest prime. Since $G\left(p^{3}\right)$ is monotonic decreasing for $p\geq 5=\pi_{3}$, we find that
\begin{align*}
\sum_{\begin{subarray}{c}5\leq p \leq z \\ p\neq q_{i}\, (i=1,\ldots, \frac{k-1}{2}-1)\end{subarray}}
                      G\left(p^{3}\right) 
\geq \sum_{\pi_{3 +\frac{k-3}{2}}\leq t \leq z} G\left(t^{3}\right),
\end{align*}
where $t$ denotes a prime. Repeating this manner on the right-hand side of (\ref{hachi-odd-2}), we obtain
\begin{align*}
I_{\frac{k-1}{2}}(k) &> \tilde{C}_{k} \left(\sum_{\begin{subarray}{c}\pi_{3+\frac{k-3}{2}} \leq p \leq z\end{subarray}}G\left(p^{3}\right)\right)^{\frac{k-1}{2}}\\
                  &\geq \tilde{C}_{k}\left(\sum_{p\leq z}\frac{1}{p}-\sum_{p\leq \pi_{3+\frac{k-3}{2}}}1 +O(1)\right)^{\frac{k-1}{2}} \\
                  &=\tilde{C}_{k} (\log\log z +O(k))^{\frac{k-1}{2}}\\
                  &=\tilde{C}_{k} (\log\log z)^{\frac{k-1}{2}} \left(1+O\left(\frac{k}{\log\log z}\right)\right)^{\frac{k-1}{2}}.
\end{align*}
At this phase, we shall use the assumption $k\leq (\log\log z)^{1/3}$ for (\ref{hachi-odd-1}) and the above. Then we reach
\begin{align}\label{natsuki-2}
I_{\frac{k-1}{2}}(k)=\tilde{C}_{k}(\log\log z)^{\frac{k-1}{2}} +O\left(\tilde{C}_{k}(\log\log z)^{\frac{k-3}{2}}k^{2}\right).
\end{align}
(For $k=3$ we evaluate (\ref{a-hoshi-da}), directly.)\\

\noindent{\bf (ii) The sum $\sum_{s\leq \frac{k-3}{2}}I_{s}(k)$.}
In the case $k=3$, the sum $\sum_{s\leq \frac{k-3}{2}}I_{s}(k)$ in (\ref{I-wa}) is empty. So, let $k\geq 5$ be any odd integer $\leq (\log\log z)^{1/3}$, 
and we evaluate it. By (i) and (iv) of Lemma \ref{G(r)-lemma}, 
we see that $0\leq G(q_{1}^{\alpha_{1}}\cdots q_{s}^{\alpha_{s}})\leq \frac{1}{q_{1}\cdots q_{s}}$ in (\ref{I-wa}). Then, we have
\begin{align}
\sum_{s\leq \frac{k-3}{2}}I_{s}(k)& \leq \sum_{s\leq \frac{k-3}{2}}k!\sum_{q_{1}<\cdots <q_{s}\leq z}\frac{1}{q_{1}\cdots q_{s}} \sum_{\begin{subarray}{c}\alpha_{1}+\cdots +\alpha_{s}=k\\ \alpha_{1}\geq 2, \ldots, \alpha_{s}\geq 2\end{subarray}}\frac{1}{\alpha_{1}!\cdots \alpha_{s}!}\nonumber \\
&\leq \sum_{s\leq \frac{k-3}{2}}\frac{k!}{2^{s}s!}\left(\sum_{p\leq z}\frac{1}{p}\right)^{s} 
  \sum_{\begin{subarray}{c}\alpha_{1}+\cdots +\alpha_{s}=k\\ \alpha_{1}\geq 2, \ldots, \alpha_{s}\geq 2\end{subarray}}1\nonumber \\
&=\sum_{s\leq \frac{k-3}{2}}\frac{k!}{2^{s}s!} \left(\sum_{p\leq z}\frac{1}{p}\right)^{s} \binom{k-s-1}{s-1}. \label{atsui-atsui}
\end{align}
Note that $\binom{k-s-1}{s-1}<\binom{k-s}{s}$ (by $2s <k$), moreover, in (\ref{atsui-atsui}) for any sufficiently large $z$ we see that
\begin{align*}
\left(\sum_{p\leq z}\frac{1}{p}\right)^{s} <3 (\log\log z)^{s}.
\end{align*}
Then
\begin{align}
&(\textit{RHS of } (\ref{atsui-atsui}))\nonumber \\ 
&< 3C_{k}(\log\log z)^{\frac{k-3}{2}} \sum_{s\leq \frac{k-3}{2}} \frac{\Gamma \left(\frac{k}{2}+1\right)2^{\frac{k}{2}-s}}{s!} 
                                                      \frac{(k-s)!}{s!(k-2s)!}\frac{1}{(\log\log z)^{\frac{k-3}{2}-s}}\nonumber \\
&=6\sqrt{2}C_{k}(\log\log z)^{\frac{k-3}{2}}
  \sum_{j=0}^{\frac{k-5}{2}} \frac{\Gamma \left(\frac{k}{2}+1\right)2^{j}}{\left(\frac{k-3}{2}-j\right)!} \frac{\left(\frac{k+3}{2}+j\right)!}{\left(\frac{k-3}{2}-j\right)!} \frac{1}{(2j+3)!}\frac{1}{(\log \log z)^{j}}. \label{atsui-atsui-2}
\end{align}
Recall \cite[p.~58]{Tit} to get
\begin{align}\label{dango-1}
\Gamma \left(\frac{k}{2}+1\right) \asymp \frac{\Gamma \left(\frac{k}{2}+\frac{3}{2}\right)}{\sqrt{k}}.
\end{align}
Further, we easily see that
\begin{align}\label{dango-2}
\frac{\Gamma \left(\frac{k}{2}+\frac{3}{2}\right)2^{j}}{\left(\frac{k-3}{2}-j\right)!}
=\frac{\left(\frac{k^{2}}{4}-\frac{1}{4}\right)\left(\frac{k-3}{2}\right)! 2^{j}}{\left(\frac{k-3}{2}-j\right)!}
\ll k^{2}\cdot k^{j},
\end{align}
and
\begin{align}\label{dango-3}
\frac{\left(\frac{k+3}{2}+j \right)!}{\left(\frac{k-3}{2}-j \right)!} \ll k^{3} \frac{\left(\frac{k-3}{2}+j\right)!}{\left(\frac{k-3}{2}-j\right)!}\ll k^{3}\cdot k^{2j}.
\end{align}
We shall apply (\ref{dango-1}), (\ref{dango-2}), and (\ref{dango-3}) to the right-hand side of (\ref{atsui-atsui-2}), hence
\begin{align*}
(\textit{RHS of } (\ref{atsui-atsui-2})) & \ll \frac{C_{k}(\log\log z)^{\frac{k-3}{2}}}{\sqrt{k}}\sum_{j=0}^{\frac{k-5}{2}}\frac{\left(k^{2}k^{j}\right)\left(k^{3} k^{2j}\right)}{(2j+3)!}
                                               \frac{1}{(\log\log z)^{j}} \\
&=k^{9/2} C_{k} (\log \log z)^{\frac{k-3}{2}} \sum_{j=0}^{\frac{k-5}{2}} \frac{1}{(2j+3)!} \left(\frac{k^{3}}{\log\log z}\right)^{j}  \\
&\ll k^{9/2} C_{k} (\log\log z)^{\frac{k-3}{2}} \quad (\textit{by } k\leq (\log\log z)^{1/3}) \\
&\asymp \tilde{C}_{k}(\log\log z)^{\frac{k-3}{2}}k^{3} \quad (\textit{by } (\ref{teigi-C-j-tilde})). 
\end{align*}
Therefore, we have
\begin{align}
\sum_{s\leq \frac{k-3}{2}}I_{s}(k) \ll   \tilde{C}_{k}(\log\log z)^{\frac{k-3}{2}}k^{3}. \label{al-12-9-19}
\end{align}
 
\bigskip

At last, collecting (\ref{I-wa}), (\ref{natsuki-2}), and (\ref{al-12-9-19}) we obtain that
\begin{align}\label{13-fri}
\sum_{\begin{subarray}{c}p_{1},\ldots, p_{k}\leq z \\ p_{1}\cdots p_{k}\, :\, \textit{square-full}\end{subarray}}G(p_{1}\cdots p_{k})
=\tilde{C}_{k} (\log\log z)^{\frac{k-1}{2}} \left(1+O\left(\frac{k^{3}}{\log\log z}\right)\right)
\end{align}
and substitute (\ref{13-fri}) in (\ref{hoshi-6}) we reach the assertion of  Lemma \ref{Natsuki-1}. 
\end{proof}

Regarding Lemma \ref{Natsuki-1} we mention some remarks which are used in the proof of Theorem \ref{Hatsumi-1}.
\begin{remark}\label{k=1-lemma-natsuki-1-remark}
Although we see that $\sum_{n\leq x}(\sum_{p\leq z}f_{p}(n))=O\left(\pi(z)\right)$ by choosing $k=1$ on (\ref{hoshi-6}),
we also find it more easily by (\ref{hoshi-3}) with $k=1$. Actually, we observe it as follows.
\begin{align}
\sum_{n\leq x}\left(\sum_{p\leq z}f_{p}(n)\right)&=\sum_{p\leq z}\sum_{n\leq x}f_{p}(n)\nonumber \\
&=\sum_{p\leq z}\left(\sum_{\begin{subarray}{c}n\leq x\\ p|n\end{subarray}}\left(1-\frac{1}{p}\right)+\sum_{n\leq x}\left(-\frac{1}{p}\right) 
          +\sum_{\begin{subarray}{c}n\leq x \\ p|n \end{subarray}}\frac{1}{p}\right)\nonumber \\ 
& =\sum_{p\leq z}\left( \left[\frac{x}{p}\right] -\frac{x}{p} +O\left(\frac{1}{p}\right)\right)\nonumber \\
&= O\left(\pi(z)\right). \label{k=1-lemma-natsuki-1}
\end{align}
\end{remark}
\begin{remark}\label{remark-b-Prop-2-1-bun}
Let $k\geq 3$ be odd integers $\leq (\log\log z)^{1/3}$. From the proof of Lemma \ref{Natsuki-1} ((\ref{hoshi-6}), (\ref{I-wa}), (\ref{hachi-odd-1}), (\ref{al-12-9-19})), 
easily we observe that
\begin{align}
\sum_{n\leq x}\left(\sum_{p\leq z}f_{p}(n)\right)^{k} 
& \ll \tilde{C}_{k} x(\log\log z)^{\frac{k-1}{2}}\left(1+O\left(\frac{k^{3}}{(\log\log z)}\right)\right) +2^{k}\pi(z)^{k} \nonumber \\
&\asymp C_{k}x (\log\log z)^{\frac{k-1}{2}}k^{3/2} +2^{k}\pi(z)^{k} \label{remark-b-Prop-2-1}
\end{align}
which is the assertion (b) of Proposition \ref{GS-prop-2}.
\end{remark}
\begin{remark}\label{yakult-1000-919}
First of all, let $k\geq 2$ be even integers $\leq (\log\log z)^{1/3}$. By (a) of Proposition \ref{GS-prop-2} (or a process similar to Remark \ref{remark-b-Prop-2-1-bun})
we see that
\begin{align}\label{remark-a-Prop-2-1}
\sum_{n\leq}\left(\sum_{p\leq z} f_{p}(n)\right)^{k} \ll C_{k} x(\log\log z)^{k/2} +2^{k}\pi(z)^{k}.
\end{align}
Obviously, this is valid for $k=0$.

Now, let $k\geq 0$ be integers $\leq (\log\log z)^{1/3}$. From (\ref{k=1-lemma-natsuki-1}), (\ref{remark-b-Prop-2-1}), and (\ref{remark-a-Prop-2-1})
we have
\begin{align}\label{Remark-Lemma-2-2-Natsuki-2}
\sum_{n\leq x}\left(\sum_{p\leq z}f_{p}(n)\right)^{k}
\ll
\begin{cases} x        & (k=0),\\
              \pi (z)  & (k=1),\\
              C_{k}x(\log\log z)^{k/2} +2^{k}\pi(z)^{k}& (k\geq 2),
\end{cases}
\end{align}
uniformly in $k$, $z$, and $x$. 
\end{remark}
%
\section{Proof of Theorem \ref{Hatsumi-1}.}
We shall prove Theorem \ref{Hatsumi-1} by use of Lemma \ref{Natsuki-1}, (a) of Proposition \ref{GS-prop-2} and (\ref{Remark-Lemma-2-2-Natsuki-2}).

Let $z\geq e^{e}$ be any real number and $\omega_{z}(n)$ arithmetical function defined by (\ref{teigi-omega-z}). Using Mertens' formula 
\begin{align}\label{MertensFormula-2026310}
\sum_{p\leq z}\frac{1}{p} =\log\log z +B +O\left(\frac{1}{\log z}\right),
\end{align}
(\cite[p.~466]{HW} and (\ref{teigi-teisu-B}))
and the function $f_{p}(n)$ defined by (\ref{teigi-f-1}), we have, as that was written in \cite[p.~18, line 3]{GS}
\begin{align}
\omega_{z}(n)&= \sum_{\begin{subarray}{c}p|n \\ p\leq z\end{subarray}} \left(1-\frac{1}{p}+\frac{1}{p}\right) \nonumber \\
             &=\sum_{\begin{subarray}{c}p|n\\ p\leq z\end{subarray}}\left(1-\frac{1}{p}\right) +\sum_{p\leq z}\frac{1}{p}
                -\sum_{\begin{subarray}{c}p\nmid n \\ p\leq z\end{subarray}}\frac{1}{p}\nonumber \\
             &=\sum_{p\leq z}f_{p}(n) +\log\log z +B +O\left(\frac{1}{\log z}\right). \label{ogai}
\end{align}
From this, for any real $x\geq 1$ and any integer $k\geq 1$ we see that
\begin{align}
\sum_{n\leq x}\left(\omega_{z}(n)-\log\log z\right)^{k}
&=\sum_{n\leq x} \left(\sum_{j=0}^{k-2} \binom{k}{j} \{O(1)\}^{k-j} \left(\sum_{p\leq z}f_{p}(n)\right)^{j}\right)\nonumber \\
&\quad +\left(kB+O\left(\frac{k}{\log z}\right)\right) \sum_{n\leq x} \left(\sum_{p\leq z}f_{p}(n)\right)^{k-1}\nonumber \\
&\quad +\sum_{n\leq x}\left(\sum_{p\leq z} f_{p}(n)\right)^{k}. \label{AL-25}  
\end{align}
From now on, we shall assume that
\begin{align}\label{a-hoshi-futatsu-}
2\leq k \leq (\log\log z)^{1/3},\quad \textit{and}\quad e^{e^{k^3}}\leq z \leq x^{1/k}.
\end{align}
Under the setting, we shall prove (b) of Theorem \ref{Hatsumi-1}, first.

\bigskip

\begin{proof}[Proof of (b)]
Let $k$ be odd and assume (\ref{a-hoshi-futatsu-}). 
On the right-hand side of (\ref{AL-25})
by Lemma \ref{Natsuki-1} we have
\begin{align}
\sum_{n\leq x}\left(\sum_{p\leq z}f_{p}(n)\right)^{k}
=\tilde{C}_{k}x (\log\log z)^{\frac{k-1}{2}} \left(1+O\left(\frac{k^{3}}{\log\log z}\right)\right) +O\left(C_{k}x\right),
\label{AL-26}
\end{align}
and by (a) of Proposition \ref{GS-prop-2} we observe that
\begin{align}
& \left(kB+O\left(\frac{k}{\log z}\right)\right) \sum_{n\leq x} \left(\sum_{p\leq z}f_{p}(n)\right)^{k-1} \nonumber \\
&= Bk C_{k}\frac{C_{k-1}}{C_{k}} x (\log\log z)^{\frac{k-1}{2}} +O\left(k^{4}C_{k}\frac{C_{k-1}}{C_{k}}x (\log\log z)^{\frac{k-3}{2}}\right)\nonumber \\
&\quad +O\left(kC_{k} \frac{C_{k-1}}{C_{k}}x \frac{(\log\log z)^{\frac{k-1}{2}}}{\log z}\right) +O\left(kC_{k} \frac{C_{k-1}}{C_{k}}x\right) \nonumber \\
&= B \left(k^{3/2}C_{k}\right) \left(\frac{C_{k-1}}{k^{1/2}C_{k}}\right) x (\log\log z)^{\frac{k-1}{2}} \nonumber\\
&\quad +O\left((k^{3/2}C_{k})x (\log\log z)^{\frac{k-3}{2}} k^{2}\right)
        +O\left((k^{3/2}C_{k}) x \frac{(\log \log z)^{\frac{k-1}{2}}}{k\log z}\right)\nonumber\\
&\quad +O\left(\left(k^{3/2}C_{k}\right)x k^{-1}\right). \label{AL-27}
\end{align}
Note that in (\ref{AL-25}), for $0\leq j \leq k-2$
\begin{align*}
\sum_{n\leq x}\left(\sum_{p\leq z}f_{p}(n)\right)^{j} \ll C_{j} x(\log\log z)^{\frac{j}{2}},
\end{align*}
by (\ref{Remark-Lemma-2-2-Natsuki-2}) in Remark \ref{yakult-1000-919}. Therefore,
\begin{align}
& \sum_{n\leq x}\sum_{j=0}^{k-2}\binom{k}{j}\left\{O(1)\right\}^{k-j}\left(\sum_{p\leq z}f_{p}(n)\right)^{j} \nonumber \\
&\ll \sum_{j=0}^{k-2} \binom{k}{j}c^{k-j}\left|\sum_{n\leq x}\left(\sum_{p\leq z}f_{p}(n)\right)^{j}\right|\quad (\textit{$c>0$ is a constant})\nonumber \\
&\ll x \sum_{j=0}^{k-2} \binom{k}{j}c^{k-j}C_{j} (\log\log z)^{\frac{j}{2}}\nonumber\\
&=C_{k}x(\log\log z)^{\frac{k-2}{2}}\sum_{j=0}^{k-2} \binom{k}{j}c^{k-j}\frac{C_{j}}{C_{k}}\frac{1}{(\log\log z)^{\frac{k-2}{2}-\frac{j}{2}}} \nonumber \\
&\asymp C_{k}x (\log\log z)^{\frac{k-2}{2}}\sum_{j=0}^{\frac{k-2}{2}} 
                                                \binom{k}{j} c^{k-j}
                  \left(\frac{j}{e}\right)^{\frac{j}{2}}\left(\frac{k}{e}\right)^{-\frac{k}{2}}\frac{1}{(\log\log z)^{\frac{k-2}{2}-\frac{j}{2}}} \nonumber\\
&\leq C_{k} x(\log\log z)^{\frac{k-2}{2}} \sum_{j=0}^{k-2} \binom{k}{j} c^{k-j} \left(\frac{e}{k}\right) ^{\frac{k}{2}-\frac{j}{2}}\left(\frac{k}{e}\right)
                                          \left(\frac{k}{e}\right)^{-1}
                                          \frac{1}{(\log\log z)^{\frac{k-2}{2}-\frac{j}{2}}}\nonumber \\
&= C_{k} x (\log\log  z)^{\frac{k-2}{2}} \sum_{j=0}^{k-2} \binom{k}{j} c^{2}(c^{2})^{\frac{k-2-j}{2}}\left(\frac{e}{k}\right)^{\frac{k-2-j}{2}}
                                         \frac{e/k}{(\log\log z)^{\frac{k-2-j}{2}}}. \label{san-sk-1}
\end{align}
Here, note that
\begin{align*}
\binom{k}{j}=\frac{k(k-1)}{(k-j)(k-j-1)} \binom{k-2}{j} \ll k^{2} \binom{k-2}{j}.
\end{align*}
Then, we see that
\begin{align}
&(\textit{RHS of } (\ref{san-sk-1})) \ll C_{k} x(\log\log z)^{\frac{k-2}{2}} k 
                      \sum_{j=0}^{k-2} \binom{k-2}{j}\left(\frac{c^{2}e/k}{\log\log z}\right)^{\frac{k-2-j}{2}}\nonumber\\
&\ll C_{k} x (\log\log z)^{\frac{k-2}{2}} k \left(1+\frac{(c^{2}e)^{1/2}}{(k\log\log z)^{1/2}}\right)^{k-2} \nonumber \\
&\ll C_{k} x (\log\log z)^{\frac{k-2}{2}} k \quad (\textit{by } k\leq (\log\log z)^{1/3})\nonumber \\
&=\left(k^{3/2}{C}_{k}\right) x (\log\log z)^{\frac{k-2}{2}} \frac{1}{\sqrt{k}}. \label{san-sk-2}
\end{align}
Therefore, by (\ref{AL-25}), (\ref{AL-26}), (\ref{AL-27}), and (\ref{san-sk-2}) we have
\begin{align}
&\sum_{n\leq x}(\omega_{z}(n)-\log\log z)^{k}\nonumber \\
&= O\left((k^{3/2})C_{k} x \frac{(\log\log z)^{\frac{k-2}{2}}}{k^{1/2}}\right)\nonumber\\
&\quad +B(k^{3/2}C_{k}) \left(\frac{C_{k-1}}{k^{1/2}C_{k}}\right) x(\log\log z)^{\frac{k-1}{2}}\nonumber \\
&\quad +O\left( (k^{3/2}C_{k}) x (\log\log z)^{\frac{k-3}{2}}k^{2}\right)\nonumber \\
&\quad +O\left((k^{3/2}C_{k})x\frac{(\log\log z)^{\frac{k-1}{2}}}{k\log z}\right)\nonumber \\
&\quad +\tilde{C}_{k}x (\log\log z)^{\frac{k-1}{2}} +O\left(\tilde{C}_{k} x (\log\log z)^{\frac{k-3}{2}}k^{3}\right)\nonumber\\
&=\tilde{C}_{k}x (\log\log z)^{\frac{k-1}{2}} +B(k^{3/2}C_{k})x (\log\log z)^{\frac{k-1}{2}} \frac{C_{k-1}}{k^{1/2}C_{k}}\nonumber\\
&\quad +O\left((k^{3/2}C_{k})x (\log\log z)^{\frac{k-1}{2}}\max \left\{\frac{1}{(k\log\log z)^{1/2}}, \frac{k^{3}}{\log\log z}\right\}\right). \label{yama-umi-kawa}
\end{align}
Lastly, note that the inequality $1/(k\log\log z)^{1/2} \geq k^{3}/\log\log z$ is equivalent to $(\log\log z)^{1/7} \geq k$,
so the first assertion (I) of (b) is deduced, on the other case $(\log\log z)^{1/7} <k\leq (\log\log z)^{1/3}$, note that the inequality
$k^{3}/\log\log z \geq 1/k\, (\asymp C_{k-1}/(k^{1/2}C_{k}))$ is equivalent to $k\geq (\log\log z)^{1/4}$, then the second assertion (II) and 
the third (III) are follows from (\ref{yama-umi-kawa}). 
\end{proof}

We may obtain the assertion (a), without using Lemma \ref{Natsuki-1}.

\begin{proof}[Proof of (a)]
Let $k\geq 2$ be even and assume (\ref{a-hoshi-futatsu-}). 
In (\ref{AL-25}), we shall apply (a) of Proposition \ref{GS-prop-2}, then
\begin{align*}
\sum_{n\leq x}\left(\sum_{p\leq z}f_{p}(n)\right)^{k} =C_{k}x(\log\log z)^{\frac{k}{2}} \left(1+O\left(\frac{k^{3}}{\log\log z}\right)\right) +O\left(C_{k}x\right).
\end{align*} 
Moreover, by (\ref{remark-b-Prop-2-1}) in Remark \ref{remark-b-Prop-2-1-bun} (here $k-1$ is odd)
\begin{align*}
& \left(kB+O\left(\frac{k}{\log z}\right)\right)\sum_{n\leq x}\left(\sum_{p\leq z}f_{p}(n)\right)^{k-1}\\
&\ll k(C_{k-1}x(\log\log z)^{\frac{k-1}{2}}k^{3/2}+C_{k-1}x) 
\ll C_{k} x (\log\log z)^{\frac{k}{2}}\frac{k^{2}}{\log\log z}
\end{align*}
and by (\ref{Remark-Lemma-2-2-Natsuki-2}) of Remark \ref{yakult-1000-919}
\begin{align*}
& \sum_{n\leq x}\left(\sum_{j=0}^{k-2}\binom{k}{j} \{O(1)\}^{k-j}\right) \left(\sum_{p\leq z}f_{p}(n)\right)^{j}
 \ll C_{k}x(\log\log z)^{\frac{k-2}{2}}k.
\end{align*}
Then we obtain the assertion (a), immediately.
\end{proof}

From Theorem \ref{Hatsumi-1} we have the following corollary.
\begin{corollary}\label{k-jyo-corocoro}
Let $x\geq 1$ and $z\geq 1$ be sufficiently large.
For fixed integer $k\geq 2$ and the restriction $z\leq x^{1/k}$, we have
\begin{align*}
&\sum_{n\leq x}(\omega_{z}(n)-\log\log z)^{k}\\
&=\begin{cases}
{\displaystyle C_{k}x (\log\log z)^{\frac{k}{2}} +O\left(x(\log\log z)^{\frac{k-2}{2}}\right)} & (k\geq 2: \textit{even})\\
{\displaystyle (\tilde{C}_{k} +BkC_{k-1})x (\log\log z)^{\frac{k-1}{2}} +O\left(x(\log\log z)^{\frac{k-3}{2}}\right)} & (k\geq 3: \textit{odd})
\end{cases}.
\end{align*}
\end{corollary}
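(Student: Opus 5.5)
The corollary follows from Theorem \ref{Hatsumi-1} once $k$ is fixed and $z\to\infty$. For fixed $k$, the hypotheses $k\leq(\log\log z)^{1/3}$ and $e^{e^{k^3}}\leq z$ hold for all sufficiently large $z$, and $z\leq x^{1/k}$ is assumed. So Theorem \ref{Hatsumi-1} applies, and every quantity depending only on $k$ (namely $C_k$, $\tilde{C}_k$, $k^{3/2}C_k$, $C_{k-1}$) is $O(1)$.

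For even $k$, I apply (a) of Theorem \ref{Hatsumi-1}. Its error term is
\[
C_kx(\log\log z)^{k/2}\cdot O\bigl(k^3/\log\log z\bigr)=O\bigl(x(\log\log z)^{\frac{k-2}{2}}\bigr),
\]
which is exactly the claimed formula.

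For odd $k\geq3$, I use case (I) of (b), since $k\leq(\log\log z)^{1/7}$ once $z$ is large. The secondary main term simplifies:
\[
B(k^{3/2}C_k)\frac{C_{k-1}}{k^{1/2}C_k}=BkC_{k-1},
\]
so the two main terms combine to $(\tilde C_k+BkC_{k-1})x(\log\log z)^{\frac{k-1}{2}}$. The stated error in (I), however, is only
\[
O\bigl(x(\log\log z)^{\frac{k-1}{2}}(\log\log z)^{-1/2}\bigr)=O\bigl(x(\log\log z)^{\frac{k-2}{2}}\bigr),
\]
which is weaker than the claimed $O(x(\log\log z)^{\frac{k-3}{2}})$.

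The main obstacle is recovering this lost factor $(\log\log z)^{-1/2}$. I would go back to the decomposition (\ref{AL-25}) and bound each piece with $k$ fixed. Lemma \ref{Natsuki-1} gives the $k$th-power term with error $O(x(\log\log z)^{\frac{k-3}{2}})+O(2^k\pi(z)^k)$. For the last part, $\pi(z)^k\ll z^k/(\log z)^k\leq x/(\log z)^k$, which is negligible. The $(k-1)$th term uses (a) of Proposition \ref{GS-prop-2} with $k-1$ even: $kB\,C_{k-1}x(\log\log z)^{\frac{k-1}{2}}$ plus an error of relative size $1/\log\log z$, which is again $O(x(\log\log z)^{\frac{k-3}{2}})$. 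The $O(k/\log z)$ part of its coefficient is also negligible.

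The terms $j\leq k-2$ in (\ref{AL-25}) are where the crude estimate (\ref{san-sk-2}) lost the half power. Here parity must be used.
\begin{itemize}
\item For odd $j$, Remark \ref{remark-b-Prop-2-1-bun} gives the bound $x(\log\log z)^{\frac{j-1}{2}}$, and since $j\leq k-2$ this is at most $x(\log\log z)^{\frac{k-3}{2}}$. For $j=1$ the bound is $O(\pi(z))$ by Remark \ref{k=1-lemma-natsuki-1-remark}.
\item For even $j\leq k-1$ we get $j\leq k-3$, so (\ref{Remark-Lemma-2-2-Natsuki-2}) gives $x(\log\log z)^{j/2}\leq x(\log\log z)^{\frac{k-3}{2}}$.
\end{itemize}
One more point needs care. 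In (\ref{AL-25}) the constant $O(1)$ is really $B+O(1/\log z)$. I must check that its powers do not disturb the coefficient of $(\log\log z)^{\frac{k-1}{2}}$. Only $j=k$ and $j=k-1$ contribute at that order, so the total error is $O(x(\log\log z)^{\frac{k-3}{2}})$, which gives the odd case.
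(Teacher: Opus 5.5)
Your argument is correct, and it is more careful than the paper's own justification, which is the single sentence that the corollary follows from Theorem \ref{Hatsumi-1}.

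Your diagnosis is right. For fixed odd $k$, case (I) of Theorem \ref{Hatsumi-1}(b) gives only the error $O(x(\log\log z)^{\frac{k-2}{2}})$. The reason is that (\ref{san-sk-2}) bounds every lower moment $j\le k-2$ by $C_jx(\log\log z)^{j/2}$ regardless of parity, and the odd term $j=k-2$ alone costs the extra $(\log\log z)^{1/2}$. So the one-line deduction does not by itself give the stated odd-case error.

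Your repair works. You go back to (\ref{AL-25}), apply the odd-moment bound of Remark \ref{remark-b-Prop-2-1-bun} for odd $j\le k-2$, and note that even $j\le k-2$ forces $j\le k-3$. This recovers exactly the missing factor. Your bullet should read ``even $j\le k-2$'' rather than ``$j\le k-1$''. The case $j=0$, giving $\ll x$, is covered since $k\geq 3$. Your handling of the $j=k$ and $j=k-1$ terms, of the $O(1/\log z)$ perturbation of $B$, and of $\pi(z)^k\ll x/(\log z)^k$ is sound.

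Within the paper, the odd case with this error is actually supplied by Theorem \ref{K-theorem}, proved afterwards via Propositions \ref{K-prop-1} and \ref{K-prop-2}. That route gives the full polynomial in $\log\log z$, with leading coefficient $\tilde C_k+kBC_{k-1}$ and error $O(x(\log\log z)^{\frac{k-1}{2}}/\log z)$. The price is the combinatorial bookkeeping of the $I_s(k)$, and the $O$-constants depend on the fixed $k$.

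Your route needs only the uniform moment bounds already proved, plus parity. Carried out uniformly in $k$, the same observation would also remove the $(k\log\log z)^{-1/2}$ loss from case (I) of Theorem \ref{Hatsumi-1}(b).
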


However, in the cases $k=2,\ldots, 6$ we can obtain more precise asymptotic formulas for $\sum_{n\leq x}(\omega_{z}(n)-\log\log z)^{k}$. To derive that, 
we shall introduce a notation.

\begin{definition}\label{def-GPA}
Let $A=\{\alpha_{1},\ldots, \alpha_{l}\}$ be a finite set of integers $\alpha_{i}\geq 2$. For any prime $p$ and the function $G(\cdot)$ defined by (\ref{916-def}),
we define $G(p,A)$ as
\begin{align*}
G(p,A):=G\left(p^{\alpha_{1}}\right)\cdots G\left(p^{\alpha_{l}}\right).
\end{align*}
By Lemma \ref{G(r)-lemma} and (\ref{MertensFormula-2026310}), we can put constants $B(\alpha)$ and $D(A)$ as
\begin{align}\label{pelikan-310}
\sum_{n\leq z}G(p,A)=
\begin{cases}\log\log z + B(\alpha) +O\left(\frac{1}{\log z}\right) & (A=\{\alpha\})\\
             D(A)+O\left(\frac{1}{z^{\sharp A -1}}\right)            & (\sharp A \geq 2)
\end{cases}.
\end{align} 
We may write $D(\alpha_{1},\ldots, \alpha_{l})$ instead of $D(A)$.
\end{definition}

\begin{theorem}\label{Sakai-Haruka-Theorem}
Keep the notation above, and $B$ be the positive constant defined in (\ref{teigi-teisu-B}).
For sufficiently large $x\geq 1$ and $z\geq 1$, we have the following formulas.
\begin{enumerate}
\item[\rm (a)] Under the restriction $z\leq x^{1/2}$, then
\begin{align*}
\sum_{n\leq x}(\omega_{z}(n)-\log\log z)^{2}=x\log\log z +\left(B(2)+B^{2}\right)x+O\left(\frac{x}{\log z}\right).
\end{align*}
\item[\rm (b)] Under the restriction $z\leq x^{1/3}$, then
\begin{align*}
&\sum_{n\leq x}(\omega_{z}(n)-\log\log z)^{3}\\
&= (1+3B)x\log\log z +\left(B(3)+3B(2)B+B^{3}\right)x +O\left(\frac{x\log\log z}{\log z}\right).
\end{align*}
\item[\rm (c)] Under the restriction $z\leq x^{1/4}$, then
\begin{align*}
&\sum_{n\leq x}(\omega_{z}(n)-\log\log z)^{4}\\
&=3x(\log\log z)^{2} +\left(6B(2)+4B+6B^{2}+1\right)x\log\log z\\
&\quad +\left(3B(2)^{2}-3D(2,2)+B(4)+4B(3)B+6B(2)B^{2}+B^{4}\right)x\\
&\quad +O\left(\frac{x\log\log z}{\log z}\right).
\end{align*}
\item[\rm (d)] Under the restriction $z\leq x^{1/5}$, then
\begin{align*}
&\sum_{n\leq x}(\omega_{z}(n)-\log\log z)^{5}\\
&=(10+15B)x(\log\log z)^{2}\\
&\quad +\left(10(B(3)+B(2))+1 +5(6B(2)+1)B+10B^{2}+10B^{3}\right)x\log\log z\\
&\quad +\left(10(B(3)B(2)-D(3,2))+B(5)\right.\\
&\quad\quad  \left.+5(3B(2)^{2}-3D(2,2)+B(4))B+10B(3)B^{2}+10B(2)B^{3}+5B^{5}\right)x\\
&\quad +O\left(\frac{x(\log\log z)^{2}}{\log z}\right).
\end{align*}
\item[\rm (e)] Under the restriction $z\leq x^{1/6}$, then
\begin{align*}
&\sum_{n\leq x}(\omega_{z}(n)-\log\log z)^{6}\\
&= 15 x (\log\log z)^{3}+ (45B(2)+25 +60B+45B^{2})x(\log\log x)^{2}\\
&\quad +\left(45B(2)^{2}-45D(2,2)+15(B(2)+B(4))+20B(3)+1\right.\\
&\quad +\left. 6(10(B(3)+B(2))+1)B+15(6B(2)+1)B^{2}+20B^{3}+15B^{4}\right)\times \\
&\quad\quad  \times x\log \log z\\
&\quad +\left(15B(2)^{3}-45B(2)D(2,2)+30D(2,2,2)\right.\\
&\quad\quad  +15(B(2)B(4)-D(4,2))+10\left(B(3)^{2}-D(3,3)\right)+B(6)\\
&\quad\quad  +6(10(B(2)B(3)-D(3,2))+B(5))B\\
&\quad\quad  +15(3B(2)^{2}-3D(2,2)+B(4))B^{2}\\
&\quad\quad  +\left. 20B(3)B^{3}+15B(2)B^{4}+B^{6} \right)x +O\left(\frac{x(\log\log z)^{2}}{\log z}\right).
\end{align*}
\end{enumerate}
\end{theorem}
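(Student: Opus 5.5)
We will follow the route of Section 3 exactly, but for fixed $k\le 6$ we keep every lower-order term instead of discarding it. Write $S(n):=\sum_{p\le z}f_p(n)$ and $M_j:=\sum_{n\le x}S(n)^j$. By (\ref{ogai}),
\[
\omega_z(n)-\log\log z=S(n)+B+O(1/\log z).
\]
Expanding binomially (the exact analogue of (\ref{AL-25}), with the $O(1)$ replaced by $B+O(1/\log z)$) gives
\[
\sum_{n\le x}(\omega_z(n)-\log\log z)^k=\sum_{j=0}^k\binom{k}{j}\left(B+O\Big(\frac1{\log z}\Big)\right)^{k-j}M_j .
\]
Hence it suffices to compute each $M_j$, $0\le j\le k$, up to an error $O(x(\log\log z)^{\lfloor j/2\rfloor-1}/\log z)$ plus a negligible term. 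We use $M_0=[x]=x+O(1)$ and, from Remark \ref{k=1-lemma-natsuki-1-remark}, $M_1=O(\pi(z))$.

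For $2\le j\le 6$ we use (\ref{hoshi-6}):
\[
M_j=x\sum_{\substack{p_1,\dots,p_j\le z\\ \text{square-full}}}G(p_1\cdots p_j)+O(2^j\pi(z)^j).
\]
Since $z\le x^{1/k}$ and $j\le k$, the error is $O(x/(\log z)^j)$, which is acceptable. We then group the square-full products by their exponent patterns, as in (\ref{I-wa}), keeping exact multinomial weights:
\begin{itemize}
\item $j=2$: the only pattern is $(2)$;
\item $j=3$: $(3)$;
\item $j=4$: $(4)$ and $(2,2)$ with weight $\binom{4}{2}/2!=3$;
\item $j=5$: $(5)$ and $(3,2)$ with weight $10$;
\item $j=6$: $(6)$, $(4,2)$ with weight $15$, $(3,3)$ with weight $10$, and $(2,2,2)$ with weight $15$.
\end{itemize}
Sums over distinct primes $q_1<\dots<q_s$ of products $\prod G(q_i^{\alpha_i})$ are converted to unrestricted sums by inclusion--exclusion over coincidences, which is precisely where the constants $D(A)$ of Definition \ref{def-GPA} enter. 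For instance,
\[
\sum_{q_1\ne q_2}G(q_1^2)G(q_2^2)=\Big(\sum_p G(p^2)\Big)^2-\sum_pG(p^2)^2 .
\]
Similarly, the sum over three distinct primes for $(2,2,2)$ equals
\[
\Big(\sum G(p^2)\Big)^3-3\sum G(p^2)\sum G(p^2)^2+2\sum G(p^2)^3,
\]
which accounts for the terms $-45B(2)D(2,2)+30D(2,2,2)$. Each one-prime sum is $\log\log z+B(\alpha)+O(1/\log z)$, and each multi-$G$ sum is $D(A)+O(1/z)$, by (\ref{pelikan-310}). Doing this gives, for example,
\[
M_4/x=3(\log\log z+B(2))^2-3D(2,2)+\log\log z+B(4)+O\Big(\frac{\log\log z}{\log z}\Big).
\]

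Next we substitute these expressions into the binomial expansion and collect powers of $\log\log z$; this yields the stated constants. Two points need care. First, where the statement writes $5B^5$ in (d) and $\log\log x$ in (e), we will check these against the expansion; they look like typos for $B^5$ and $\log\log z$. Second, the factor $(B+O(1/\log z))^{k-j}$ is expanded as $B^{k-j}+O(1/\log z)$; multiplied by $M_j\ll x(\log\log z)^{j/2}$, this produces the stated error terms. As a check, (a) requires $2M_1B/x=O(\pi(z)/x)$, which is negligible.

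The main obstacle is the combinatorial bookkeeping for $k=5,6$: enumerating all exponent patterns with the right multinomial weights, applying the correct inclusion--exclusion for coincident primes, and propagating the $B$-binomial cross terms. We will organise this by first recording $M_j/x$ for $j\le6$ as explicit polynomials in $L=\log\log z$, and only then forming $\sum_j\binom{k}{j}B^{k-j}M_j$. This keeps each step mechanical.
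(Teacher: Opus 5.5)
Your proposal is correct and is essentially the paper's own proof. It uses the same binomial expansion coming from (\ref{ogai}), then (\ref{hoshi-6}) and (\ref{I-wa}) with the same exponent patterns and weights, and inclusion--exclusion over coinciding primes producing the constants $B(\alpha)$ and $D(A)$. You are also right that $5B^{5}$ in (d) and $\log\log x$ in (e) are typos for $B^{5}$ and $\log\log z$.

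The only slip is cosmetic. To obtain the stated error terms for even $k$, use $M_j\ll x(\log\log z)^{\lfloor j/2\rfloor}$, which your explicit polynomials give, rather than $x(\log\log z)^{j/2}$.
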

\begin{proof}
Let $k\geq 2$ be a fixed integer. By (\ref{ogai}), we have
\begin{align}\label{23456-formula}
\sum_{n\leq x}(\omega_{z}(n)-\log\log z)^{k}=\sum_{l=0}^{k}\binom{k}{l}\sum_{n\leq x}\left(\sum_{p\leq z}f_{p}(n)\right)^{l}B^{k-l}+\sum_{n\leq x}S(k),
\end{align}
where
\begin{align*}
S(k)=\sum_{l=1}^{k}\binom{k}{l}\left(\sum_{p\leq z}f_{p}(n)+B\right)^{k-l}O\left(\frac{1}{(\log z)^{l}}\right).
\end{align*}
Now, assume that $z\leq x^{1/k}$. By (\ref{k=1-lemma-natsuki-1}), (\ref{remark-b-Prop-2-1}), and (\ref{remark-a-Prop-2-1}) we see that
\begin{align}\label{23456-error}
\sum_{n\leq x}S(k) &\ll \frac{1}{\log z}\left| \sum_{n\leq x}\left(\sum_{p\leq z}f_{p}(n)\right)^{k-1}\right|\nonumber \\
&\ll \frac{1}{\log z} \times \begin{cases} x(\log\log z)^{\frac{k-2}{2}}, & (k\geq 2, \textit{even}),\\
                    x(\log\log z)^{\frac{k-1}{2}}, & (k\geq 3, \textit{odd}).
\end{cases}
\end{align}
Moreover, we shall derive formulas for $\sum_{n\leq x}(\sum_{p\leq z}f_{p}(n))^{k}$:
\begin{align}
\sum_{n\leq x}\left(\sum_{p\leq z}f_{p}(n)\right)^{2} 
=x\log\log z +B(2)x +O\left(\frac{x}{\log z}\right), \quad (z\leq x^{1/2}), \label{S-f2-sum}
\end{align}

\begin{align}
\sum_{n\leq x}\left(\sum_{p\leq z}f_{p}(n)\right)^{3}
=x\log\log z +B(3)x +O\left(\frac{x}{\log z}\right), \quad (z\leq x^{1/3}), \label{S-f3-sum}
\end{align}

\begin{align}
&\sum_{n\leq x}\left(\sum_{p\leq z}f_{p}(n)\right)^{4}\nonumber \\
&=3x (\log\log z)^{2} +(6B(2)+1)x\log\log z\nonumber \\
&\quad +(3B(2)^{2}-3D(2,2)+B(4))x +O\left(\frac{x\log\log z}{\log z}\right), \quad (z\leq x^{1/4}), \label{S-f4-sum}
\end{align}

\begin{align}
&\sum_{n\leq x}\left(\sum_{p\leq z}f_{p}(n)\right)^{5}\nonumber \\
&=10x (\log\log z)^{2} +(10(B(3)+B(2))+1)x\log\log z \nonumber \\
&\quad +(10(B(3)B(2)-D(3,2))+B(5))x +O\left(\frac{x\log\log z}{\log z}\right), \label{S-f5-sum}
\end{align}
($z\leq x^{1/5}$), and 
\begin{align}
&\sum_{n\leq x}\left(\sum_{p\leq z}f_{p}(n)\right)^{6}\nonumber \\
&=15x (\log\log z)^{3} +(45B(2)+25)x(\log\log z)^{2}\nonumber \\
&\quad +(45B(2)^{2}-45D(2,2)+15(B(2)+B(4))+20B(3)+1)x\log\log z\nonumber \\
&\quad +(15B(2)^{3}-45D(2,2)B(2)+30D(2,2,2)\nonumber \\
&\quad\quad   +15(B(4)B(2)-D(4,2))+10\left(B(3)^{2}-D(3,3)\right)+B(6))x\nonumber\\
&\quad +O\left(\frac{x(\log\log z)^{2}}{\log z}\right), \label{S-f6-sum}
\end{align}
($z\leq x^{1/6}$).

To show (\ref{S-f2-sum})--(\ref{S-f6-sum}) we shall go back to (\ref{hoshi-6}) and (\ref{I-wa}). As for $k=2,3$, by $\pi(z)\ll \frac{z}{\log z}$ and $z\leq x^{1/k}$
we see that
\begin{align*}
&\sum_{n\leq x}\left(\sum_{p\leq z}f_{p}(n)\right)^{2}=xI_{1}(2) +O\left(\frac{x}{(\log z)^{2}}\right),\quad (z\leq x^{1/2}),\\
&\sum_{n\leq x}\left(\sum_{p\leq z}f_{p}(n)\right)^{3}=xI_{1}(3) +O\left(\frac{x}{(\log z)^{3}}\right),\quad (z\leq x^{1/3}).
\end{align*}
By (\ref{pelikan-310}), since $I_{1}(2)=\sum_{p\leq z}G(p^{2})=\log\log z+B(2)+O\left(\frac{1}{\log z}\right)$ and 
$I_{1}(3)=\sum_{p\leq z}G(p^{3})=\log\log z +B(3)+O\left(\frac{1}{\log z}\right)$, we get (\ref{S-f2-sum}) and (\ref{S-f3-sum}), at once.

As for $k=4$, first we see that
\begin{align}\label{f4-sum-step-1}
\sum_{n\leq x}\left(\sum_{p\leq z}f_{p}(n)\right)^{4}=x(I_{2}(4)+I_{1}(4))+O\left(\frac{x}{(\log z)^{4}}\right).
\end{align}
Using Lemma \ref{G(r)-lemma} and the formula  (\ref{pelikan-310}) in Definition \ref{def-GPA} we observe that
\begin{align}\label{f4-sum-step-2}
I_{2}(4) 
&=6\sum_{q_{1}<q_{2}\leq z}G\left(q_{1}^{2}\right)G\left(q_{2}^{2}\right)\nonumber\\
        &=3 \left(\left(\sum_{q_{1}\leq z}G\left(q_{1}^{2}\right)\right)\left(\sum_{q_{2}\leq z}G\left(q_{2}^{2}\right)\right)-\sum_{q\leq z}G\left(q^{2}\right)G(q^{2})\right)\nonumber \\
        &=3 (\log\log z)^{2} +6B(2)\log\log z +3B(2)^{2}-3D(2,2) \nonumber \\
        &\quad +O\left(\frac{\log\log z}{\log z}\right).
\end{align}
And, easily we see
\begin{align}\label{f4-sum-step-3}
I_{1}(3)=\sum_{q_{1}\leq z}G(q_{1}^{4})=\log\log z +B(4)+O\left(\frac{1}{\log z}\right).
\end{align}
Combining (\ref{f4-sum-step-1})--(\ref{f4-sum-step-3}) we get (\ref{S-f4-sum}).

As for $k=5$, we have
\begin{align*}
\sum_{n\leq x}\left(\sum_{p\leq z}f_{p}(n)\right)^{5}=x(I_{2}(5)+I_{1}(5))+O\left(\frac{x}{(\log z)^{5}}\right),
\end{align*}
\begin{align*}
I_{2}(5)&=10\sum_{\begin{subarray}{c}q_{1}, q_{2}\leq z\\ \textit{distinct}\end{subarray}}G\left(q_{1}^{3}\right)G\left(q_{2}^{2}\right)\\
        &=10(\log\log z)^{2} +10(B(3)+B(2))\log\log z \\
        &\quad   +10(B(3)B(2)-D(3,2)) +O\left(\frac{\log\log z}{\log z}\right),
\end{align*}
and
\begin{align*}
I_{1}(5)=\sum_{q\leq z}G(q^{5})=\log\log z +B(5)+O\left(\frac{1}{\log z}\right),
\end{align*}
therefore we get (\ref{S-f5-sum}).

As for $k=6$, we have
\begin{align*}
\sum_{n\leq x}\left(\sum_{p\leq z}f_{p}(n)\right)^{6}=x(I_{3}(6)+I_{2}(6)+I_{1}(6))+O\left(\frac{x}{(\log z)^{6}}\right),
\end{align*}

\begin{align*}
I_{3}(6)
&=90 \sum_{q_{1}<q_{2}< q_{3}\leq z}G\left(q_{1}^{2}\right)G\left(q_{2}^{2}\right)G\left(q_{3}^{2}\right)\\
&=15\sum_{\begin{subarray}{c}q_{1},q_{2},q_{3}\leq z\\ \textit{distinct}\end{subarray}} G\left(q_{1}^{2}\right)G\left(q_{2}^{2}\right)G\left(q_{3}^{2}\right)\\
&=15 \left(\left(\sum_{p\leq z}G(p^{2})\right)^{3} -3\sum_{\begin{subarray}{c}p, q\leq z\\ \textit{distinct}\end{subarray}}\left(G(p^{2})\right)^{2}G(q^{2})
               -\sum_{p\leq z}\left(G(p^{2})\right)^{3}\right)\\
&=15(\log\log z)^{3}\\
&\quad +45B(2)(\log\log z)^{2}+ (45B(2)^{2}-45D(2,2))\log\log z\\
&\quad +15B(2)^{3}-45D(2,2)B(2)+30D(2,2,2)+O\left(\frac{(\log\log z)^{2}}{\log z}\right),
\end{align*}

\begin{align*}
I_{2}(6)&=15\sum_{q_{1}<q_{2}\leq z}\left(G(q_{1}^{4})G\left(q_{2}^{2}\right)+G(q_{1}^{2})G\left(q_{2}^{4}\right)\right)+20\sum_{q_{1}<q_{2}\leq z}G\left(q_{1}^{3}\right)G\left(q_{2}^{3}\right)\\
&=15\sum_{\begin{subarray}{c}q_{1}, q_{2}\leq z\\ \textit{distinct}\end{subarray}}G\left(q_{1}^{4}\right)G\left(q_{2}^{2}\right)
   +10\sum_{\begin{subarray}{c}q_{1}, q_{2}\leq z\\ \textit{distinct}\end{subarray}}G\left(q_{1}^{3}\right)G\left(q_{2}^{3}\right)\\    
        &=15 (\log\log z)^{2}+15 (B(4)+B(2))\log\log z \\
&\quad +15 (B(4)B(2)-D(4,2)) +O\left(\frac{\log\log z}{\log z}\right)\\
&\quad +10 (\log\log z)^{2} +20B(3)(\log\log z) +10\left(B(3)^{2}-D(3,2)\right)\\
&\quad +O\left(\frac{\log\log z}{\log z}\right)\\
&= 25(\log\log z)^{2}+\left(15(B(4)+B(2)) +20B(3)\right)\log\log z \\
&\quad +15(B(4)B(2)-D(4,2))+10(B(3)^{2}-D(3,2))\\
&\quad +O\left(\frac{\log\log z}{\log z}\right) 
\end{align*}

\begin{align*}
I_{1}(6)=\sum_{q\leq z}G(q^{6}) =\log\log z +B(6) +O\left(\frac{1}{\log z}\right),
\end{align*}
then, we reach (\ref{S-f6-sum}).

Finally, for each $k=2,3,4,5,6$ we apply (\ref{S-f2-sum})--(\ref{S-f6-sum}) to (\ref{23456-formula}) with (\ref{23456-error}),
then we get the assertions of Theorem  \ref{Sakai-Haruka-Theorem}.
\end{proof}

\begin{remark}\label{remark-akademori} It is easily to get the assertion on Theorem \ref{Hatsumi-1} for $k=1$ with $2\leq z\leq x$. By (\ref{ogai}) and (\ref{k=1-lemma-natsuki-1}) 
(or (\ref{Remark-Lemma-2-2-Natsuki-2}), $k=1$)
we have
\begin{align*}
\sum_{n\leq x}(\omega_{z}(n)-\log\log z)&=\sum_{n\leq x}\left(\sum_{p\leq z}f_{p}(n)\right) +Bx +O\left(\frac{x}{\log z}\right)\\
&=Bx +O\left(\frac{x}{\log z}\right).
\end{align*}
\end{remark}
%
%
%
At the last of this section we shall now improve Corollary \ref{k-jyo-corocoro}. 
\begin{theorem}\label{K-theorem}
Keep the notation the above.
Let $k\geq 2$ be a fixed integer and $z\leq x^{1/k}$. For sufficiently large $x\geq e^{e}$ and $z\geq e^{e}$, we have
\begin{align}
&\sum_{n\leq x}(\omega_{z}(n)-\log\log z)^{k}\nonumber \\
&=x\sum_{j=0}^{\left[\frac{k}{2}\right]}a_{j}(\log\log z)^{j} +
\begin{cases} O\left(\frac{x(\log\log z)^{\frac{k-2}{2}}}{\log z}\right), & (k\geq 2,\ even )\\
O\left(\frac{x(\log\log z)^{\frac{k-1}{2}}}{\log z}\right)                & (k\geq 3,\ odd )
\end{cases}\label{K-theorem-12}
\end{align}
where $a_{j}$ are constants, especially
\begin{align}
a_{\left[\frac{k}{2}\right]}=
\begin{cases}
C_{k} & (k\geq 2,\ even)\\
\tilde{C}_{k} +Bk C_{k-1} & (k\geq 3,\ odd)
\end{cases}. \label{K-theorem-22}
\end{align}
\end{theorem}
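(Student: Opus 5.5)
We follow the proof of Theorem \ref{Sakai-Haruka-Theorem}, now for a general fixed $k$. First, by (\ref{23456-formula}) we have
\begin{align*}
\sum_{n\leq x}(\omega_{z}(n)-\log\log z)^{k}=\sum_{l=0}^{k}\binom{k}{l}B^{k-l}\sum_{n\leq x}\Bigl(\sum_{p\leq z}f_{p}(n)\Bigr)^{l}+\sum_{n\leq x}S(k),
\end{align*}
and (\ref{23456-error}) already bounds $\sum_{n\leq x}S(k)$ by exactly the error term in (\ref{K-theorem-12}).

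It then suffices to show that, for each fixed $0\leq l\leq k$ and $z\leq x^{1/k}\leq x^{1/l}$,
\begin{align*}
\sum_{n\leq x}\Bigl(\sum_{p\leq z}f_{p}(n)\Bigr)^{l}=x P_{l}(\log\log z)+O\Bigl(\frac{x(\log\log z)^{[l/2]-1}}{\log z}\Bigr),
\end{align*}
where $P_{l}$ is a polynomial of degree $[l/2]$. Its leading coefficient is $C_{l}$ for even $l$ and $\tilde{C}_{l}$ for odd $l\geq3$; for $l=1$ it is $0$ and for $l=0$ it is $1$. Assuming this, summing with weights $\binom{k}{l}B^{k-l}$ gives a polynomial of degree $[k/2]$.

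The top coefficient comes from two sources. For even $k$ only $l=k$ reaches degree $k/2$, since $l=k-1$ has degree $(k-2)/2$; this gives $C_{k}$. For odd $k$, both $l=k$ (coefficient $\tilde{C}_{k}$) and $l=k-1$ (degree $(k-1)/2$, coefficient $kB\,C_{k-1}$) contribute, giving (\ref{K-theorem-22}). All error terms from $l<k$ are dominated by $x(\log\log z)^{[k/2]-1}/\log z$, which is at most the stated error.

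To prove the claim for fixed $l$, we start from (\ref{hoshi-6}). The term $O(2^{l}\pi(z)^{l})$ is $\ll x/(\log z)^{l}$ because $z\leq x^{1/l}$. The main sum is $x\sum_{s\leq l/2}I_{s}(l)$ as in (\ref{I-wa}). Each $I_{s}(l)$ is a finite combination, with multinomial coefficients $l!/(\alpha_{1}!\cdots\alpha_{s}!)$ divided by symmetry factors, of sums
\begin{align*}
\sum_{\substack{q_{1},\ldots,q_{s}\leq z\\ \text{distinct}}}G(q_{1}^{\alpha_{1}})\cdots G(q_{s}^{\alpha_{s}}).
\end{align*}
We remove the distinctness condition by Möbius inversion over set partitions of $\{1,\ldots,s\}$, exactly as was done for $I_{3}(6)$. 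This expresses such a sum as a finite signed combination of products $\prod_{\text{blocks }b}\sum_{p\leq z}G(p,A_{b})$, where $A_{b}=\{\alpha_{i}:i\in b\}$.

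By (\ref{pelikan-310}), a singleton block gives $\log\log z+B(\alpha)+O(1/\log z)$, and a block with $\sharp A_{b}\geq2$ gives $D(A_{b})+O(z^{-1})$. Multiplying out, we obtain a polynomial in $\log\log z$ of degree at most $s$ (degree $s$ only from the all-singletons partition), with error $O((\log\log z)^{s-1}/\log z)$. Since $s\leq[l/2]$, this yields the required error term.

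For the leading coefficient, degree $[l/2]$ arises only from $s=[l/2]$ with all blocks singletons. For even $l$ this is the configuration with all $\alpha_{i}=2$, giving $l!/(2^{l/2}(l/2)!)=C_{l}$. For odd $l$ it is the configuration with one $3$ and the rest $2$'s, giving $l!/(3\cdot2^{(l-1)/2}((l-3)/2)!)$, which equals $\tilde{C}_{l}$ by (\ref{a-hoshi-da}) and (\ref{teigi-C-j-tilde}).

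The main obstacle is bookkeeping rather than analysis. We must check that the inclusion–exclusion over partitions produces no error worse than $(\log\log z)^{s-1}/\log z$; the $O(z^{1-\sharp A})$ tails are negligible. We must also verify the identity $\frac{l!}{3\cdot 2^{(l-1)/2}((l-3)/2)!}=\tilde{C}_{l}$ via $\Gamma(\frac{l}{2}+1)$ and the duplication formula. Everything else is a direct generalisation of the $k\leq6$ computations above.
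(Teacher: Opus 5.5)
Your proposal is correct and follows essentially the paper's route. You start from (\ref{23456-formula}) and (\ref{23456-error}), then reduce each $l$-th moment via (\ref{hoshi-6}) and (\ref{I-wa}) to sums over distinct primes with symmetry factors, which is Proposition \ref{K-prop-2}. You show these are polynomials in $\log\log z$ of degree at most $s$ with error $O((\log\log z)^{s-1}/\log z)$, which is Proposition \ref{K-prop-1}; you get it by explicit M\"obius inversion over set partitions rather than by the paper's induction on $L$, but both rest on the same partition expansion of a product of sums. Finally you read off the top coefficient from $l=k$ and, for odd $k$, from $l=k-1$; your identity $l!/(3\cdot 2^{(l-1)/2}((l-3)/2)!)=\tilde{C}_{l}$ and your bound $2^{l}\pi(z)^{l}\ll x/(\log z)^{l}$ both check out.
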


First we shall show the following proposition.
\begin{proposition}\label{K-prop-1}
We fix an integer $k\geq 2$ and keep the notation of $G(p,A)$ in Definition \ref{def-GPA}.
Let $L$ be any integer $\geq 2$. For $L$ many nonempty finite set $A_{1}, \ldots, A_{L}$ which are sets of integers $\geq 2$, respecitively,
we have for sufficiently large $z\geq e^{e}$
\begin{align*}
\sum_{\begin{subarray}{c}p_{1}, \ldots, p_{L}\leq z\\ distinct\end{subarray}}G(p_{1}, A_{1})\cdots G(p_{L}, A_{L})
=\sum_{j=0}^{L}\mathcal{L}_{j}(\log\log z)^{j} +O\left(\frac{(\log\log z)^{L-1}}{\log z}\right),
\end{align*}
where $\mathcal{L}_{j}$ ($j=1,\ldots, L$) are constants. Especially, if
$\sharp A_{1}=\cdots =\sharp A_{L}=1$, then $\mathcal{L}_{L}=1$.
\end{proposition}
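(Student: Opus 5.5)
We argue by removing the distinctness condition with inclusion–exclusion over set partitions, i.e.\ Möbius inversion on the partition lattice of $\{1,\ldots,L\}$. Write $g_i(p):=G(p,A_i)$. Lemma \ref{G(r)-lemma} (ii) gives $G(p^{\alpha})=\frac1p+O(\frac1{p^2})$ for $\alpha\ge2$. Hence $g_i(p)=\frac1p+O(p^{-2})$ if $\sharp A_i=1$, and $g_i(p)\ll p^{-\sharp A_i}\le p^{-2}$ otherwise.

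First, for every partition $\pi=\{P_1,\ldots,P_m\}$ of $\{1,\ldots,L\}$, put
\begin{align*}
S(\pi):=\prod_{b=1}^{m}\Bigl(\sum_{p\le z}\prod_{i\in P_b}g_i(p)\Bigr).
\end{align*}
The standard identity
\begin{align*}
\sum_{\substack{p_1,\ldots,p_L\le z\\ \mathrm{distinct}}}\prod_{i}g_i(p_i)=\sum_{\pi}\mu(\hat0,\pi)\,S(\pi)
\end{align*}
holds with $\mu(\hat0,\pi)=\prod_b(-1)^{|P_b|-1}(|P_b|-1)!$. This is a finite identity with coefficients depending only on $L$. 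Since $k$ and $L$ are fixed, all implied constants may depend on them.

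Second, each factor of $S(\pi)$ is a sum $\sum_{p\le z}G(p,A)$ with $A$ the multiset union $\biguplus_{i\in P_b}A_i$. The definition of $G(p,A)$ uses a finite set, but the proof of (\ref{pelikan-310}) works for multisets, so we will note this extension. If $|A|=1$ (a singleton block with $\sharp A_i=1$), Mertens-type asymptotics give $\log\log z+B(\alpha)+O(1/\log z)$. This follows from (\ref{MertensFormula-2026310}) together with the convergence of $\sum_p(G(p^\alpha)-1/p)$, whose tail is $O(1/z)$. Otherwise $|A|\ge2$, and the factor is $D(A)+O(z^{-1})$ by absolute convergence, the tail being $\sum_{p>z}p^{-2}\ll 1/z$.

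Third, multiply out. Each $S(\pi)$ is a product of at most $L$ factors of the form $(\log\log z+c+O(1/\log z))$ or $(c'+O(1/z))$. So it equals a polynomial in $\log\log z$ of degree at most $L$ with constant coefficients, plus an error. In that error one $O(1/\log z)$ factor is multiplied by at most $L-1$ factors of size $O(\log\log z)$, giving $O((\log\log z)^{L-1}/\log z)$. Summing over the finitely many $\pi$ gives the claimed expansion with constants $\mathcal L_j$.

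Finally, the leading coefficient. The degree-$L$ term can arise only from the discrete partition $\pi=\hat0$ (all singletons), since any other partition has fewer than $L$ blocks. That term has $\mu=1$ and its coefficient is the product of the leading coefficients of the singleton factors. If all $\sharp A_i=1$, each factor contributes leading coefficient $1$, so $\mathcal L_L=1$. If some $\sharp A_i\ge2$, the degree is actually smaller and $\mathcal L_L=0$, which is consistent with the statement.

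The only step needing care is the error bookkeeping in the third step: the product of main terms and errors must not produce a term larger than $(\log\log z)^{L-1}/\log z$. This holds because any term containing an error factor has at most $L-1$ other factors, each $O(\log\log z)$.
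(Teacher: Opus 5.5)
Your proof is correct and uses the same idea as the paper: products of unrestricted sums $\sum_{p\le z}G(p,A)$ are related to sums over distinct primes through set partitions of $\{1,\ldots,L\}$, and the merged multisets $\biguplus_{i\in P_b}A_i$ are evaluated by the asymptotics in Definition \ref{def-GPA}. The paper inverts this relation recursively, by induction on $L$, isolating the finest partition and applying the hypothesis to the coarser ones; you instead write the inverse explicitly with $\mu(\hat0,\pi)$, which removes the induction and makes both the error bookkeeping and $\mathcal{L}_L=1$ immediate.
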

\begin{proof}
We use an induction. When $L=2$, we see that
\begin{align*}
&\left(\sum_{p_{1}\leq z}G(p_{1}, A_{1})\right)\left(\sum_{p_{2}\leq z}G(p_{2}, A_{2})\right)\\
&=\sum_{p\leq z}G(p, A_{1}\sqcup A_{2})+\sum_{\begin{subarray}{c}p_{1},p_{2}\leq z \\ distinct\end{subarray}}G(p_{1}, A_{1})G(p_{2}, A_{2}).
\end{align*}
On the other hand, by formulas stated in Definition \ref{def-GPA} we observe that
\begin{align*}
&\left(\sum_{p_{1}\leq z}G(p_{1}, A_{1})\right)\left(\sum_{p_{2}\leq z}G(p_{2}, A_{2})\right)\\
&=\begin{cases}
(\log\log z)^{2} +a_{1}\log\log z +a_{0} +O\left(\frac{\log\log z}{\log z}\right) & (\sharp A_{1}=\sharp A_{2}=1),\\
a_{1}^{\prime}\log\log z +a_{0}^{\prime} +O\left(\frac{1}{\log z}\right)          & (\sharp A_{1}=1, \sharp A_{2}\geq 2,\\ 
                                                                                  &\quad or\ \sharp A_{1}\geq 2, \sharp A_{2}=1),\\
a_{0}^{\prime\prime} + O\left(\frac{1}{z}\right) & (\sharp A_{1}\geq 2,\ \sharp A_{2}\geq 2),
\end{cases}
\end{align*}
and
\begin{align*}
\sum_{p\leq z}G(p,A_{1} \sqcup A_{2}) =D(A_{1}\sqcup A_{2}) +O\left(\frac{1}{z}\right).
\end{align*}
Hence, easily we get the assertion for $L=2$.

For $L=2,3,\ldots, M$, we shall assume the assertions hold. When $L=M+1$, we consider
\begin{align*}
\sum_{\begin{subarray}{c}p_{1}, \ldots, p_{M+1}\leq z\\ distinct\end{subarray}}G(p_{1}, A_{1})\cdots G(p_{M+1}, A_{M+1}).
\end{align*}
As for $A:=A_{1}\sqcup \cdots \sqcup A_{M+1}$ we shall split this into $j$ many sets, in which each set is a disjoint union of $A_{i}$.
Denote by $S(m,n)$ the Stirling number of the second kind, that is, the number of ways to divide $m$-elements into nonempty $n$-classes. 
We shall write $S(M+1,j)$-partitions of $A$ as
\begin{align*}
A&=\tilde{A}_{j,1,1}\sqcup \cdots \sqcup \tilde{A}_{j,1,j}\\
 &=\tilde{A}_{j,2,1}\sqcup \cdots \sqcup \tilde{A}_{j,2,j}\\
 &\cdots \\
&=\tilde{A}_{j,S(M+1,j),1}\sqcup \cdots \sqcup \tilde{A}_{j,S(M+1,j),j}.
\end{align*} 
We observe that 
\begin{align*}
&\left(\sum_{p_{1}\leq z} G(p_{1},A_{1})\right)\cdots \left(\sum_{p_{M+1}\leq z} G(p_{M+1},A_{M+1})\right)\\
&=\sum_{j=1}^{M+1}\sum_{\begin{subarray}{c}\{\tilde{A}_{j,l,1}, \ldots, \tilde{A}_{j,l,j}\}\\ l=1,\ldots, S(M+1,j)\end{subarray}}
\sum_{\begin{subarray}{c}p_{1},\ldots, p_{j}\leq z\\ distinct\end{subarray}}G(p_{1}, \tilde{A}_{j,l,1})\cdots G(p_{j}, \tilde{A}_{j,l,j})\\
&=\sum_{\begin{subarray}{c}p_{1},\ldots, p_{M+1}\leq z\\ distinct \end{subarray}}G(p_{1}, A_{1})\cdots G(p_{M+1}, A_{M+1})\\
&\quad +\sum_{j=1}^{M}\sum_{\begin{subarray}{c}\{\tilde{A}_{j,l,1}, \ldots, \tilde{A}_{j,l,j}\}\\ l=1,\ldots, S(M+1,j)\end{subarray}}
\sum_{\begin{subarray}{c}p_{1},\ldots, p_{j}\leq z\\ distinct\end{subarray}}G(p_{1}, \tilde{A}_{j,l,1})\cdots G(p_{j}, \tilde{A}_{j,l,j})
\end{align*}
and by the assumption and formulas in Definition \ref{def-GPA}, we find that
\begin{align*}
&\sum_{\begin{subarray}{c}p_{1},\ldots, p_{M+1}\leq z\\ distinct \end{subarray}}G(p_{1}, A_{1})\cdots G(p_{M+1}, A_{M+1})\\
&=\begin{cases}
(\log\log z)^{M+1} +\sum_{j=0}^{M}b_{j}(\log\log z)^{j} +O\left(\frac{(\log\log z)^{M}}{\log z}\right), & (\forall i, \sharp A_{i}=1)\\
\sum_{j=0}^{M}b_{j}^{\prime}(\log\log z)^{j} +O\left(\frac{(\log\log z)^{M-1}}{\log z}\right), & (otherwise)
\end{cases}\\
&=\sum_{j=0}^{M+1}\mathcal{L}_{j}(\log\log z)^{j}+O\left(\frac{(\log\log z)^{M}}{\log z}\right).
\end{align*}
Then, we complete the proof.
\end{proof}

Next, we shall connect Proposition \ref{K-prop-1} to $I_{s}(k)$ in (\ref{I-wa}). So as to do, we introduce
some notation for $I_{s}(k)$.

\begin{definition}\label{def-NKS}
As for $I_{s}(k)$ in (\ref{I-wa}), we set
\begin{align*}
N(k,s):=\{(\alpha_{1}, \ldots, \alpha_{s})\, |\, \alpha_{1}+\cdots +\alpha_{s}=k,\ \forall \alpha_{i}\geq 2\}.
\end{align*}
Note that since $s\leq k/2$ in (\ref{I-wa}), we see that $\sharp N(k,s)=\binom{k-s-1}{s-1}\geq 1$. We define an equivalence relation $\simeq_{1}$ on $N(k,s)$:\\

If $(\alpha_{1}, \ldots, \alpha_{s})$ and $(\alpha_{1}^{\prime}, \ldots, \alpha_{s}^{\prime})$ in $N(k,s)$ satisfy that 
$\{\alpha_{1}, \ldots, \alpha_{s}\}=\{\alpha_{1}^{\prime}, \ldots, \alpha_{s}^{\prime}\}$ (ignore the order of elements), then
we write $(\alpha_{1}, \ldots, \alpha_{s})\simeq_{1} (\alpha_{1}^{\prime}, \ldots, \alpha_{s}^{\prime})$. 

By this relation we obtain the partion of $N(k,s)$ as
\begin{align*}
N(k,s)=\bigsqcup_{l=1}^{n(k,s)}N_{l}(k,s),
\end{align*} 
where $N_{l}(k,s)$ denote equivalence classes, and $n(k,s)$ denotes the number of them by the relation $\simeq_{1}$.

For each class $N_{l}(k,s)$ we put
\begin{align*}
c(N_{l}(k,s)):=\frac{k!}{\alpha_{1}!\cdots \alpha_{s}!}.
\end{align*}  
Moreover, for a representative $(\alpha_{1},\ldots, \alpha_{s})$ of a class $N_{l}(k,s)$, we write all distinct $\alpha_{i}$ as
$\beta_{1}, \ldots, \beta_{d}$,
and the multiplicity of $\beta_{j}$ as
\begin{align*}
m(\beta_{j}):=\sum_{\begin{subarray}{c}\alpha_{i}\\ \alpha_{i}=\beta_{j}\\ ((\alpha_{1}, \ldots, \alpha_{s}): \textit{a representative of $N_{l}(k,s)$})\end{subarray}}1.
\end{align*}
\end{definition}

\begin{proposition}\label{K-prop-2}
Keep the notation the above. For any $I_{s}(k)$ in (\ref{I-wa}), we have
\begin{align*}
I_{s}(k)=\sum_{l=1}^{n(k,s)}\frac{c(N_{l}(k,s))}{m(\beta_{1})!\cdots m(\beta_{d})!}
\hspace{-1cm}\sum_{\begin{subarray}{c}q_{1}, \ldots, q_{s}\leq z\\ distinct \\ ((\alpha_{1},\ldots, \alpha_{s}): \textit{a representative of $N_{l}(k,s)$})\end{subarray}}
\hspace{-1cm}G(q_{1}^{\alpha_{1}})\cdots G(q_{s}^{\alpha_{s}}).
\end{align*}
\end{proposition}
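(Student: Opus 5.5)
The plan is a pure combinatorial regrouping of the definition of $I_{s}(k)$ in (\ref{I-wa}). By definition,
\begin{align*}
I_{s}(k)=\sum_{(\alpha_{1},\ldots,\alpha_{s})\in N(k,s)}\frac{k!}{\alpha_{1}!\cdots\alpha_{s}!}\sum_{q_{1}<\cdots<q_{s}\leq z}G\left(q_{1}^{\alpha_{1}}\cdots q_{s}^{\alpha_{s}}\right).
\end{align*}
Using (i) of Lemma \ref{G(r)-lemma}, the inner summand is $\prod_{i}G(q_{i}^{\alpha_{i}})$, since the $q_{i}$ are distinct. First I split the outer sum along the partition $N(k,s)=\bigsqcup_{l}N_{l}(k,s)$. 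The multinomial coefficient depends only on the multiset $\{\alpha_{i}\}$, so it equals $c(N_{l}(k,s))$ on each class and factors out.

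Next, for a fixed class $N_{l}(k,s)$ with representative $(\alpha_{1},\ldots,\alpha_{s})$, I would prove
\begin{align*}
\sum_{(\alpha'_{1},\ldots,\alpha'_{s})\in N_{l}(k,s)}\ \sum_{q_{1}<\cdots<q_{s}\leq z}\prod_{i=1}^{s}G\left(q_{i}^{\alpha'_{i}}\right)
=\frac{1}{m(\beta_{1})!\cdots m(\beta_{d})!}\sum_{\substack{q_{1},\ldots,q_{s}\leq z\\ \text{distinct}}}\prod_{i=1}^{s}G\left(q_{i}^{\alpha_{i}}\right).
\end{align*}
Consider the map sending a pair (element $(\alpha'_{i})$ of the class, increasing tuple $q_{1}<\cdots<q_{s}$) to the function $\{q_{1},\ldots,q_{s}\}\to\{\beta_{j}\}$ given by $q_{i}\mapsto\alpha'_{i}$. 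This is a bijection onto the assignments of exponents to an $s$-element set of primes in which $\beta_{j}$ is used exactly $m(\beta_{j})$ times.

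On the other side, an ordered distinct tuple $(q_{1},\ldots,q_{s})$ paired with the fixed representative determines such an assignment, via $q_{i}\mapsto\alpha_{i}$. Each assignment arises from exactly $\prod_{j}m(\beta_{j})!$ ordered tuples, namely those obtained by permuting the positions carrying equal exponents. Since the summand $\prod G(q^{\text{exponent}})$ depends only on the assignment, the identity follows. Summing over $l$ gives the proposition.

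The only delicate step is this double-counting bijection, which I expect to be the main obstacle. One must check that the orbit of a distinct tuple under the stabilizer of the representative, a product of symmetric groups of orders $m(\beta_{j})!$, has full size. This holds because the $q_{i}$ are distinct. One must also check that the orbit gives the same summand, and that the elements of a class correspond exactly to the $s!/\prod_{j}m(\beta_{j})!$ rearrangements of the representative. As a sanity check, the case $k=6$, $s=2$ should reproduce the coefficients $15$ and $10$ computed for $I_{2}(6)$ in the proof of Theorem \ref{Sakai-Haruka-Theorem}.
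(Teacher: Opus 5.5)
Your proposal is correct and is essentially the paper's argument: split $I_{s}(k)$ along the classes $N_{l}(k,s)$, pull out $c(N_{l}(k,s))$, and prove the per-class identity (\ref{NKS-wa-2}) by double counting. The paper organizes that count by writing the sum over distinct tuples as a sum over $\sigma\in S_{s}$ of the chamber sums $q_{\sigma(1)}<\cdots<q_{\sigma(s)}$, grouped into $\simeq_{2}$-classes of size $m(\beta_{1})!\cdots m(\beta_{d})!$; you instead fiber over exponent assignments to $s$-element sets of primes, which is the same bijection seen from the other side (and your $k=6$, $s=2$ check correctly gives $15$ and $10$).
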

\begin{proof}
As for $I_{s}(k)$ in (\ref{I-wa}), using the Definition \ref{def-NKS} we observe that
\begin{align}
I_{s}(k)&=\sum_{q_{1}<\cdots <q_{s}\leq z}\sum_{l=1}^{n(k,s)}\sum_{\forall (\alpha_{1},\ldots, \alpha_{s})\in N_{l}(k,s)}G(q_{1}^{\alpha_{1}})\cdots G(q_{s}^{\alpha_{s}})\nonumber\\
&= \sum_{l=1}^{n(k,s)}c(N_{l}(k,s))\sum_{\forall (\alpha_{1},\ldots, \alpha_{s})\in N_{l}(k,s)} \sum_{q_{1}<\cdots <q_{s}\leq z}G(q_{1}^{\alpha_{1}})\cdots G(q_{s}^{\alpha_{s}}).
\label{NKS-wa-1}
\end{align} 
Next, we fix a representative $(\alpha_{1},\ldots, \alpha_{s})$ in each class $N_{l}(k,s)$ and show that
for each class $N_{l}(k,s)$
\begin{align}
&\sum_{\forall (\alpha_{1},\ldots, \alpha_{s})\in N_{l}(k,s)} \sum_{q_{1}<\cdots <q_{s}\leq z}G(q_{1}^{\alpha_{1}})\cdots G(q_{s}^{\alpha_{s}})\nonumber \\
&=\frac{1}{m(\beta_{1})!\cdots m(\beta_{d})!}
\sum_{\begin{subarray}{c}q_{1},\ldots, q_{s}\leq z\\ distinct\\ ((\alpha_{1},\ldots, \alpha_{s}): \textit{a representative of $N_{l}(k,s)$)}\end{subarray}}
G(q_{1}^{\alpha_{1}})\cdots G(q_{s}^{\alpha_{s}}). \label{NKS-wa-2}
\end{align}
For each representative $(\alpha_{1},\ldots, \alpha_{s})\in N_{l}(k,s)$, we shall
put labels $i(l,l^{\prime})$ on indexes $i$ of $\alpha_{i}$ as
\begin{align*}
\beta_{1}&= \alpha_{i(1,1)}, \alpha_{i(1,2)}, \ldots, \alpha_{i(1,m(\beta_{1}))},\\
\beta_{2}&= \alpha_{i(2,1)}, \alpha_{i(2,2)}, \ldots, \alpha_{i(2,m(\beta_{2}))},\\
&\cdots\\
\beta_{d}&= \alpha_{i(d,1)}, \alpha_{i(d,2)}, \ldots, \alpha_{i(d,m(\beta_{d}))}.
\end{align*}
Let $S_{s}$ be the set of permutations of $\{1,2,\ldots, s\}$. For any $\sigma \in S_{s}$ and $\beta_{j}$ ($j=1,\ldots, d$) we shall set
\begin{align*}
L(\sigma, \beta_{j}):=\{ \nu \, |\, \nu=\sigma^{-1}(i(j,1)), \sigma^{-1}(i(j,2)),\ldots, \sigma^{-1}(i(j,m(\beta_{j})))\}.
\end{align*}
Here, we shall remark that for a $N_{l}(k,s)$ if $\sigma$ and $\sigma^{\prime}\in S_{s}$ satisfy
\begin{align}\label{Id-NKS}
L(\sigma, \beta_{1})=L(\sigma^{\prime},\beta^{\prime}), \ldots, L(\sigma, \beta_{d})=L(\sigma^{\prime}, \beta_{d}),
\end{align}
then
\begin{align*}
\sum_{q_{\sigma(1)}<\cdots <q_{\sigma(s)}\leq z}G(q_{1}^{\alpha_{1}})\cdots G(q_{s}^{\alpha_{s}})
=\sum_{q_{\sigma^{\prime}(1)}<\cdots <q_{\sigma^{\prime}(s)}\leq z}G(q_{1}^{\alpha_{1}})\cdots G(q_{s}^{\alpha_{s}}).
\end{align*}
We shall define an equivalence relation $\simeq_{2}$ on $S_{s}$ with respect to a class $N_{l}(k,s)$, for $\sigma$ and $\sigma^{\prime}\in S_{s}$,
if the $d$ many equalities (\ref{Id-NKS}) hold then we write $\sigma \simeq_{2} \sigma^{\prime}$. We have the partition of $S_{s}$ as
\begin{align*}
S_{s}=\bigsqcup_{j=1}^{\frac{s!}{m(\beta_{1})!\cdots m(\beta_{d})!}}T_{j}(\beta_{1},\ldots, \beta_{d}; N_{l}(k,s)).
\end{align*}
Note that $\sharp T_{j}(\beta_{1}, \ldots, \beta_{d}; N_{l}(k,s))=m(\beta_{1})!\cdots m(\beta_{d})!$.
Denote by $T(\beta_{1},\ldots, \beta_{d};N_{l}(k,s))$ the complete system of representatives of $S_{s}$ by the relation $\simeq_{2}$, with respect to a class $N_{l}(k,s)$. Then, we observe that
\begin{align*}
&\sum_{\begin{subarray}{c}q_{1}, \ldots, q_{s}\leq z\\ distinct \\ ((\alpha_{1},\ldots, \alpha_{s}): \textit{a representative of $N_{l}(k,s)$})\end{subarray}}
G(q_{1}^{\alpha_{1}})\cdots G(q_{s}^{\alpha_{s}})\\
&=\sum_{\sigma \in S_{s}}\sum_{q_{\sigma(1)}<\cdots <q_{\sigma(s)}\leq z}G(q_{1}^{\alpha_{1}})\cdots G(q_{s}^{\alpha_{s}})\\
&= m(\beta_{1})!\cdots m(\beta_{d})!\sum_{\tau\in T(\beta_{1}, \ldots, \beta_{d}; N_{l}(k,s))}
    \sum_{q_{\tau(1)}<\cdots <q_{\tau(s)}\leq z}G(q_{1}^{\alpha_{1}})\cdots G(q_{s}^{\alpha_{s}})\\
&=m(\beta_{1})!\cdots m(\beta_{d})!\sum_{\forall (\alpha_{1}, \ldots, \alpha_{s})\in N_{l}(k,s)}\sum_{q_{1}<\cdots <q_{s}\leq z}G(q_{1}^{\alpha_{1}})\cdots G(q_{s}^{\alpha_{s}}),
\end{align*}
which proves (\ref{NKS-wa-2}). Therefore, combining (\ref{NKS-wa-1}) and (\ref{NKS-wa-2}) we reach the assertion of Proposition \ref{K-prop-2}. 
\end{proof}

Finally, we shall derive Theorem \ref{K-theorem} from Propositions \ref{K-prop-1} and \ref{K-prop-2}.
\begin{proof}[Proof of Theorem \ref{K-theorem}]
Recall the assumption that $k\geq 2$ is a fixed integer and $z\leq x^{1/k}$. Then, by (\ref{23456-formula}) and (\ref{23456-error}) we have
\begin{align}
&\sum_{n\leq x}(\omega_{z}(n)-\log\log z)^{k}\nonumber \\
&=\sum_{l=2}^{k}\binom{k}{l}B^{k-l}\sum_{n\leq x}\left(\sum_{p\leq z}f_{p}(n)\right)^{l} +B^{k}x +O(\pi(z))\nonumber\\
&\quad +\begin{cases}
O\left(\frac{x(\log\log z)^{\frac{k-2}{2}}}{\log z}\right) & (k\geq 2, even)\\
O\left(\frac{x(\log\log z)^{\frac{k-1}{2}}}{\log z}\right) & (k\geq 3, odd)
\end{cases}.\label{aoaoao}
\end{align}
Next, we shall investigate $\sum_{n\leq x}\left(\sum_{n\leq x}f_{p}(n)\right)^{k}$. In (\ref{hoshi-6}),
using the estimate $\pi(z)\ll \frac{z}{\log z}$ we see that
\begin{align}\label{131313}
\sum_{n\leq x}\left(\sum_{p\leq z}f_{p}(n)\right)^{k}
=x\left(\sum_{\begin{subarray}{c}p_{1},\ldots, p_{k}\leq z\\ p_{1}\cdots p_{k}: \textit{square-full}\end{subarray}}G(p_{1}\cdots p_{k})\right)+O\left(\frac{x}{\log z}\right),
\end{align}
where the $O$-constant depends on $k$, but $k\geq 2$ is fixed. In (\ref{I-wa}), we shall apply Proosition \ref{K-prop-2} at fisrt, and Propositon \ref{K-prop-1} in the next,
then,  
\begin{align*}
&\sum_{\begin{subarray}{c}p_{1},\ldots, p_{k}\leq z\\ p_{1}\cdots p_{k}: \textit{square-full}\end{subarray}}G(p_{1}\cdots p_{k})\\
&=\sum_{1\leq s \leq k/2}\left(\sum_{l=1}^{n(k,s)}\frac{c(N_{l}(k,s))}{m(\beta_{1})!\cdots m(\beta_{d})!}
\hspace{-1cm}\sum_{\begin{subarray}{c}q_{1}, \ldots, q_{s}\leq z\\ distinct \\ ((\alpha_{1},\ldots, \alpha_{s}): \textit{a representative of $N_{l}(k,s)$})\end{subarray}}
\hspace{-1cm}G(q_{1}^{\alpha_{1}})\cdots G(q_{s}^{\alpha_{s}})\right)\\
&=\sum_{j=0}^{\left[\frac{k}{2}\right]}\mathcal{L}_{j}^{\prime}(\log\log z)^{j}+O\left(\frac{(\log\log z)^{\left[\frac{k}{2}\right]-1}}{\log z}\right).
\end{align*}
Combining the above and (\ref{131313}) we obtain
\begin{align*}
\sum_{n\leq x}\left(\sum_{p\leq z}f_{p}(n)\right)^{k}
=x\sum_{j=0}^{\left[\frac{k}{2}\right]}\tilde{a}_{j}(\log\log z)^{j}+O\left(\frac{x(\log\log z)^{\left[\frac{k}{2}\right]-1}}{\log z}\right),
\end{align*}
where $\tilde{a}_{j}$ are constants. Replacing $k$ with $l\, (l\leq k)$ in the above, and applying them to (\ref{aoaoao})
we obtain the form (\ref{K-theorem-12}). To obtain the expression (\ref{K-theorem-22}), we shall use Proposition \ref{K-prop-2}.
If $k\geq 2$ is even, then we have for $a_{\left[\frac{k}{2}\right]}=a_{\frac{k}{2}}$ in (\ref{K-theorem-12}) that
\begin{align*}
a_{\frac{k}{2}}=\sum_{j=1}^{n\left(k, \frac{k}{2}\right)}\frac{c\left(N_{j}\left(k,\frac{k}{2}\right)\right)}{m(\beta_{1})!\cdots m(\beta_{d})!}
=\frac{c\left(N\left(k,\frac{k}{2}\right)\right)}{\left(\frac{k}{2}\right)!}=\frac{k!}{\left(\frac{k}{2}\right)!2^{k/2}}=C_{k}.
\end{align*}
If $k\geq 3$ is odd (note that (\ref{aoaoao})), then in (\ref{K-theorem-12}) we have
\begin{align*}
&a_{\left[\frac{k}{2}\right]}=a_{\frac{k}{2}-\frac{1}{2}}\\
&=\sum_{j=1}^{n\left(k,\frac{k-1}{2}\right)}\frac{c\left(N_{j}\left(k,\frac{k-1}{2}\right)\right)}{m(\beta_{1})!\cdots m(\beta_{d})!}
  +kB\sum_{j=1}^{n\left(k-1,\frac{k-1}{2}\right)}\frac{c\left(N_{j}\left(k-1,\frac{k-1}{2}\right)\right)}{m(\beta_{1})!\cdots m(\beta_{d})!}\\
&=\frac{c\left(N\left(k,\frac{k-1}{2}\right)\right)}{1!\left(\frac{k-3}{2}\right)!}+kBC_{k-1}\\
&=\frac{k!}{3!2^{\frac{k-3}{2}}\left(\frac{k-3}{2}\right)!}+kBC_{k-1}\\
&=\tilde{C}_{k}+kBC_{k-1}.
\end{align*}
We see that the expression (\ref{K-theorem-22}) is consistent with leading coefficient of (a) and (b)-(I) in Theorem \ref{Hatsumi-1}, respectively.
\end{proof}

\section{A framework of Theorem \ref{Hatsumi-1}}
In this final section of this notes, we shall attempt to find a framework of Theorem \ref{Hatsumi-1}.
Our concern is what causes the appearence of the main term in Theorem \ref{Hatsumi-1}.

Let $\mathcal{P}$ be a set of distinct primes, and define $\omega_{\mathcal{P}}(n)$ by

\begin{align}\label{def-omega-P}
\omega_{\mathcal{P}}(n):=\sum_{\begin{subarray}{c}p|n\\ p\in\mathcal{P}\end{subarray}}1
\end{align}
which is a generalization of $\omega(n)=\sum_{p|n}1$. It is natural to ask a result corresponding to Theorem \ref{GS-th-1}  
(\cite[Theorem 1]{GS}) on $\omega_{\mathcal{P}}(n)$. 
The method of the proof of Theorem \ref{GS-th-1} is formulated, actual, in a sieve like setting, by Granville and Soundararajan, 
which is stated as Proposition 3 in \cite[p.~21]{GS}. 
In the proposition for odd integers $k\geq 3$, the `main term' does not appear. As (b) of Theorem \ref{Hatsumi-1}, 
it might be interesting to find the `main term' in Proposition 3 (in the case of odd integers $k$).

First, to state the formulation of Theorem \ref{GS-th-1} by Granville and Soundararajan we shall fix the notation and assumptions.

Let $x\geq 1$ be a real number and $\mathcal{A}(x)=\{a_{n}\}$ a sequence which may depend on $x$ of natural numbers those are not necessarily to distinct.
We may suppose that $\sharp \mathcal{A}(x)=\sum_{a_{n}\in\mathcal{A}(x)}1 \to \infty$, as $x\to \infty$.
Let $z\geq 1$ be another parameter, and $\mathcal{P}(z)$ a set of distinct primes defined the parameter. Also, we shall suppose $\sharp \mathcal{P}(z)=\sum_{p\in\mathcal{P}(z)}1 \to\infty$, as $z\to\infty$. Moreover, we shall assume some properties for $\mathcal{A}(x)$ and $\mathcal{P}(z)$.

For any square-free integer $d\geq 1$ and any real $x\geq 1$ we write
\begin{align*}
\mathcal{A}_{d}(x):=\{a_{n}\in\mathcal{A}(x)\ :\ d|a_{_{n}}\}.
\end{align*}
Suppose the properties (H1) and (H2), below, and we shall define two functions $\mu_{\mathcal{P}}(z)$ and $\sigma_{\mathcal{P}}(z)$
(these functions are introduced in \cite[p.~20]{GS}).

\begin{enumerate}
\item[\bf (H1)] {\it For any $x\geq 1$ and any square-free integer $d\geq 1$, $\sharp\mathcal{A}_{d}(x)=\sum_{a_{n}\in\mathcal{A}_{d}(x)}1$
is `approximated' as
\begin{align*}
\sharp \mathcal{A}_{d}(x)=\frac{h(d)}{d}N(x) +r_{d}(x),
\end{align*}
here $h(n)$ is a multiplicative function satisfying 
\begin{align*}
0\leq h(n) \leq n \quad (\text{for any $n\geq 1$}),
\end{align*}
and $N(x)$ is a positive monotonic increasing function.}
\end{enumerate}

On the basis of the function $h(n)$ which is recognized in the assumption (H1), we shall define $\mu_{\mathcal{P}}(z)$ and $\sigma_{\mathcal{P}}(z)$ for $z \geq 1$: 
\begin{align*}
& \mu_{\mathcal{P}}(z):=\sum_{p\in\mathcal{P}(z)}\frac{h(p)}{p},\\
& \sigma_{\mathcal{P}}(z):=\left(\sum_{p\in \mathcal{P}(z)}\frac{h(p)}{p}\left(1-\frac{h(p)}{p}\right)\right)^{1/2}.
\end{align*}
Note that
\begin{align*}
0 \leq \sigma_{\mathcal{P}}(z)^{2} \leq \mu_{\mathcal{P}}(z)\leq \sharp\mathcal{P}(z).
\end{align*}

\begin{enumerate}
\item[\bf (H2)] {\it $\sigma_{\mathcal{P}}(z) \to\infty$ as $z\to\infty$.}
\end{enumerate} 

Moreover, we assume the relation the natural number $k$ and $\sigma_{\mathcal{P}}(z)$ as follows.
\begin{enumerate}
\item[\bf (H3)] {\it For any sufficiently large $z\geq 1$, 
the inequality $k\leq \sigma_{\mathcal{P}}(z)^{2/3}$ is satisfied.
}
\end{enumerate}
If $k$ is fixed, then (H3) is derived from (H2). However, recall that $k$ is not fixed in Theorems \ref{GS-th-1} and \ref{Hatsumi-1}.
Actually, (H3) is required in Proposition 3 in \cite[p.~21]{GS}. 

Henceforth, on the hypothes (H1)--(H3) and some additional assumptions, 
we shall investigate $\omega_{\mathcal{P}}(a)$ of (\ref{def-omega-P}) for $a\in\mathcal{A}(x)$ and $\mathcal{P}=\mathcal{P}(z)$.
We may state Proposition 3 in \cite{GS} as follows.

\begin{proposition}[{\cite[p.~21, Proposition 3]{GS}}]\label{GS-Prop-3}
Let $k\geq 2$ be any integer, $x\geq 1$ and $z\geq 1$ be sufficiently large real number. 
Under the hypotheses (H1)--(H3), we have, uniformly in $k$, $z$, and $x$,
the following estimates:
\begin{enumerate}
\item[\rm (a)]If $k\geq 2$ is even, then
\begin{align*}
&\sum_{a\in\mathcal{A}(x)}(\omega_{\mathcal{P}}(a)-\mu_{\mathcal{P}}(z))^{k}\\
&=C_{k}N(x) \sigma_{\mathcal{P}}(z)^{k}\left(1+O\left(\frac{k^{3}}{\sigma_{\mathcal{P}}(z)^{2}}\right)\right)
+O\left(\mu_{\mathcal{P}}(z)^{k}\sum_{d\in\mathcal{D}_{k}(\mathcal{P})}|r_{d}(x)|\right).
\end{align*}
\item[\rm (b)]If $k\geq 3$ is odd, then
\begin{align*}
&\sum_{a\in\mathcal{A}(x)}(\omega_{\mathcal{P}}(a)-\mu_{\mathcal{P}}(z))^{k}\\
&=O\left(k^{3/2}C_{k}N(x)\sigma_{\mathcal{P}}(z)^{k-1}\right)
+O\left(\mu_{\mathcal{P}}(z)^{k}\sum_{d\in\mathcal{D}_{k}(\mathcal{P})}|r_{d}(x)|\right).
\end{align*}
\end{enumerate}
Here, $C_{k}$ is defined by (\ref{teigi-C-j}), and $\mathcal{D}_{k}(\mathcal{P})$ denotes the set of all integers, including 1, which are the product of at most $k$ primes in $\mathcal{P}(z)$. 
\end{proposition}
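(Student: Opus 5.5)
The plan is to follow the proof of Lemma \ref{Natsuki-1} verbatim, replacing the integers $n\leq x$ by the weighted sequence $\mathcal{A}(x)$ and the Mertens sums by $\mu_{\mathcal{P}}(z)$ and $\sigma_{\mathcal{P}}(z)$.

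\textbf{Step 1: reduction to an inner sum.} For each $p\in\mathcal{P}(z)$ define
\begin{align*}
g_{p}(a)=1-\frac{h(p)}{p} \ \text{ if } p\mid a,\qquad g_{p}(a)=-\frac{h(p)}{p} \ \text{ otherwise}.
\end{align*}
Then $\omega_{\mathcal{P}}(a)-\mu_{\mathcal{P}}(z)=\sum_{p\in\mathcal{P}(z)}g_{p}(a)$ holds exactly, so no binomial correction terms appear, unlike (\ref{AL-25}). Expanding the $k$th power gives
\begin{align*}
\sum_{p_{1},\ldots,p_{k}\in\mathcal{P}(z)}\ \sum_{a\in\mathcal{A}(x)}g_{p_{1}}(a)\cdots g_{p_{k}}(a).
\end{align*}
Put $r=p_{1}\cdots p_{k}$ with kernel $R$. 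Sort the $a$ by $d=(a,R)$ and use M\"obius inversion together with (H1). The inner sum then becomes
\begin{align*}
N(x)\,G_{h}(r)+O\Big(\sum_{d\mid R}\ \sum_{\delta\mid R/d}|r_{d\delta}(x)|\Big),
\end{align*}
where
\begin{align*}
G_{h}(r)=\sum_{d\mid R}g_{p_{1}\cdots p_{k}}(d)\,\frac{h(d)}{d}\prod_{q\mid R/d}\Big(1-\frac{h(q)}{q}\Big).
\end{align*}
The analogue of Lemma \ref{G(r)-lemma}(ii) is multiplicativity of $G_{h}$, with
\begin{align*}
G_{h}(q^{\alpha})=\frac{h(q)}{q}\Big(1-\frac{h(q)}{q}\Big)^{\alpha}+\Big(-\frac{h(q)}{q}\Big)^{\alpha}\Big(1-\frac{h(q)}{q}\Big).
\end{align*}
Consequently $G_{h}(r)=0$ unless $r$ is square-full, and $G_{h}(q^{2})=\frac{h(q)}{q}(1-\frac{h(q)}{q})$. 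Since $|g_{p}|\leq 1$, the remainder sum is bounded by $\mu_{\mathcal{P}}(z)^{k}\sum_{d\in\mathcal{D}_{k}(\mathcal{P})}|r_{d}(x)|$, after counting tuples by their kernel; this counting needs a little care.

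\textbf{Step 2: grouping by $s=\omega(r)$.} As in (\ref{I-wa}), write the square-full sum as $\sum_{s\leq k/2}I_{s}(k)$ with multinomial weights.

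For even $k$, the top term is $I_{k/2}(k)$, a sum over distinct $q$'s of $\frac{k!}{2^{k/2}(k/2)!}\prod G_{h}(q_{j}^{2})$. This is $C_{k}\sigma_{\mathcal{P}}^{k}$ minus the diagonal contributions. The diagonal terms are bounded by $C_{k}k^{2}\sigma^{k-2}$, because $\sum G_{h}(q^{2})^{2}\leq\sigma^{2}$.

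For odd $k$, the $\sigma$-power is at most $\sigma^{k-1}$, giving the $O(k^{3/2}C_{k}N\sigma^{k-1})$ claim. Here one needs $|G_{h}(q^{\alpha})|\leq G_{h}(q^{2})$ for $\alpha\geq 2$. This follows from
\begin{align*}
|G_{h}(q^{\alpha})|\leq \frac{h}{q}\Big(1-\frac{h}{q}\Big)\Big[\Big(1-\frac{h}{q}\Big)^{\alpha-1}+\Big(\frac{h}{q}\Big)^{\alpha-1}\Big]
\end{align*}
and $t^{m}+(1-t)^{m}\leq1$. Importantly, this replaces $\sum_{p}1/p$ in the bound (\ref{atsui-atsui}) by $\sigma^{2}$ rather than $\mu$, which is why $\sigma$ is the natural scale. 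No positivity hypothesis like (\ref{zenshin-H4}) is needed here, since absolute values suffice for an upper bound.

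\textbf{Step 3: the lower-order $s$.} Bound $\sum_{s<\lfloor k/2\rfloor}|I_{s}(k)|$ by
\begin{align*}
\sum_{s}\frac{k!}{2^{s}s!}\binom{k-s-1}{s-1}\sigma^{2s}.
\end{align*}
Then repeat the Gamma-function bookkeeping of (\ref{atsui-atsui-2})--(\ref{al-12-9-19}), with $\log\log z$ replaced by $\sigma^{2}$. Hypothesis (H3), $k^{3}\leq\sigma^{2}$, makes the geometric series converge, yielding relative errors $k^{3}/\sigma^{2}$ for even $k$ and the bound $k^{3/2}C_{k}\sigma^{k-1}$ for odd $k$.

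I expect this combinatorial estimate, uniform in $k$, together with handling the diagonal removal and the remainder count in Step 1, to be the main obstacle; the rest is routine transcription.
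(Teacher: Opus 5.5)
Your treatment of the main term is sound. The identity $\omega_{\mathcal{P}}(a)-\mu_{\mathcal{P}}(z)=\sum_{p}g_p(a)$, the multiplicative formula for $G_h$, the bound $|G_h(q^{\alpha})|\le G_h(q^2)$, and the Gamma bookkeeping for the lower-order $s$ with $\sigma_{\mathcal{P}}(z)^2$ in place of $\log\log z$ all match the paper's Lemma \ref{lemma-III}, Remark \ref{remark-ue-1} and Lemma \ref{lemma-IV-prime}. Your union-bound removal of the diagonal in $I_{k/2}(k)$ costs at most $\binom{k/2}{2}\sum_q G_h(q^2)^2\,\sigma_{\mathcal{P}}(z)^{k-4}\ll k^2\sigma_{\mathcal{P}}(z)^{k-2}$. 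This is actually better suited to the statement than the paper's lower bound, which invokes monotonicity hypotheses (H5$^{\prime}$), (H6$^{\prime}$) that the proposition does not assume.

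The gap is the remainder in Step 1. Once you bound $|g_{p_1\cdots p_k}(d)|\le 1$, the per-tuple error $\sum_{d\mid R}\sum_{\delta\mid R/d}|r_{d\delta}(x)|$ carries no decaying weight. Summed over all $k$-tuples from $\mathcal{P}(z)$, the term $d=\delta=1$ alone gives $(\sharp\mathcal{P}(z))^k|r_1(x)|$. More generally, each $m=d\delta$ with $\omega(m)=t$ is hit by $\gg(\sharp\mathcal{P}(z)-k)^{k-t}$ tuples. Since $\sharp\mathcal{P}(z)$ can be far larger than $\mu_{\mathcal{P}}(z)$ (compare $\pi(z)$ with $\log\log z$ for the integers), no way of counting tuples by their kernel turns this into $\mu_{\mathcal{P}}(z)^k$.

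The missing idea, used in Lemma \ref{lemma-V}, is to keep the factor $h(p_i)/p_i$ that $g_{p_i}(d)$ contributes whenever $p_i\nmid d$. Group by $m=d\delta$ and use $(1-t)^{\alpha}+t^{\alpha}\le 1$ for the primes of $m$:
\[
|\mathcal{E}(r)|\le\sum_{m\mid R}|r_m(x)|\sum_{d\mid m}|g_{p_1\cdots p_k}(d)|\le\sum_{m\mid R}|r_m(x)|\prod_{q^{\alpha}\| r,\ q\nmid m}\Bigl(\frac{h(q)}{q}\Bigr)^{\alpha}.
\]
Now fix $m\in\mathcal{D}_k(\mathcal{P})$. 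Each coordinate $p_i$ is either one of the $\omega(m)\le k$ primes of $m$ (weight at most $1$) or any $p\in\mathcal{P}(z)$ with weight $h(p)/p$. So the tuples contribute at most $(k+\mu_{\mathcal{P}}(z))^k$. By (H3) and $\sigma_{\mathcal{P}}(z)^2\le\mu_{\mathcal{P}}(z)$ we have $k^2\le\mu_{\mathcal{P}}(z)^{2/3}$, hence $(1+k/\mu_{\mathcal{P}}(z))^k\le e$. Without this weighting, Step 1 does not give the stated error term $O\bigl(\mu_{\mathcal{P}}(z)^k\sum_{d\in\mathcal{D}_k(\mathcal{P})}|r_d(x)|\bigr)$, so neither (a) nor (b) is established as stated.
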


\begin{remark} It is clear that the hypotheses (H1) and (H3) are required in \cite[p.~21, Proposition 3]{GS}, but, the necessity of (H2) might be unclear in \cite{GS}.
Note that the hypothesis (H2) is used to find the lower bound in \cite[p.~22, line 13]{GS}.
\end{remark}

\begin{remark} In (a) of Proposition \ref{GS-Prop-3}, 
if $k \asymp \sigma_{\mathcal{P}}(z)^{2/3}$, the `main term' $C_{k}N(x)\sigma_{\mathcal{P}}(z)^{k}$ of the result (a) is absorbed by the $O$-term. 
\end{remark}

We shall attempt to take a `main term' out of the estimate (b) in Proposition \ref{GS-Prop-3}, as we have shown Theorem \ref{Hatsumi-1} (b). To this end, probably,
we have to ask more four hypotheses (H4)--(H7):

\begin{enumerate}
\item[\bf (H4)] {\it For any prime $p$ in $\mathcal{P}(z)$, the inequalities $0\leq \frac{h(p)}{p}\leq \frac{1}{2}$ are satisfied.}
\item[\bf (H5)] {\it For any sufficiently large $z\geq 1$,  there exists a prime $p^{*}\in\mathcal{P}(z)$ satisfying
                 \begin{align*}
                 \frac{h(p)}{p}\left(1-\frac{h(p)}{p}\right)\left(1-2\frac{h(p)}{p}\right)\quad ({\it for}\ p\geq p^{*}\ {\it and}\ p\in\mathcal{P}(z))
                 \end{align*}
                 is monotonic decreasing.

Furthermore, the set $\{p\in\mathcal{P}(z)\,|\, p\geq p^{*}\}$ has at least $\frac{k-1}{2}$ distinct primes.
}

\item[\bf (H6)] {\it Let $\pi_{l}(\mathcal{P})$ denote the $l$th smallest prime in $\mathcal{P}(z)$. For the prime $p^{*}$ in (H5), 
we write $p^{*}=\pi_{l^{*}}(\mathcal{P})$. As for the number $l^{*}$, the following properties are satisfied:
\begin{align*}
\pi_{l^{*}+\frac{k-3}{2}}(\mathcal{P})\in \mathcal{P}(z),\quad \text{and} \quad l^{*}=O(k).
\end{align*}}
\item[\bf (H7)] {\it Let
\begin{align*}
\tilde{\sigma}_{\mathcal{P}}(z):=\left(\sum_{p\in\mathcal{P}(z)}\left(\frac{h(p)}{p}\right)^{2}\left(1-\frac{h(p)}{p}\right)\right)^{1/2}.
\end{align*}
There exists an absolute constant $c_{1}>0$ satisfying $\tilde{\sigma}_{\mathcal{P}}(z)\leq c_{1}$.}
\end{enumerate}

\bigskip

Under the hypotheses (H1)--(H7), in the case of odd integer $k \geq 3$, we find a main term in (b) of Proposition \ref{GS-Prop-3}, as follows.
\begin{proposition}\label{GS-prop-3-odd}
Let $k\geq 3$ be an odd integer. 
For sufficiently large $x\geq 1$ and $z\geq 1$,
on the hypotheses (H1)--(H7), we have, uniformly in $k$, $z$, and $x$,
\begin{align*}
\sum_{a\in\mathcal{A}(x)}(\omega_{\mathcal{P}}(a)-\mu_{\mathcal{P}}(z))^{k}
&=\tilde{C}_{k}N(x) \sigma_{\mathcal{P}}(z)^{k-1}
+O\left((k^{3/2}C_{k})N(x)\sigma_{\mathcal{P}}(z)^{k-3}k^{3}\right)\\
&\quad +O\left(\mu_{\mathcal{P}}(z)^{k}\sum_{d\in\mathcal{D}_{k}(\mathcal{P})}|r_{d}(x)|\right).
\end{align*}
Here, $\tilde{C}_{k}$ is defined by (\ref{teigi-C-j-tilde}), and  $\mathcal{D}_{k}(\mathcal{P})$ denotes the set of all integers, including 1, 
which are the product of at most $k$ primes in $\mathcal{P}(z)$.
\end{proposition}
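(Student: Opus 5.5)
We follow the proof of Lemma \ref{Natsuki-1}, replacing $1/p$ by $g(p):=h(p)/p$. For $p\in\mathcal{P}(z)$ we put $f_{p}(a)=1-g(p)$ if $p\mid a$ and $f_{p}(a)=-g(p)$ otherwise. Then $\omega_{\mathcal{P}}(a)-\mu_{\mathcal{P}}(z)=\sum_{p\in\mathcal{P}(z)}f_{p}(a)$ holds exactly, so no binomial expansion as in (\ref{AL-25}) is needed. Expanding the $k$th power gives a sum over $p_{1},\ldots,p_{k}\in\mathcal{P}(z)$ of $\sum_{a}f_{p_{1}\cdots p_{k}}(a)$.

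Let $R$ be the kernel of $r=p_{1}\cdots p_{k}$. Grouping the $a$ according to $(a,R)=d$ and using Möbius inversion together with (H1), the count of $a$ with $(a,R)=d$ becomes $N(x)\prod_{p\mid d}g(p)\prod_{p\mid R/d}(1-g(p))$ plus a combination of the remainders $r_{e}(x)$ with $e\mid R$. Summing over the tuples then gives
\begin{align*}
N(x)\sum_{\substack{p_{1},\ldots,p_{k}\in\mathcal{P}(z)\\ r\ \text{square-full}}}G_{h}(r)+O\Big(\mu_{\mathcal{P}}(z)^{k}\sum_{d\in\mathcal{D}_{k}(\mathcal{P})}|r_{d}(x)|\Big),
\end{align*}
exactly as in \cite{GS}, Proposition 3. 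Here $G_{h}$ is multiplicative with
\begin{align*}
G_{h}(q^{\alpha})=g(1-g)^{\alpha}+(-g)^{\alpha}(1-g),\qquad g=g(q).
\end{align*}
In particular $G_{h}(q)=0$, which kills the non-square-full tuples. Moreover $G_{h}(q^{2})=g(1-g)$ and $G_{h}(q^{3})=g(1-g)(1-2g)$. By (H4), $0\le G_{h}(q^{\alpha})\le g(1-g)\le g$ for every $\alpha\ge 2$; this is the analogue of Lemma \ref{G(r)-lemma} (iv), with $g\le 1/2$ replacing (\ref{zenshin-H4}).

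Next we split the square-full sum as in (\ref{I-wa}) into $I_{\frac{k-1}{2}}(k)+\sum_{s\le\frac{k-3}{2}}I_{s}(k)$. For the top term, (\ref{a-hoshi-da}) again gives the combinatorial factor $\tilde{C}_{k}$. Using $G_{h}(q^{3})\le G_{h}(q^{2})$ we get the upper bound $\tilde{C}_{k}\sigma_{\mathcal{P}}(z)^{k-1}$.

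For the lower bound we restrict all primes to $p\ge p^{*}$ and use the monotonicity in (H5). Peeling off the primes one at a time as in (\ref{hachi-odd-2}), and using (H6) to guarantee that enough primes remain, we obtain
\begin{align*}
I_{\frac{k-1}{2}}(k)\ \ge\ \tilde{C}_{k}\Big(\sum_{\substack{p\in\mathcal{P}(z)\\ p\ge \pi_{l^{*}+\frac{k-3}{2}}(\mathcal{P})}}G_{h}(p^{3})\Big)^{\frac{k-1}{2}}.
\end{align*}
We then compare the inner sum with $\sigma_{\mathcal{P}}(z)^{2}$. The identity
\begin{align*}
G_{h}(p^{3})=g(1-g)-2g^{2}(1-g)
\end{align*}
together with (H7) shows that the full sum of $G_{h}(p^{3})$ differs from $\sigma_{\mathcal{P}}(z)^{2}$ by at most $2c_{1}^{2}$. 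The discarded primes number $l^{*}+O(k)=O(k)$ by (H6), and each contributes at most $1/4$. Hence the inner sum is $\sigma_{\mathcal{P}}(z)^{2}+O(k)$. By (H3) the ratio $k/\sigma_{\mathcal{P}}(z)^{2}$ is small, so raising to the power $\frac{k-1}{2}$ gives
\begin{align*}
I_{\frac{k-1}{2}}(k)=\tilde{C}_{k}\sigma_{\mathcal{P}}(z)^{k-1}+O\big(\tilde{C}_{k}\sigma_{\mathcal{P}}(z)^{k-3}k^{2}\big).
\end{align*}

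The lower-order terms are where we expect the main obstacle. Copying the proof of (\ref{al-12-9-19}), we would bound $G_{h}(q_{1}^{\alpha_{1}}\cdots q_{s}^{\alpha_{s}})$ by $\prod g(q_{i})$, which yields powers of $\mu_{\mathcal{P}}(z)$ instead of $\sigma_{\mathcal{P}}(z)^{2}$. This would be fatal, since $\mu_{\mathcal{P}}(z)$ need not be comparable to $\sigma_{\mathcal{P}}(z)^{2}$. Instead we use the sharper bound $G_{h}(q^{\alpha})\le g(1-g)$ from (H4). Then the sum over $q_{1}<\cdots<q_{s}$ is at most $\sigma_{\mathcal{P}}(z)^{2s}/s!$. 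The same factorial manipulation as in (\ref{atsui-atsui})--(\ref{atsui-atsui-2}) then goes through with $\log\log z$ replaced by $\sigma_{\mathcal{P}}(z)^{2}$, using (H3) for the convergence of $\sum_{j}(k^{3}/\sigma_{\mathcal{P}}(z)^{2})^{j}/(2j+3)!$. This gives
\begin{align*}
\sum_{s\le\frac{k-3}{2}}I_{s}(k)\ll k^{9/2}C_{k}\sigma_{\mathcal{P}}(z)^{k-3}.
\end{align*}
Finally, since $\tilde{C}_{k}\asymp k^{3/2}C_{k}$, both error terms are $O\big((k^{3/2}C_{k})N(x)\sigma_{\mathcal{P}}(z)^{k-3}k^{3}\big)$. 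Together with the remainder term this is the assertion.
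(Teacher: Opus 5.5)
Your proposal is correct and follows the paper's proof essentially step for step. The steps match as follows: the exact identity $\omega_{\mathcal{P}}(a)-\mu_{\mathcal{P}}(z)=\sum_{p\in\mathcal{P}(z)}f_{p}(a)$ (Lemma~\ref{lemma-I}); the split into $N(x)\sum\mathcal{G}(r)$ plus the remainder term (Lemmas~\ref{lemma-II} and~\ref{lemma-V}); the two-sided bounds for $\mathcal{I}_{\frac{k-1}{2}}(k)$, using $0\le\mathcal{G}(q^{3})\le\mathcal{G}(q^{2})$, peeling under (H5)--(H6) and (H7) to compare with $\sigma_{\mathcal{P}}(z)^{2}$; and the bound $\mathcal{G}(q^{\alpha})\le\frac{h(q)}{q}\bigl(1-\frac{h(q)}{q}\bigr)$ for the lower-order terms, exactly as in Lemma~\ref{lemma-IV} (bounding each discarded $\mathcal{G}(p^{3})$ by $1/4$ instead of $\sqrt{3}/18$ changes nothing).
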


\begin{remark} If $k \asymp \sigma_{\mathcal{P}}(z)^{2/3}$, the `main term' $C_{k}N(x)\sigma_{\mathcal{P}}(z)^{k-1}$ is absorbed by the $O$-term. 
\end{remark}

To prove Proposition \ref{GS-prop-3-odd} we shall show some lemmas.

\begin{lemma}\label{lemma-I}
As for $\mathcal{A}(x)$, $\mathcal{P}(z)$, and the hypotheses (H1), we shall define $f_{p}(a)$ ($a\in\mathcal{A}(x)$, $p\in\mathcal{P}(z)$) by
\begin{align}\label{hori-mina-maki}
f_{p}(a):=\begin{cases}
1-\frac{h(p)}{p}& (p|a)\\
-\frac{h(p)}{p} & (p\nmid a)
\end{cases}.
\end{align}
Also, for any $a\in\mathcal{A}(x)$ and  $p_{1},\ldots, p_{l}\in\mathcal{P}(z)$, we write
\begin{align*}
f_{p_{1}\cdots p_{l}}(a):=f_{p_{1}}(a)\cdots f_{p_{l}}(a).
\end{align*} 

For $\omega_{\mathcal{P}}(a)$ ($a\in\mathcal{A}(x)$, $\mathcal{P}=\mathcal{P}(z)$) which is defined in (\ref{def-omega-P}), we have
\begin{align}\label{al-41}
\omega_{\mathcal{P}}(a)=\sum_{p\in\mathcal{P}(z)}f_{p}(a)+\mu_{\mathcal{P}}(z).
\end{align}
\end{lemma}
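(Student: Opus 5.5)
The identity (\ref{al-41}) is a pointwise statement about each fixed $a\in\mathcal{A}(x)$, so I will prove it by the same bookkeeping that gave (\ref{ogai}) in Section 3, with $1/p$ replaced by $h(p)/p$ and the set of primes $p\le z$ replaced by $\mathcal{P}(z)$. No asymptotics are involved; $\mu_{\mathcal{P}}(z)=\sum_{p\in\mathcal{P}(z)}h(p)/p$ enters exactly, so there is no Mertens-type error term, unlike in (\ref{ogai}).

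Fix $a\in\mathcal{A}(x)$ and split $\mathcal{P}(z)$ into the primes dividing $a$ and those not dividing $a$. By the definition (\ref{hori-mina-maki}),
\begin{align*}
\sum_{p\in\mathcal{P}(z)}f_{p}(a)
=\sum_{\begin{subarray}{c}p\in\mathcal{P}(z)\\ p|a\end{subarray}}\left(1-\frac{h(p)}{p}\right)
-\sum_{\begin{subarray}{c}p\in\mathcal{P}(z)\\ p\nmid a\end{subarray}}\frac{h(p)}{p}.
\end{align*}
Expanding the first sum gives $\omega_{\mathcal{P}}(a)-\sum_{p\in\mathcal{P}(z),\,p|a}h(p)/p$, by (\ref{def-omega-P}). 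The two sums of $h(p)/p$ then recombine into the full sum over $\mathcal{P}(z)$, which is $\mu_{\mathcal{P}}(z)$. Hence $\sum_{p\in\mathcal{P}(z)}f_{p}(a)=\omega_{\mathcal{P}}(a)-\mu_{\mathcal{P}}(z)$, which is (\ref{al-41}).

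The only point needing attention is that all sums are finite, so they can be rearranged freely. This holds provided $\mathcal{P}(z)$ is finite for each fixed $z$, which is implicit in the setting: $\sharp\mathcal{P}(z)$ is treated as a finite quantity tending to infinity with $z$. Given that, there is no real obstacle, and the statement for the products $f_{p_1\cdots p_l}$ is merely a definition recorded for later use.
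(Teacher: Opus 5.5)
Your proof is correct and is essentially the paper's argument. The paper starts from $\omega_{\mathcal{P}}(a)$, writes $1=\bigl(1-\frac{h(p)}{p}\bigr)+\frac{h(p)}{p}$ for each $p\mid a$ with $p\in\mathcal{P}(z)$, and completes $\sum_{p\mid a}\frac{h(p)}{p}$ to $\mu_{\mathcal{P}}(z)$ by subtracting the terms with $p\nmid a$. That is your split-and-recombine computation run in the opposite direction, and your remark that the identity is exact, with no Mertens-type error term, is accurate.
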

\begin{proof}
As we have seen in (\ref{ogai}), by (\ref{hori-mina-maki}) we obtain the assertion:
\begin{align*}
\omega_{\mathcal{P}}(a)&=\sum_{\begin{subarray}{c}p|a\\ p\in\mathcal{P}(z)\end{subarray}} \left(1-\frac{h(p)}{p}+\frac{h(p)}{p}\right)
=\sum_{\begin{subarray}{c}p|a\\ p\in\mathcal{P}(z)\end{subarray}} \left(1-\frac{h(p)}{p}\right) 
 +\sum_{\begin{subarray}{c}p|a\\ p\in\mathcal{P}(z)\end{subarray}} \frac{h(p)}{p}\\ 
&=\sum_{\begin{subarray}{c}p|a\\ p\in\mathcal{P}(z)\end{subarray}} \left(1-\frac{h(p)}{p}\right)
  +\sum_{p\in\mathcal{P}(z)}\frac{h(p)}{p} -\sum_{\begin{subarray}{c}p \nmid a\\ p\in\mathcal{P}(z)\end{subarray}} \frac{h(p)}{p}\\
&= \sum_{p\in\mathcal{P}(z)}f_{p}(a)+\mu_{\mathcal{P}}(z).
\end{align*}
\end{proof}

Our task is clarified by Lemma \ref{lemma-I}.
Let $k\geq 1$ be any integer. Under the hypothesis (H1), applying (\ref{al-41}) of Lemma \ref{lemma-I} we observe that
\begin{align}
\sum_{a\in\mathcal{A}(x)}(\omega_{\mathcal{P}}(a)-\mu_{\mathcal{P}}(z))^{k}
&=\sum_{a\in\mathcal{A}(x)}\left(\sum_{p\in\mathcal{P}(z)}f_{p}(a)\right)^{k}\nonumber \\
&=\sum_{p_{1},\ldots, p_{k}\in\mathcal{P}(z)}\sum_{a\in\mathcal{A}(x)}f_{p_{1}\cdots p_{k}}(a). \label{hori-maki}
\end{align}

On the right-hand side of (\ref{hori-maki}), for each $k$-tuple $\{p_{1},\ldots, p_{k}\}$ in $\sum_{p_{1},\ldots, p_{k}\in\mathcal{P}(z)}$ we write
$r=p_{1}\cdots p_{k}$.
Express the prime factorization of $r$ as
\begin{align*}
r=\prod_{i=1}^{s}q_{i}^{\alpha_{i}}\quad (\alpha_{i}\geq 1,\ q_{i}: \textit{distinct primes}),
\end{align*}
and put the kernel of $r$ as
\begin{align*}
R:=\prod_{i=1}^{s}q_{i}.
\end{align*}

We define $\mathcal{G}(r)$ and $\mathcal{E}(r)$ for each $\{p_{1},\ldots, p_{k}\}$ in $\sum_{p_{1},\ldots, p_{k}\in\mathcal{P}(z)}$, and $x\geq 1$ 
as follows.
\begin{align}
&\mathcal{G}(r):=\sum_{d|R}f_{p_{1}\cdots p_{k}}(d)\frac{h(d)}{d}\prod_{p|\frac{R}{d}}\left(1-\frac{h(p)}{p}\right),\label{4-def-Gr}\\
&\mathcal{E}(r):=\sum_{d|R}f_{p_{1}\cdots p_{k}}(d)\sum_{e|\frac{R}{d}}\mu(e)r_{de}(x). \label{4-def-Er}
\end{align}
Here, $h(\cdot)$ and $r_{de}(x)$ are admissible by (H1), and $\mu(\cdot)$ is the M{\"o}bius function. Note that
$\mathcal{G}(r)$ is independent of $x$, and $\mathcal{E}(r)$ depends on $x$.
Moreover, remark that for the greatest common divisor $\delta:=(a, R)$ in (\ref{hori-maki}), it holds
\begin{align*}
f_{r}(a)=f_{r}(\delta),
\end{align*}
which is stated in \cite[p.~21]{GS}.

Concerning (\ref{hori-maki}), we have the next lemma.  
\begin{lemma}\label{lemma-II}
Keep the notation  above.
Let $k\geq 1$ be any integer, also $x\geq 1$ and $z\geq 1$ real numbers. On the hypothesis (H1), we have
\begin{align}\label{hoshi}
\sum_{a\in\mathcal{A}(x)}\left(\omega_{\mathcal{P}}(a)-\mu_{\mathcal{P}}(z)\right)^{k}
=N(x) \sum_{p_{1},\ldots, p_{k}\in\mathcal{P}(z)}\mathcal{G}(r)+\sum_{p_{1},\ldots, p_{k}\in\mathcal{P}(z)}\mathcal{E}(r).
\end{align}
\end{lemma}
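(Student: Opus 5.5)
The identity in Lemma \ref{lemma-II} is exact, so the plan is to redo, in the sieve setting, the computation that led to (\ref{hoshi-6}), but without discarding any error term. By (\ref{hori-maki}), which comes from Lemma \ref{lemma-I}, it suffices to show that for each fixed $k$-tuple $p_{1},\ldots,p_{k}\in\mathcal{P}(z)$, with $r=p_{1}\cdots p_{k}$ and kernel $R$,
\begin{align*}
\sum_{a\in\mathcal{A}(x)}f_{p_{1}\cdots p_{k}}(a)=N(x)\mathcal{G}(r)+\mathcal{E}(r).
\end{align*}
Summing this over the $k$-tuples then gives (\ref{hoshi}).

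\textbf{Step 1: group by $\gcd$.} The elements $a\in\mathcal{A}(x)$ are grouped according to $\delta=(a,R)$. Every $f_{p_{i}}(a)$ depends only on whether $p_{i}$ divides $a$, and $p_{i}\mid R$, so $f_{r}(a)=f_{r}(\delta)$. This gives
\begin{align*}
\sum_{a\in\mathcal{A}(x)}f_{r}(a)=\sum_{d\mid R}f_{r}(d)\,\sharp\{a\in\mathcal{A}(x): (a,R)=d\}.
\end{align*}

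\textbf{Step 2: M\"obius inversion.} For $d\mid R$, the condition $(a,R)=d$ means that $d\mid a$ and $(a/d,R/d)=1$. Because $R$ is squarefree, $(d,R/d)=1$, so this is the same as $d\mid a$ together with no prime of $R/d$ dividing $a$. Inverting with the M\"obius function gives
\begin{align*}
\sharp\{a:(a,R)=d\}=\sum_{e\mid R/d}\mu(e)\,\sharp\mathcal{A}_{de}(x).
\end{align*}
Here $de$ is squarefree, so hypothesis (H1) applies and yields $\sharp\mathcal{A}_{de}(x)=\frac{h(de)}{de}N(x)+r_{de}(x)$.

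\textbf{Step 3: the error part.} The $r_{de}(x)$ contributions collect, after summing over $d$ with weight $f_{r}(d)$, into exactly $\mathcal{E}(r)$ as defined in (\ref{4-def-Er}).

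\textbf{Step 4: the main part.} The step needing the only real check is the main term. Since $(d,e)=1$ and $h$ is multiplicative, $h(de)/(de)=\frac{h(d)}{d}\cdot\frac{h(e)}{e}$. Applying multiplicativity of $h$ once more over the squarefree $R/d$,
\begin{align*}
\sum_{e\mid R/d}\mu(e)\frac{h(e)}{e}=\prod_{p\mid R/d}\left(1-\frac{h(p)}{p}\right).
\end{align*}
So the $N(x)$ part equals $N(x)\sum_{d\mid R}f_{r}(d)\frac{h(d)}{d}\prod_{p\mid R/d}\left(1-\frac{h(p)}{p}\right)=N(x)\mathcal{G}(r)$, which is (\ref{4-def-Gr}).

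No step here is a genuine obstacle. The only points to verify are the coprimality of $d$ and $e$ and the squarefreeness of $R$, which justify both the use of (H1) and the Euler-product factorization in Step 4. In the special case $h(n)=n$ on primes, these steps collapse to the $\varphi(R/d)/R$ computation in the proof of Lemma \ref{Natsuki-1}.
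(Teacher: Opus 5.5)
Your proof is correct and follows the paper's argument essentially step for step: group by $(a,R)=d$, use M\"obius inversion to reach $\sum_{e\mid R/d}\mu(e)\,\sharp\mathcal{A}_{de}(x)$, apply (H1) using $(d,e)=1$, and factor $\sum_{e\mid R/d}\mu(e)h(e)/e$ as an Euler product to obtain $\mathcal{G}(r)$ and $\mathcal{E}(r)$. One small slip, in your closing aside only: the specialization that recovers $\varphi(R/d)/R$ is $h\equiv 1$, so that $h(p)/p=1/p$, not $h(n)=n$ on primes.
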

\begin{proof} By (\ref{hori-maki}) and (H1) we observe that
\begin{align*}
&\sum_{a\in\mathcal{A}(x)}\left(\omega_{\mathcal{P}}(a)-\mu_{\mathcal{P}}(z)\right)^{k}
=\sum_{p_{1},\ldots, p_{k}\in\mathcal{P}(z)}\sum_{d|R}\sum_{\begin{subarray}{c}a\in\mathcal{A}(x)\\ (a,R)=d\end{subarray}}f_{p_{1}\cdots p_{k}}(d)\\
&=\sum_{p_{1},\ldots, p_{k}\in\mathcal{P}(z)}\sum_{d|R}f_{p_{1}\cdots p_{k}}(d)\sum_{a\in\mathcal{A}(x)}\sum_{e|\left(\frac{a}{d}, \frac{R}{d}\right),\, d|a}\mu(e)\\
&=\sum_{p_{1}, \ldots, p_{k} \in\mathcal{P}(z)}\sum_{d|R}f_{p_{1}\cdots p_{k}}(d) \sum_{e|\frac{R}{d}}\mu(e) \sharp \mathcal{A}_{de}(x)\\
&=\sum_{p_{1},\ldots, p_{k} \in\mathcal{P}(z)}\sum_{d|R}f_{p_{1}\cdots p_{k}}(d)
  \left(\frac{h(d)}{d}N(x)\sum_{e|\frac{R}{d}}\frac{\mu(e)h(e)}{e} +\sum_{e|\frac{R}{d}}\mu(e)r_{de}(x)\right)\\
& \quad\quad  (\textit{by (H1)})\\
&= N(x)\sum_{p_{1},\ldots, p_{k}\in\mathcal{P}(z)}\mathcal{G}(r) +\sum_{p_{1},\ldots, p_{k}\in\mathcal{P}(z)}\mathcal{E}(r) \quad 
(\textit{by (\ref{4-def-Gr}) and (\ref{4-def-Er})}).
\end{align*}
\end{proof}

To examine the sum of $\mathcal{G}(r)$ in (\ref{hoshi}), we shall arrange properties on $\mathcal{G}(r)$. 
\begin{lemma}\label{lemma-III}
Under the hypothesis (H1), let $r$ and $r^{\prime}$ are products of primes in $\mathcal{P}(z)$.
Then, we have the following properties of $\mathcal{G}(r)$.
\begin{enumerate}
\item[\rm (i)] If $(r,r^{\prime})=1$, then $\mathcal{G}(rr^{\prime})=\mathcal{G}(r)\mathcal{G}(r^{\prime})$.
\item[\rm (ii)] $($\cite[p.~22, (14)]{GS}$)$
\begin{align*}
\mathcal{G}(r)=\prod_{q^{\alpha}||r}\left(\frac{h(q)}{q}\left(1-\frac{h(q)}{q}\right)^{\alpha}+\left(-\frac{h(q)}{q}\right)^{\alpha}\left(1-\frac{h(q)}{q}\right)\right).
\end{align*}
\item[\rm (iii)] $($\cite[p.~22, line 6]{GS}$)$ If $r=p_{1}\cdots p_{k}$ is not square-full, then $\mathcal{G}(r)=0$.
\item[\rm (iv)] Let $q$ be a prime in $\mathcal{P}(z)$. If $\alpha\geq 2$ is an even integer, then
\begin{align*}
0\leq \mathcal{G}(q^{\alpha}) \leq \frac{h(q)}{q}\left(1-\frac{h(q)}{q}\right).
\end{align*}
If $\alpha \geq 3$ is an odd integer, then
\begin{align*}
-\frac{h(q)}{q}\left(1-\frac{h(q)}{q}\right) \leq \mathcal{G}(q^{\alpha}) \leq \frac{h(q)}{q}\left(1-\frac{h(q)}{q}\right).
\end{align*}
For any integer $\alpha \geq 2$, it holds that
\begin{align*}
\left|\mathcal{G}(q^{\alpha})\right| \leq \frac{h(q)}{q}\left(1-\frac{h(q)}{q}\right).
\end{align*}
\item[\rm (v)] Assume also (H4). For any odd integer $\alpha\geq 3$, the inequalities hold
\begin{align*}
0\leq \mathcal{G}(q^{\alpha}) \leq \frac{h(q)}{q}\left(1-\frac{h(q)}{q}\right).
\end{align*}
\end{enumerate}
\end{lemma}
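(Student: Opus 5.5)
We prove Lemma \ref{lemma-III} by the same route as Lemma \ref{G(r)-lemma}. The whole argument rests on (ii), so we establish it first and read off (i) and (iii)--(v) from the product formula.

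For (ii), write $r=\prod_{i=1}^{s}q_{i}^{\alpha_{i}}$ with kernel $R=q_{1}\cdots q_{s}$. Each divisor $d\mid R$ corresponds to a subset $S\subseteq\{1,\ldots,s\}$ via $d=\prod_{i\in S}q_{i}$. Since $f_{p_{1}\cdots p_{k}}(d)=\prod_{i}f_{q_{i}}(d)^{\alpha_{i}}$ and $f_{q_{i}}(d)$ depends only on whether $i\in S$, the summand in (\ref{4-def-Gr}) splits as
\begin{align*}
\prod_{i\in S}\left(1-\frac{h(q_{i})}{q_{i}}\right)^{\alpha_{i}}\frac{h(q_{i})}{q_{i}}\cdot\prod_{i\notin S}\left(-\frac{h(q_{i})}{q_{i}}\right)^{\alpha_{i}}\left(1-\frac{h(q_{i})}{q_{i}}\right).
\end{align*}
This uses the multiplicativity of $h$ from (H1) to write $h(d)/d=\prod_{i\in S}h(q_{i})/q_{i}$. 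Summing over all subsets $S$ factorises the expression into $\prod_{i}(\text{term for } i\in S+\text{term for } i\notin S)$, which is exactly (ii). Property (i) is then immediate, because coprime $r,r'$ involve disjoint sets of primes. For (iii), if some $\alpha_{i}=1$, that factor equals $\frac{h(q)}{q}(1-\frac{h(q)}{q})-\frac{h(q)}{q}(1-\frac{h(q)}{q})=0$.

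For (iv) and (v), put $t=h(q)/q\in[0,1]$, which is allowed since $0\le h(n)\le n$. By (ii),
\begin{align*}
\mathcal{G}(q^{\alpha})=t(1-t)\left((1-t)^{\alpha-1}+(-1)^{\alpha}t^{\alpha-1}\right).
\end{align*}
When $\alpha$ is even, the bracket is $(1-t)^{\alpha-1}+t^{\alpha-1}\ge0$. It is also at most $(1-t)+t=1$, because $u^{\alpha-1}\le u$ for $u\in[0,1]$ and $\alpha-1\ge1$. When $\alpha$ is odd, the bracket is $(1-t)^{\alpha-1}-t^{\alpha-1}$, which lies in $[-1,1]$ since both powers lie in $[0,1]$. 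These give the two-sided bounds in (iv), and the absolute value bound follows from them. For (v), hypothesis (H4) gives $t\le\frac12\le1-t$, so $(1-t)^{\alpha-1}\ge t^{\alpha-1}$ and the bracket is nonnegative. This mirrors the role of (\ref{zenshin-H4}) in Lemma \ref{G(r)-lemma}.

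The only point needing care is the factorisation in (ii). One must check that $f_{p_{1}\cdots p_{k}}(d)$ really depends only on which $q_{i}$ divide $d$, with exponent $\alpha_{i}$ counting the repeated $p_{j}$. One must also check that the product $\prod_{p\mid R/d}(1-h(p)/p)$ distributes over the indices $i\notin S$. Both follow directly from the definitions, so the remaining steps are elementary inequalities.
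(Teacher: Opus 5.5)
Your proof is correct and matches the paper's argument for (iv) and (v). Like the paper, you factor $t(1-t)$, with $t=h(q)/q\in[0,1]$, out of (ii), bound the remaining bracket by $1$ in absolute value (it is nonnegative when $\alpha$ is even), and use (H4), i.e.\ $t\le\frac12\le 1-t$, to get nonnegativity when $\alpha$ is odd. Your subset expansion proving (ii), from which (i) and (iii) follow at once, supplies the routine verification that the paper omits and attributes to Granville and Soundararajan.
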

\begin{proof} 
It is not difficult to prove the assertions.
However, to make sure of the necessity of the hypothesis (H4) in our proof of Proposition \ref{GS-prop-3-odd}, we shall prove (iv) and (v), here.  

By (ii) and the inequalities $0\leq \frac{h(q)}{q}, 1-\frac{h(q)}{q}\leq 1$ (by (H1)) we see that for any even integer $\alpha \geq 2$
\begin{align*}
0\leq \mathcal{G}\left(q^{\alpha}\right)&=\frac{h(q)}{q}\left(1-\frac{h(q)}{q}\right)
                                                         \left(\left(1-\frac{h(q)}{q}\right)^{\alpha -1}+\left(\frac{h(q)}{q}\right)^{\alpha-1}\right)\\
&\leq \frac{h(q)}{q}\left(1-\frac{h(q)}{q}\right) \left( 1-\frac{h(q)}{q} +\frac{h(q)}{q}\right)\\
&=\frac{h(q)}{q}\left(1-\frac{h(q)}{q}\right).
\end{align*}
As for an odd integer $\alpha \geq 3$, we have by (H1) and (ii)
\begin{align*}
\mathcal{G}(q^{\alpha})=\frac{h(q)}{q}\left(1-\frac{h(q)}{q}\right)^{\alpha}
                           -\left(\frac{h(q)}{q}\right)^{\alpha} \left(1-\frac{h(q)}{q}\right) \leq \frac{h(q)}{q} \left(1-\frac{h(q)}{q}\right),
\end{align*}
and
\begin{align}
\mathcal{G}(q^{\alpha})&=\frac{h(q)}{q}\left(1-\frac{h(q)}{q}\right)
                          \left(\left(1-\frac{h(q)}{q}\right)^{\alpha -1}-\left(\frac{h(q)}{q}\right)^{\alpha-1}\right) \label{g-shita-26}\\
                       &\geq \frac{h(q)}{q}\left(1-\frac{h(q)}{q}\right)
                               \left(-\left(1-\frac{h(q)}{q}\right)^{\alpha -1}-\left(\frac{h(q)}{q}\right)^{\alpha-1}\right) \nonumber \\
                       &\geq \frac{h(q)}{q}\left(1-\frac{h(q)}{q}\right)
                               \left(-\left(1-\frac{h(q)}{q}\right)-\left(\frac{h(q)}{q}\right)\right) \nonumber\\
                       &= -\frac{h(q)}{q}\left(1-\frac{h(q)}{q}\right).
\end{align}
Therefore we obtain the assertion (iv) by (H1) only. Using the hypothesis (H4), we find that
$1-\frac{h(q)}{q}\geq \frac{1}{2}\geq \frac{h(q)}{q}$. Then, the right-hand side of (\ref{g-shita-26}) is non-negative, we reach the assertion (v),
on (H1) and (H4).
\end{proof}

\begin{remark} We  obtained (v) of Lemma \ref{lemma-III}, by (H1) and (H4).
Especially, we see that $\mathcal{G}\left(q^{3}\right)\geq 0$, which is used in the proof of Lemma \ref{lemma-IV}, below.
This is the reason behind the necessity of the hypothesis (H4). Recall Remark \ref{RRRemark} in Section 2.
\end{remark}

We shall examine the right-hand side of (\ref{hoshi}) in Lemma \ref{lemma-II}, except the case for $k=1$.
As we have seen in Remark \ref{remark-akademori} in Section 3, it is an exception. See Remark \ref{ma-reigai-828} below.
Let now $k\geq 2$ be any integer. We shall apply (iii) of Lemma \ref{lemma-III} to $\sum_{p_{1},\ldots, p_{k}\in\mathcal{P}(z)}\mathcal{G}(r)$ in Lemma \ref{lemma-II}.
Then,
\begin{align}
\sum_{p_{1},\ldots, p_{k}\in \mathcal{P}(z)} \mathcal{G}(r) 
&= \sum_{\begin{subarray}{c} p_{1},\ldots, p_{k}\in \mathcal{P}(z)\\ p_{1}\cdots p_{k}: \textit{square-full} \end{subarray}} \mathcal{G}(p_{1}\cdots p_{k}) \nonumber\\
&= \sum_{s\leq \frac{k}{2}}\sum_{\begin{subarray}{c}q_{1}<\cdots <q_{s}\\ q_{i}\in\mathcal{P}(z)\end{subarray}}
                         \sum_{\begin{subarray}{c}\alpha_{1}+\cdots +\alpha_{s}=k\\ \alpha_{i}\geq 2\end{subarray}}
                            \frac{k!}{\alpha_{1}!\cdots \alpha_{s}!}\mathcal{G}(q_{1}^{\alpha_{1}}\cdots q_{s}^{\alpha_{s}})\nonumber\\
&=:\begin{cases}
{\displaystyle \mathcal{I}_{\frac{k}{2}}(k) +\sum_{s\leq \frac{k}{2}-1}\mathcal{I}_{s}(k)} & (k\geq 2, even)\\
{\displaystyle \mathcal{I}_{\frac{k-1}{2}}(k) +\sum_{s\leq \frac{k-3}{2}}\mathcal{I}_{s}(k)} & (k\geq 3, odd)
\end{cases},\quad (say). \label{42-s}                 
\end{align}
First, we shall consider the case for odd integers $k\geq 3$.
\begin{lemma}\label{lemma-IV}
Let $k\geq 3$ be odd. 
Under the hypotheses (H1)--(H7), we have
\begin{align}\label{mugi-0}
\sum_{p_{1},\ldots, p_{k}\in\mathcal{P}(z)}\mathcal{G}(r)=\tilde{C}_{k}\sigma_{\mathcal{P}}(z)^{k-1}+O\left((k^{3/2}C_{k})\sigma_{\mathcal{P}}(z)^{k-3}k^{3}\right),
\end{align}    
uniformly in $k$ and $z$.
\end{lemma}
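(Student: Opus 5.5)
We mirror the proof of Lemma \ref{Natsuki-1}, with $\sum_{p\le z}\frac1p(1-\frac1p)$ replaced by $\sigma_{\mathcal{P}}(z)^{2}$ and $G$ by $\mathcal{G}$. Start from the decomposition (\ref{42-s}) for odd $k$ and handle $\mathcal{I}_{\frac{k-1}{2}}(k)$ and $\sum_{s\le\frac{k-3}{2}}\mathcal{I}_s(k)$ separately. Throughout write $g(q):=\frac{h(q)}{q}(1-\frac{h(q)}{q})$. By Lemma \ref{lemma-III}(ii),
\begin{align*}
\mathcal{G}(q^{2})=g(q),\qquad \mathcal{G}(q^{3})=g(q)\Bigl(1-2\frac{h(q)}{q}\Bigr).
\end{align*}
By (H4) and Lemma \ref{lemma-III}(v), $0\le \mathcal{G}(q^3)\le \mathcal{G}(q^2)$.

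\textbf{Main term.} Only the $\frac{k-1}{2}$ exponent patterns $(3,2,\dots,2)$ up to position contribute to $\mathcal{I}_{\frac{k-1}{2}}(k)$. Exactly as in (\ref{a-hoshi-da})--(\ref{hachi-odd}), we get
\begin{align*}
\tilde{C}_k\sum_{\substack{q_1,\dots,q_{(k-1)/2}\in\mathcal{P}(z)\\ \text{distinct}}}\prod_j \mathcal{G}(q_j^{3})\;\le\; \mathcal{I}_{\frac{k-1}{2}}(k)\;\le\; \tilde{C}_k\sum_{\substack{q_1,\dots,q_{(k-1)/2}\in\mathcal{P}(z)\\ \text{distinct}}}\prod_j g(q_j).
\end{align*}
Dropping distinctness in the upper bound gives $\le\tilde{C}_k\sigma_{\mathcal{P}}(z)^{k-1}$.

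For the lower bound, first restrict to primes $\ge p^*$, where $\mathcal{G}(q^3)$ is decreasing by (H5). Then peel off one variable at a time as in (\ref{hachi-odd-2}). Each inner sum over primes distinct from the others is at least the sum over $p\ge \pi_{l^*+\frac{k-3}{2}}(\mathcal{P})$, which exists by (H6). This yields
\begin{align*}
\mathcal{I}_{\frac{k-1}{2}}(k)\ge\tilde{C}_k\Bigl(\sum_{p\ge \pi_{l^*+\frac{k-3}{2}}(\mathcal{P})}\mathcal{G}(p^3)\Bigr)^{\frac{k-1}{2}}.
\end{align*}
Next write $\mathcal{G}(p^3)=g(p)-2\frac{h(p)}{p}g(p)$. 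Summed over all $p$, the second part equals $2\tilde{\sigma}_{\mathcal{P}}(z)^2=O(1)$ by (H7). The excluded primes number $l^*+\frac{k-3}{2}=O(k)$ by (H6), each with $g\le 1/4$. Hence the inner sum is $\sigma_{\mathcal{P}}(z)^2+O(k)$. Raising to the power $\frac{k-1}{2}$ and using (H3), i.e.\ $k^2\le \sigma_{\mathcal{P}}(z)^{4/3}$, so that $k^2/\sigma^2=o(1)$, gives
\begin{align*}
\mathcal{I}_{\frac{k-1}{2}}(k)=\tilde{C}_k\sigma_{\mathcal{P}}(z)^{k-1}+O\bigl(\tilde{C}_k k^2\sigma_{\mathcal{P}}(z)^{k-3}\bigr).
\end{align*}
Since $\tilde{C}_k\asymp k^{3/2}C_k$, this error is within the claimed one.

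\textbf{Lower-order terms.} For $s\le\frac{k-3}{2}$, use Lemma \ref{lemma-III}(i),(iv): $|\mathcal{G}(q_1^{\alpha_1}\cdots q_s^{\alpha_s})|\le\prod_i g(q_i)$. Summing over $q_1<\dots<q_s$ gives at most $\sigma_{\mathcal{P}}(z)^{2s}/s!$. This replaces the bound $\frac{1}{q_1\cdots q_s}$ and $(\log\log z)^s$ in (\ref{atsui-atsui}) by $\sigma^{2s}$, and it is actually cleaner since no factor $3$ is needed. Bounding $\prod_i 1/\alpha_i!\le 2^{-s}$ and counting compositions by $\binom{k-s-1}{s-1}$, the computation (\ref{atsui-atsui-2})--(\ref{al-12-9-19}) goes through verbatim with $\log\log z\mapsto\sigma_{\mathcal{P}}(z)^2$. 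It uses (\ref{dango-1})--(\ref{dango-3}) and $k^3\le\sigma_{\mathcal{P}}(z)^2$ from (H3) to sum the geometric-type series. The result is
\begin{align*}
\sum_{s\le\frac{k-3}{2}}|\mathcal{I}_s(k)|\ll k^{9/2}C_k\sigma_{\mathcal{P}}(z)^{k-3}=(k^{3/2}C_k)k^3\sigma_{\mathcal{P}}(z)^{k-3}.
\end{align*}
Combining the two parts gives (\ref{mugi-0}). For $k=3$ the second sum is empty, and the first is computed directly.

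The main obstacle is the lower bound for the main term. The peeling argument needs both the monotonicity of $\mathcal{G}(p^3)$ beyond $p^*$ (H5), with enough primes there, and control of the discrepancy between $\sum\mathcal{G}(p^3)$ and $\sigma_{\mathcal{P}}(z)^2$. The discrepancy is exactly $2\tilde\sigma_{\mathcal{P}}(z)^2$ plus the $O(k)$ removed primes, which is why (H6) and (H7) are needed. One must check that these errors stay $O(k)$, rather than growing with $z$, so that (H3) makes $(1+O(k/\sigma^2))^{(k-1)/2}=1+O(k^2/\sigma^2)$ legitimate.
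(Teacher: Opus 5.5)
Your proposal is correct and follows the paper's proof essentially step for step. It uses the same sandwich of $\mathcal{I}_{\frac{k-1}{2}}(k)$ between the $\mathcal{G}(q^{3})$- and $\mathcal{G}(q^{2})$-products, the same peeling argument via (H5)--(H6), the same use of (H7) through $\sum_{p}\mathcal{G}(p^{3})=\sigma_{\mathcal{P}}(z)^{2}-2\tilde{\sigma}_{\mathcal{P}}(z)^{2}$, and the same transfer of the computation (\ref{atsui-atsui})--(\ref{al-12-9-19}) to $s\leq\frac{k-3}{2}$. The only differences are cosmetic: you bound the removed small-prime terms by $1/4$ instead of $\sqrt{3}/18$, and you keep $\binom{k-s-1}{s-1}$ rather than passing to $\binom{k-s}{s}$.
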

\begin{proof}
As for $\mathcal{I}_{\frac{k-1}{2}}(k)$ in (\ref{42-s}), we shall show
\begin{align}\label{mugi}
\mathcal{I}_{\frac{k-1}{2}}(k)=\tilde{C}_{k}\sigma_{\mathcal{P}}(z)^{k-1}\left(1+O\left(\frac{k^{2}}{\sigma_{\mathcal{P}}(z)^{2}}\right)\right),
\end{align}
under the assumptions. Since $\sharp \mathcal{P}(z)\geq k^{3}$ (by (H3)), $\mathcal{I}_{\frac{k-1}{2}}(k)$ is not an empty sum.
We observe that
\begin{align}
\mathcal{I}_{\frac{k-1}{2}}(k)
&=\sum_{\begin{subarray}{c}q_{1}<\cdots <q_{\frac{k-1}{2}}\\ q_{i}\in\mathcal{P}(z)\end{subarray}}
  \sum_{\begin{subarray}{c}\alpha_{1}+\cdots +\alpha_{\frac{k-1}{2}}=k\\ \alpha_{i}\geq 2\end{subarray}}
   \frac{k!}{\alpha_{1}!\cdots \alpha_{\frac{k-1}{2}}!}\mathcal{G}\left(q_{1}^{\alpha_{1}}\cdots q_{\frac{k-1}{2}}^{\alpha_{\frac{k-1}{2}}}\right)\nonumber\\
&=\frac{k!}{2^{\frac{k-1}{2}-1}\cdot 3!}\sum_{l=1}^{\frac{k-1}{2}}
   \left(
           \sum_{\begin{subarray}{c}q_{1}<\cdots <q_{\frac{k-1}{2}}\\q_{i}\in\mathcal{P}(z)\end{subarray}} \mathcal{G}\left(q_{l}^{3}\right)
             \prod_{\begin{subarray}{c}i=1\\ i\neq l\end{subarray}}^{\frac{k-1}{2}}\mathcal{G}\left(q_{i}^{2}\right)
    \right),\label{43-s}
\end{align}
in fact, there are $\frac{k-1}{2}$-tuples $\{\alpha_{1}, \ldots, \alpha_{\frac{k-1}{2}}\}$ satisfying that $\alpha_{1}+\cdots +\alpha_{\frac{k-1}{2}}=k$ and $\alpha_{i}\geq 2$:
\begin{align*}
\begin{cases}
 \alpha_{1}=3,\ \alpha_{2}=\cdots =\alpha_{\frac{k-1}{2}}=2, & \\
\quad \cdots & \\
\alpha_{1}=\cdots =\alpha_{\frac{k-1}{2}-1}=2,\ \alpha_{\frac{k-1}{2}}=3. &
\end{cases}
\end{align*}
On the right-hand side of (\ref{43-s}), note that
\begin{align*}
& \mathcal{G}\left(q_{l}^{3}\right)=\frac{h(q_{l})}{q_{l}}\left(1-\frac{h(q_{l})}{q_{l}}\right) \left(1-2 \frac{h(q_{l})}{q_{l}}\right) \geq 0,\\
& \mathcal{G}\left(q_{i}^{2}\right) =\frac{h(q_{i})}{q_{i}} \left(1-\frac{h(q_{i})}{q_{i}}\right) \geq 0,
\end{align*}
(by (H1), (H4), (ii) of Lemma \ref{lemma-III}) and $\mathcal{G}\left(q_{i}^{2}\right)\geq \mathcal{G}\left(q_{i}^{3}\right) \geq 0$, and
\begin{align*}
\tilde{C}_{k}=\frac{k!}{2^{\frac{k-1}{2}}3}\frac{k-1}{2}\frac{1}{\left(\frac{k-1}{2}\right)!}\quad (by\, (\ref{teigi-C-j-tilde})).
\end{align*}
We have
\begin{align}\label{44-s}
\mathcal{I}_{\frac{k-1}{2}}(k)
\begin{cases}
\leq \tilde{C}_{k} {\displaystyle \sum_{\begin{subarray}{c}q_{1}, \ldots, q_{\frac{k-1}{2}}\in\mathcal{P}(z)\\ distinct\end{subarray}}} 
                    {\displaystyle \prod_{i=1}^{\frac{k-1}{2}}}\mathcal{G}\left(q_{i}^{2}\right)\\
\geq \tilde{C}_{k} {\displaystyle \sum_{\begin{subarray}{c}q_{1}, \ldots, q_{\frac{k-1}{2}}\in\mathcal{P}(z)\\ distinct\end{subarray}}} 
                    {\displaystyle \prod_{i=1}^{\frac{k-1}{2}}}\mathcal{G}\left(q_{i}^{3}\right)
\end{cases}.
\end{align}
(Recall that $\mathcal{P}(z)$ contains distinct $k^{3}$-primes, at least, by (H3).)
By (iii) of Lemma \ref{lemma-III}, we bound $\mathcal{I}_{\frac{k-1}{2}}(k)$, as
\begin{align}\label{45-s}
\mathcal{I}_{\frac{k-1}{2}}(k)
\leq  \tilde{C}_{k} \left(\sum_{q\in\mathcal{P}(z)}\frac{h(q)}{q}\left(1-\frac{h(q)}{q}\right)\right)^{\frac{k-1}{2}}
=\tilde{C}_{k}\sigma_{\mathcal{P}}(z)^{k-1}.
\end{align}
We emphasize, here, 
that two hypothese (H1) and (H3) are only required to obtain the upper bound of $\mathcal{I}_{\frac{k-1}{2}}(k)$.
However, to obtain a lower bound of $\mathcal{I}_{\frac{k-1}{2}}(k)$, the seven hypotheses (H1)--(H7) are required.

First, we shall take a prime $p^{*}\in\mathcal{P}(z)$ from the hypothesis (H5). Then,
\begin{align}
\mathcal{I}_{\frac{k-1}{2}}(k)
& \geq \tilde{C}_{k}\sum_{\begin{subarray}{c}p^{*}\leq q_{1},\ldots , q_{\frac{k-1}{2}}\in\mathcal{P}(z) \\ q_{i}: distinct\end{subarray}}
                  \prod_{i=1}^{\frac{k-1}{2}} \mathcal{G} \left(q_{i}^{3}\right) \nonumber \\
&= \tilde{C}_{k}\sum_{\begin{subarray}{c}p^{*}\leq q_{1},\ldots , q_{\frac{k-1}{2}-1}\in\mathcal{P}(z) \\ q_{i}: distinct\end{subarray}}
                 \prod_{i=1}^{\frac{k-1}{2}-1}\mathcal{G}\left(q_{i}^{3}\right)
                 \sum_{\begin{subarray}{c}p^{*}\leq p \in\mathcal{P}(z) \\ p \neq q_{i} (i=1,\ldots, \frac{k-1}{2}-1)\end{subarray}}\mathcal{G}\left(p^{3}\right). \label{46-s}
\end{align}
Here, by the decreasing monotonicity of $\mathcal{G}(p^{3})$ (by (H5)), we find that
\begin{align*}
\sum_{\begin{subarray}{c}p^{*}\leq p \in\mathcal{P}(z) \\ p \neq q_{i} (i=1,\ldots, \frac{k-1}{2}-1)\end{subarray}}\mathcal{G}\left(p^{3}\right)
\geq \sum_{\pi_{l^{*}+\frac{k-3}{2}(\mathcal{P})}\leq t \in\mathcal{P}(z)}\mathcal{G}\left(t^{3}\right).
\end{align*}  
Note that by (H6) the right-hand side is not an empty sum.
Repeating this evaluation in (\ref{46-s}), we get
\begin{align}
&(\textit{RHS of }(\ref{46-s}))\nonumber \\
&\geq \tilde{C}_{k} \left(\sum_{\pi_{l^{*}+\frac{k-3}{2}}(\mathcal{P})\leq p \in\mathcal{P}(z)} \mathcal{G}\left(p^{3}\right)\right)^{\frac{k-1}{2}} 
                                                                                  \nonumber\\
&\geq \tilde{C}_{k} \left(\sum_{p\in\mathcal{P}(z)}\mathcal{G}\left(p^{3}\right)
                          -\sum_{\begin{subarray}{c}p\in\mathcal{P}(z)\\ p \leq \pi_{l^{*}+\frac{k-3}{2}}(\mathcal{P})\end{subarray}}
                          \mathcal{G}\left(p^{3}\right)\right)^{\frac{k-1}{2}}
 \left(\textit{note $\mathcal{G}\left(p^{3}\right) \leq\frac{\sqrt{3}}{18}$}\right) \nonumber\\
&\geq \tilde{C}_{k}\left(\sum_{p\in\mathcal{P}(z)}g\left(p^{3}\right) -\frac{\sqrt{3}}{18}c_{*}k\right)^{\frac{k-1}{2}} 
 \quad (\textit{$c_{*}>0$ is a constant by (H6)})\nonumber \\
&\geq \tilde{C}_{k}\left(\sigma_{\mathcal{P}}(z)^{2}-2c_{1}^{2}-\frac{\sqrt{3}}{18}c_{*}k\right)^{\frac{k-1}{2}} \quad (\textit{by (H7)})\nonumber \\
&=\tilde{C}_{k}\left(\sigma_{\mathcal{P}}(z)^{2} \left(1-\frac{2c_{1}^{2}+\frac{\sqrt{3}}{18}c_{*}k}{\sigma_{\mathcal{P}}(z)^{2}}\right)\right)^{\frac{k-1}{2}} 
\ (\textit{$\geq 0$ by (H3) and (H2)})\nonumber \\
&= \tilde{C}_{k} \sigma_{\mathcal{P}}(z)^{k-1}\left(1+O\left(\frac{k^{2}}{\sigma_{\mathcal{P}}(z)^{2}}\right)\right) \quad  (\textit{by (H3)}). \label{50-s}
\end{align}
Therefore, from (\ref{44-s}), (\ref{45-s}), and (\ref{50-s}) we obtain the assertion (\ref{mugi}).

Next, in (\ref{42-s}), we shall derive that
\begin{align}
\sum_{s \leq \frac{k-3}{2}}\mathcal{I}_{s}(k) 
&= 
\begin{cases}
0 & (k=3)\\
O\left((k^{3/2}C_{k})\sigma_{\mathcal{P}}(z)^{k-3} k^{3}\right) & (k\geq 5, \textit{odd})
\end{cases}\nonumber \\
&\ll \left(k^{3/2}C_{k})\sigma_{\mathcal{P}}(z)^{k-3} k^{3}\right) \quad (k\geq 3, \textit{odd}). \label{al-51}
\end{align}
It is obvious for $k=3$. By (i), (iv) of   Lemma \ref{lemma-III}  we have for odd integers $k\geq 5$
\begin{align*}
\sum_{s\leq\frac{k-3}{2}}\mathcal{I}_{s}(k) 
&\ll \sum_{s\leq \frac{k-3}{2}} \sum_{\begin{subarray}{c}q_{1}<\cdots <q_{s}\\ q_{i}\in\mathcal{P}(z)\end{subarray}}
     \sum_{\begin{subarray}{c}\alpha_{1}+\cdots +\alpha_{s}=k\\ \alpha_{i}\geq 2\end{subarray}} \frac{k!}{\alpha_{1}! \cdots \alpha_{s}!} \prod_{i=1}^{s}
       \mathcal{G}\left(q_{i}^{2}\right) \\
&\leq \sum_{s\leq \frac{k-3}{2}} \frac{k!}{s!}\left(\sum_{q\in\mathcal{P}(z)}\frac{h(q)}{q} \left(1-\frac{h(q)}{q}\right)\right)^{s}
      \sum_{\begin{subarray}{c}\alpha_{1}+\cdots \alpha_{s}=k\\ \alpha_{i}\geq 2\end{subarray}}\frac{1}{\alpha_{1}!\cdots \alpha_{s}!}\\
&\leq \sum_{s\leq \frac{k-3}{2}}\frac{k!}{s!2^{s}}\sigma_{\mathcal{P}}(z)^{2s} \binom{k-s}{s} \quad (\textit{see}\ (\ref{atsui-atsui})) \\
&= \sum_{s\leq \frac{k-3}{2}} C_{k}\frac{\Gamma \left(\frac{k}{2}+1\right)2^{\frac{k}{2}-s}}{s!} \frac{(k-s)!}{s!(k-2s)!} 
   \frac{\left(\sigma_{\mathcal{P}}(z)^{2}\right)^{\frac{k-3}{2}}}{\left(\sigma_{\mathcal{P}}(z)^{2}\right)^{\frac{k-3}{2}-s}}.
\end{align*}
Here, we put $j=\frac{k-3}{2}-s$. Then,
\begin{align*}
(\textit{RHS})
=C_{k}\sigma_{\mathcal{P}}(z)^{k-3}\sum_{j=0}^{\frac{k-5}{2}} \frac{\Gamma \left(\frac{k}{2}+1\right)2^{\frac{3}{2}+j}}{\left(\frac{k-3}{2}-j\right)!}
 \frac{\left(\frac{k+3}{2}+j\right)!}{\left(\frac{k-3}{2}-j\right)!(2j+3)!} \frac{1}{\sigma_{\mathcal{P}}(z)^{2j}}.
\end{align*}
Applying (\ref{dango-1}), (\ref{dango-2}), (\ref{dango-3}), and (H3) we see that
\begin{align*}
&\ll k^{9/2}C_{k}\sigma_{\mathcal{P}}(z)^{k-3}\sum_{j=0}^{\frac{k-5}{2}}\frac{1}{(2j+3)!}\left(\frac{k^{3}}{\sigma_{\mathcal{P}}(z)^{2}}\right)^{j}\\
& \leq k^{9/2}C_{k}\sigma_{\mathcal{P}}(z)^{k-3}\sum_{j=0}^{\frac{k-5}{2}}\frac{1}{(2j+3)!}\\ 
& \ll \left(k^{3/2}C_{k}\right)\sigma_{\mathcal{P}}(z)^{k-3} k^{3}.
\end{align*}
Hence, by (\ref{42-s}), (\ref{mugi}), and (\ref{al-51}) we, finally,   reach the assertion (\ref{mugi-0}) of Lemma \ref{lemma-IV}.
\end{proof}

\begin{remark}\label{remark-ue-1}
On the hypotheses (H1) and (H3) we obtain the upper bound
\begin{align}\label{al-53}
\sum_{p_{1},\ldots, p_{k}\in\mathcal{P}(z)}\mathcal{G}(r) \ll \tilde{C}_{k}\sigma_{\mathcal{P}}(z)^{k-1},
\end{align}
by (\ref{42-s}), (\ref{45-s}), and (\ref{al-51}).
\end{remark}

We shall bound $\sum_{p_{1},\ldots, p_{k}\in\mathcal{P}(z)}\mathcal{E}(r)$ in Lemma \ref{lemma-II}.
\begin{lemma}[{\cite[p.~22, 23]{GS}}]\label{lemma-V}
Let $k\geq 2$ be any integer.
Under the hypotheses (H1) and (H3), 
as for (\ref{hoshi}) of Lemma \ref{lemma-II} we have
\begin{align}\label{GS-22-nazo}
\sum_{p_{1},\ldots, p_{k}\in\mathcal{P}(z)}\mathcal{E}(r) =O\left(\mu_{\mathcal{P}}(z)^{k}\sum_{d\in\mathcal{D}_{k}(\mathcal{P})}|r_{d}(x)|\right).
\end{align}
\end{lemma}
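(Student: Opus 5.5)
We bound each $\mathcal{E}(r)$ termwise and then count how often each remainder $r_{m}(x)$ occurs. Fix a $k$-tuple $p_{1},\ldots,p_{k}\in\mathcal{P}(z)$, with $r=p_{1}\cdots p_{k}$ and kernel $R$. In the definition (\ref{4-def-Er}) of $\mathcal{E}(r)$, the index $de$ runs over squarefree divisors of $R$. Every such divisor is a product of at most $k$ distinct primes of $\mathcal{P}(z)$, so it lies in $\mathcal{D}_{k}(\mathcal{P})$.

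\textbf{Step 1: regroup by $m=de$.} We write
\begin{align*}
|\mathcal{E}(r)|\leq \sum_{m|R}|r_{m}(x)|\sum_{d|m}|f_{p_{1}\cdots p_{k}}(d)|.
\end{align*}
By (H1) we have $0\leq h(p)/p\leq 1$, so $|f_{p}(n)|\leq 1$ for every $p$ and $n$. More precisely, $|f_{p_{1}\cdots p_{k}}(d)|\leq \prod_{p_{i}\nmid d}h(p_{i})/p_{i}$, and this finer form is what we will need.

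\textbf{Step 2: swap the order of summation.} Interchanging sums gives
\begin{align*}
\sum_{p_{1},\ldots,p_{k}\in\mathcal{P}(z)}|\mathcal{E}(r)|\leq \sum_{m\in\mathcal{D}_{k}(\mathcal{P})}|r_{m}(x)|\sum_{d|m}\ \sum_{\substack{p_{1},\ldots,p_{k}\in\mathcal{P}(z)\\ m|p_{1}\cdots p_{k}}}\ \prod_{p_{i}\nmid d}\frac{h(p_{i})}{p_{i}}.
\end{align*}
It therefore suffices to show that, for fixed squarefree $m$ with $\omega(m)=t\leq k$, the inner double sum is $\ll\mu_{\mathcal{P}}(z)^{k}$ uniformly in $k$.

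\textbf{Step 3: bound the inner sum for fixed $m$ and $d$.} We proceed as follows.
\begin{itemize}
\item Each tuple with $m|p_{1}\cdots p_{k}$ is obtained by choosing a set $J$ of $t$ positions to carry the primes of $m$, in at most $k^{t}$ ways once the order of assignment is included. The remaining $k-t$ coordinates are free.
\item Each free coordinate $p_{i}$ with $p_{i}\nmid d$ contributes at most $\sum_{p\in\mathcal{P}}h(p)/p=\mu_{\mathcal{P}}(z)$.
\item A free coordinate with $p_{i}\mid d$ contributes at most $\omega(d)\leq k$ choices, each with weight $1$.
\end{itemize}
This yields crude bounds of the shape $2^{t}k^{2k}\mu_{\mathcal{P}}(z)^{k-t}$.

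\textbf{Main obstacle: absorbing the combinatorial factors.} The bound from Step 3 is too lossy, and the hard part is making the factors $k^{t}$ and $2^{t}$ disappear into $\mu_{\mathcal{P}}(z)^{k}$ without losing a power of $k$. Here (H3), together with $\sigma_{\mathcal{P}}^{2}\leq\mu_{\mathcal{P}}$, enters: it gives $\mu_{\mathcal{P}}(z)\geq k^{3/2}$.

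First I would organise the sum by the set partition of positions $\{1,\ldots,k\}$ induced by equal primes. This is the Stirling-number decomposition used in the proof of Proposition \ref{K-prop-1}. Each block contributes either a factor $\leq \mu_{\mathcal{P}}(z)$, when its prime divides neither $m$ nor is excluded, or a bounded factor, when its prime is one of the $t$ primes of $m$.

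Summing over partitions then gives a total of at most
\begin{align*}
\sum_{j}S(k,j)\,\mu_{\mathcal{P}}(z)^{j}\,2^{t}.
\end{align*}
I expect to control $\sum_{j}S(k,j)\mu^{j}\leq(\mu+k)^{k}\ll\mu^{k}e^{k^{2}/\mu}$, which is $\ll\mu^{k}$ only if $\mu\gg k^{2}$. This is the delicate point, since (H3) gives only $\mu\geq k^{3/2}$.

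If this approach fails, I would follow \cite{GS} and allow the implied constant to absorb factors of the form $2^{k}$ (compare the term $2^{k}\pi(z)^{k}$ in Proposition \ref{GS-prop-2}). In that case I would carefully redistribute the weights $h(p)/p$ from $f$ and from $\mu(e)$ so that each coordinate carries its own factor $h(p)/p$ or is paired with a divisor of $m$.
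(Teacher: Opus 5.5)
Your Steps 1 and 2 are sound, and the interchange in Step 2 is the right organisation. The paper's write-up only derives a bound for a single tuple, $|\mathcal{E}(r)|\le 2\mu_{\mathcal{P}}(z)^{k}\sum_{\delta\mid R}|r_{\delta}(x)|$ (via $(1+\mu_{\mathcal{P}}(z))^{k}\le 2\mu_{\mathcal{P}}(z)^{k}$). It then passes to the sum over tuples without comment. Summing such a bound over all $k$-tuples would bring in a factor of roughly $\sharp\mathcal{P}(z)^{k-\omega(\delta)}$. The powers of $\mu_{\mathcal{P}}(z)$ must instead come from summing the weights $h(p_i)/p_i$ over free coordinates, exactly as in your setup.

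The genuine gap is that you do not finish. You declare a ``delicate point'' and fall back on absorbing $2^{k}$ into the implied constant. That proves only a weaker statement, whereas the lemma is needed uniformly in $k$: Proposition \ref{GS-prop-3-odd} is stated uniformly in $k$, $z$, $x$.

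The obstacle is not real; it comes from two slips.
\begin{enumerate}
\item[(1)] (H3) says $k\le\sigma_{\mathcal{P}}(z)^{2/3}$, so $\sigma_{\mathcal{P}}(z)\ge k^{3/2}$. Hence $\mu_{\mathcal{P}}(z)\ge\sigma_{\mathcal{P}}(z)^{2}\ge k^{3}$, not merely $\mu_{\mathcal{P}}(z)\ge k^{3/2}$.
\item[(2)] In Step 3 you multiplied the number of choices by the weight. A free coordinate actually contributes the sum $\sum_{p\mid d}1+\sum_{p\nmid d}h(p)/p\le\omega(d)+\mu_{\mathcal{P}}(z)\le k+\mu_{\mathcal{P}}(z)$. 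Each of the $t$ positions pinned to a prime of $m$ carries weight at most $1$.
\end{enumerate}
So your own count gives $k^{t}(k+\mu_{\mathcal{P}}(z))^{k-t}$ for each $d\mid m$. Writing $\mu=\mu_{\mathcal{P}}(z)$,
\[
\sum_{d\mid m}\ \sum_{\substack{p_{1},\ldots,p_{k}\in\mathcal{P}(z)\\ m\mid p_{1}\cdots p_{k}}}\ \prod_{p_{i}\nmid d}\frac{h(p_{i})}{p_{i}}\le (2k)^{t}(\mu+k)^{k-t}\le(\mu+k)^{k}\le e^{k^{2}/\mu}\mu^{k}\le e\,\mu^{k},
\]
using $k\le\mu$ for the second inequality and $k^{2}\le\mu$ for the last. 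This proves the lemma with an absolute constant. The factor $2^{t}$ from the divisors $d\mid m$ is harmless because pinning the $t$ primes of $m$ costs $k^{t}$ rather than $(\mu+k)^{t}$. No Stirling-number decomposition is needed.
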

\begin{proof}
By the definitions (\ref{4-def-Er}) and (\ref{hori-mina-maki}) of $\mathcal{E}(r)$ and $f_{p_{1}\cdots p_{k}}(\cdot)$ we observe that
\begin{align}
|\mathcal{E}(r)| &\leq \sum_{d|R}|f_{p_{1}\cdots p_{k}}(d)|\sum_{e\left|\frac{R}{d}\right.}|r_{de}(x)| 
                 = \sum_{\delta |R}|r_{\delta}(x)|\sum_{\delta^{\prime}|\frac{R}{\delta}}|f_{p_{1}\cdots p_{k}}(\delta^{\prime})| \nonumber\\ 
                 &\leq \sum_{\delta |R}|r_{\delta}(x)|\sum_{\delta^{\prime}|\frac{R}{\delta}}\prod_{\begin{subarray}{c} p_{i}\\ p_{i}\nmid \delta^{\prime}\end{subarray}}\frac{h(p_{i})}{p_{i}}. \label{mai-zenigata}
\end{align}
Since $R=q_{1}\cdots q_{s}$ ($q_{1}<\cdots< q_{s}$, $s=\omega(R)$), we write $\delta=q_{i_{1}}\cdots q_{i_{i_{t}}}$, $t=\omega(\delta)$, also we put $j=\omega(\delta^{\prime})$.
Denote by $\tilde{j}$ the number of $p_{i}$ in $r=p_{1}\cdots p_{k}$ such that $p_{i} \nmid \delta^{\prime}$. 
We see that $0\leq s-t\leq \omega(R/\delta)\leq k$ and $\tilde{j}\leq k-j$. 
We observe that
\begin{align*}
(\textit{RHS of (\ref{mai-zenigata})}) &\leq \sum_{\delta |R} |r_{\delta}(x)| \sum_{\delta^{\prime}|\frac{R}{\delta}}\left(\sum_{p\in\mathcal{P}(z)}\frac{h(p)}{p}\right)^{\tilde{j}}
\nonumber\\
&\leq   \sum_{\delta |R} |r_{\delta}(x)| \sum_{\delta^{\prime}|\frac{R}{\delta}}\left(\mu_{\mathcal{P}}(z)\right)^{k-j} \quad (\textit{by (H3)}, \mu_{\mathcal{P}}(z)\geq 1)\nonumber \\
&= \sum_{\delta |R} |r_{\delta}(x)| \sum_{j=0}^{s-t} \binom{s-t}{j}\mu_{\mathcal{P}}(z)^{k-j} \nonumber\\
&= \sum_{\delta |R} |r_{\delta}(x)| \sum_{j=0}^{s-t} \binom{s-t}{j}\mu_{\mathcal{P}}(z)^{(s-t)-j}\mu_{\mathcal{P}}(z)^{k-(s-t)} \nonumber\\
&= \sum_{\delta |R} |r_{\delta}(x)|(1+\mu_{\mathcal{P}}(z))^{s-t}\mu_{\mathcal{P}}(z)^{k-(s-t)} \nonumber \\
&\leq (1+\mu_{\mathcal{P}}(z))^{k} \sum_{\delta |R} |r_{\delta}(x)| \nonumber \\
&\leq 2\mu_{\mathcal{P}}(z)^{k} \sum_{\delta |R}|r_{\delta}(x)| \quad (\textit{by (H3)}), 
\end{align*}
and we obtain the assertion (\ref{GS-22-nazo}).
\end{proof}

We can now obtain  Proposition \ref{GS-prop-3-odd} as follows.

\begin{proof}[Proof of Proposition \ref{GS-prop-3-odd}]
By (\ref{hoshi}) of Lemma \ref{lemma-II}, (\ref{mugi-0}) of Lemma \ref{lemma-IV}, (\ref{GS-22-nazo}) of Lemma \ref{lemma-V},
we obtain the assertion of Proposition \ref{GS-prop-3-odd}.
\end{proof}

\bigskip

Concerning the proof of Proposition \ref{GS-prop-3-odd}, we shall mention two remarks.
\begin{remark}\label{tokeigaugoku} 
It is enough to assume (H1) and (H3) to get that
\begin{align*}
\sum_{a\in\mathcal{A}(x)}(\omega_{\mathcal{P}}(a)-\mu_{\mathcal{P}}(z))^{k}
\ll \tilde{C}_{k}x\sigma_{\mathcal{P}}(z)^{k-1} +\mu_{\mathcal{P}}(z)^{k}\sum_{d\in\mathcal{D}_{k}(\mathcal{P})}|r_{d}(x)|,
\end{align*}
by use of (\ref{hoshi}), (\ref{al-53}), and (\ref{GS-22-nazo}). See, also Remark \ref{royalmilktea3bai}. 
\end{remark}

\begin{remark}\label{ma-reigai-828}
Proposition \ref{GS-prop-3-odd} does not include the case $k=1$. However, for $k=1$ we have easily that
\begin{align}\label{k=1-prop-4-2}
\sum_{a\in\mathcal{A}(x)}(\omega_{\mathcal{P}}(a)-\mu_{\mathcal{P}}(z))
=O\left(|r_{1}(x)|\mu_{\mathcal{P}}(z)\right)+O\left(\sum_{p\in\mathcal{P}(z)}|r_{p}(x)|\right).
\end{align}
In fact, by (\ref{hori-maki}) (or (\ref{al-41})), (\ref{hori-mina-maki}) and (H1) we have
\begin{align*}
&\sum_{a\in\mathcal{A}(x)}(\omega_{\mathcal{P}}(a)-\mu_{\mathcal{P}}(z))
=\sum_{p\in\mathcal{P}(z)}\sum_{a\in\mathcal{A}(x)}f_{p}(a)\\
&=\sum_{p\in\mathcal{P}(z)}
   \left(\sum_{\begin{subarray}{c}a\in\mathcal{A}(x)\\ p|a\end{subarray}}\left(1-\frac{h(p)}{p}\right) 
         +\sum_{a\in\mathcal{A}(x)}\left(-\frac{h(p)}{p}\right) 
         +\sum_{\begin{subarray}{c}a\in\mathcal{A}(x)\\ p|a \end{subarray}}\frac{h(p)}{p}\right)\\
&= \sum_{p\in\mathcal{P}(z)}\left(\sum_{\begin{subarray}{c}a\in\mathcal{A}(x)\\ p|a \end{subarray}}1-\frac{h(p)}{p}\sum_{a\in\mathcal{A}(x)}1\right)\\
&=\sum_{p\in\mathcal{P}(z)}\left(\frac{h(p)}{p}N(x)+r_{p}(x)-\frac{h(p)}{p}(N(x)+r_{1}(x))\right)\quad (\textit{by (H1)})\\
&=\sum_{p\in\mathcal{P}(z)}r_{p}(x) -r_{1}(x)\sum_{p\in\mathcal{P}(z)}\frac{h(p)}{p},
\end{align*}
which implies (\ref{k=1-prop-4-2}). The bound (\ref{k=1-prop-4-2}) is corresponding to (\ref{k=1-lemma-natsuki-1}) in Remark \ref{k=1-lemma-natsuki-1-remark}.
\end{remark}

We may consider a corresponding result of Proposition \ref{GS-prop-3-odd} for even integers $k\geq 2$.
In this case, probably, the hypotheses (H4) and (H7) are not required, and instead of (H5) and (H6)
the following (H5$^{\prime}$) and (H6$^{\prime}$) are asked:

\begin{enumerate}
\item[\bf (H5$^{\prime}$)] {\it For sufficiently large $z\geq 1$, there exists a prime $p^{*}\in\mathcal{P}(z)$ satisfying
\begin{align*}
\frac{h(p)}{p}\left(1-\frac{h(p)}{p}\right)\quad ({\it for}\ p\geq p^{*},\ p\in\mathcal{P}(z))
\end{align*} 
is monotonic decreasing.

Furthermore, the set $\{p\in\mathcal{P}\, |\, p\geq p^{*}\}$ contains at least $\frac{k}{2}$ distinct primes.
}
\item[\bf (H6$^{\prime}$)] {\it As in (H6), let $\pi_{l}(\mathcal{P})$ denotes the $l$th smallest prime in $\mathcal{P}(z)$, and  for $p^{*}$ in (H5$^{\prime}$), 
we write $p^{*}=\pi_{l^{*}}(\mathcal{P})$. As for $l^{*}$, the following properties are satisfied
\begin{align*}
\pi_{l^{*}+\frac{k-2}{2}}(\mathcal{P}) \in \mathcal{P} \quad \text{and}\quad l^{*}=O(k).
\end{align*}}
\end{enumerate}

Under the hypotheses (H1)--(H3), (H5$^{\prime}$) and (H6$^{\prime}$) we have the following proposition for even integers $k\geq 2$.
\begin{proposition}[{cf. Proposition.~\ref{GS-prop-3-odd}}]\label{GS-prop-3-EVEN}
Let $k\geq 2$ be an even integer. 
For sufficiently large $x\geq 1$ and $z\geq 1$, Under the hypotheses (H1)--(H3), (H5$^{\prime}$), and (H6$^{\prime}$), we have
uniformly in $k$, $z$, and $x$,
\begin{align}\label{al-70}
&\sum_{a\in\mathcal{A}(x)}(\omega_{\mathcal{P}}(a)-\mu_{\mathcal{P}}(z))^{k} \nonumber \\
&=C_{k}N(x)\sigma_{\mathcal{P}}(z)^{k}\left(1+O\left(\frac{k^{3}}{\sigma_{\mathcal{P}}(z)^{2}}\right)\right)
+O\left(\mu_{\mathcal{P}}(z)^{k}\sum_{d\in\mathcal{D}_{k}(\mathcal{P})}|r_{d}(x)|\right),
\end{align}
here, $C_{k}$ is the constant defined in (\ref{teigi-C-j}), and $D_{k}(\mathcal{P})$ is the set of all integers, including 1, which are the product of
at most $k$ primes in $\mathcal{P}(z)$.
\end{proposition}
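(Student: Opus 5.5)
We follow the scheme used for Proposition \ref{GS-prop-3-odd}. By Lemma \ref{lemma-II},
\begin{align*}
\sum_{a\in\mathcal{A}(x)}(\omega_{\mathcal{P}}(a)-\mu_{\mathcal{P}}(z))^{k}=N(x)\sum_{p_{1},\ldots,p_{k}\in\mathcal{P}(z)}\mathcal{G}(r)+\sum_{p_{1},\ldots,p_{k}\in\mathcal{P}(z)}\mathcal{E}(r).
\end{align*}
Lemma \ref{lemma-V} (which needs only (H1) and (H3)) bounds the $\mathcal{E}$-sum by $O(\mu_{\mathcal{P}}(z)^{k}\sum_{d\in\mathcal{D}_{k}(\mathcal{P})}|r_{d}(x)|)$. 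It therefore suffices to show
\begin{align*}
\sum_{p_{1},\ldots,p_{k}\in\mathcal{P}(z)}\mathcal{G}(r)=C_{k}\sigma_{\mathcal{P}}(z)^{k}\left(1+O\left(\frac{k^{3}}{\sigma_{\mathcal{P}}(z)^{2}}\right)\right).
\end{align*}
By (iii) of Lemma \ref{lemma-III}, only square-full $r$ contribute, and we use the decomposition (\ref{42-s}) into $\mathcal{I}_{k/2}(k)+\sum_{s\leq k/2-1}\mathcal{I}_{s}(k)$.

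\textbf{The term $\mathcal{I}_{k/2}(k)$.} Here the only admissible exponent tuple is $\alpha_{1}=\cdots=\alpha_{k/2}=2$, so
\begin{align*}
\mathcal{I}_{k/2}(k)=\frac{k!}{2^{k/2}}\sum_{q_{1}<\cdots<q_{k/2}}\prod_{i}\mathcal{G}(q_{i}^{2})=C_{k}\sum_{q_{1},\ldots,q_{k/2}\ \textit{distinct}}\prod_{i}\mathcal{G}(q_{i}^{2}),
\end{align*}
using $C_{k}=k!/(2^{k/2}(k/2)!)$. Since $\mathcal{G}(q^{2})=\frac{h(q)}{q}(1-\frac{h(q)}{q})\geq 0$, dropping distinctness gives the upper bound $C_{k}\sigma_{\mathcal{P}}(z)^{k}$. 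For the lower bound we copy the argument of (\ref{46-s})--(\ref{50-s}). Restrict to $q_{i}\geq p^{*}$ from (H5$^{\prime}$) and peel off one variable at a time. The monotone decrease of $\mathcal{G}(q^{2})$ from (H5$^{\prime}$) shows that the excluded primes cost at most the first $(k-2)/2$ terms after $p^{*}$, and (H6$^{\prime}$) guarantees that the remaining sum is nonempty. This yields
\begin{align*}
\mathcal{I}_{k/2}(k)\geq C_{k}\left(\sigma_{\mathcal{P}}(z)^{2}-\tfrac14(l^{*}+k)\right)^{k/2},
\end{align*}
since $\mathcal{G}(q^{2})\leq 1/4$ and $l^{*}=O(k)$. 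Unlike the odd case, no analogue of (H7) is needed, because we sum $\mathcal{G}(q^{2})$ itself, whose full sum is exactly $\sigma_{\mathcal{P}}(z)^{2}$. By (H2) and (H3) the bracket is positive, and expanding with $k\cdot k/\sigma_{\mathcal{P}}(z)^{2}\ll k^{3}/\sigma_{\mathcal{P}}(z)^{2}$ gives $\mathcal{I}_{k/2}(k)=C_{k}\sigma_{\mathcal{P}}(z)^{k}(1+O(k^{2}/\sigma_{\mathcal{P}}(z)^{2}))$.

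\textbf{The terms $\mathcal{I}_{s}(k)$ with $s\leq k/2-1$.} By (i) and (iv) of Lemma \ref{lemma-III}, $|\mathcal{G}(q_{1}^{\alpha_{1}}\cdots q_{s}^{\alpha_{s}})|\leq\prod_{i}\mathcal{G}(q_{i}^{2})$; note $\mathcal{G}(q^{\alpha})$ for odd $\alpha$ may be negative, since (H4) is not assumed, but only absolute values are needed. Then, as in (\ref{atsui-atsui}),
\begin{align*}
\Bigl|\sum_{s\leq k/2-1}\mathcal{I}_{s}(k)\Bigr|\leq\sum_{s\leq k/2-1}\frac{k!}{2^{s}s!}\binom{k-s}{s}\sigma_{\mathcal{P}}(z)^{2s}.
\end{align*}
Setting $j=k/2-1-s$, we compare each term with $C_{k}\sigma_{\mathcal{P}}(z)^{k-2}$. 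The $j=0$ term is of size about $C_{k}k^{3}\sigma_{\mathcal{P}}(z)^{k-2}$, from $\frac{k!}{2^{k/2-1}(k/2-1)!}\binom{k/2+1}{k/2-1}\asymp C_{k}k\cdot k^{2}$. Each further increment of $j$ multiplies the term by roughly $k^{3}/\sigma_{\mathcal{P}}(z)^{2}$, divided by factorial-growing denominators, exactly as with (\ref{dango-1})--(\ref{dango-3}). By (H3) this ratio is $\leq 1$, so the series is dominated by a convergent sum $\sum 1/(2j+2)!$, and the total is $O(C_{k}k^{3}\sigma_{\mathcal{P}}(z)^{k-2})$.

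Combining the two parts gives the claimed main term with relative error $O(k^{3}/\sigma_{\mathcal{P}}(z)^{2})$, and with Lemma \ref{lemma-V} this yields (\ref{al-70}). The main obstacle is the bookkeeping in the last step: checking that the $j=0$ coefficient is really $\asymp k^{3}C_{k}$, and not $k^{9/2}C_{k}/\sqrt{k}$ as in the odd case, and that the ratio of consecutive terms is uniformly $\ll k^{3}/\sigma_{\mathcal{P}}(z)^{2}$. I would verify this directly with the Gamma-function estimate $C_{k-2}/C_{k}\asymp 1/k$.
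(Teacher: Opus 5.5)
Your proposal is correct and follows the paper's proof essentially step for step. You use Lemma~\ref{lemma-II} with Lemma~\ref{lemma-V} for the $\mathcal{E}$-sum, and then the argument of Lemma~\ref{lemma-IV-prime}: $\mathcal{I}_{k/2}(k)$ is bounded above by dropping distinctness and below by peeling variables via (H5$^{\prime}$), (H6$^{\prime}$) and $\mathcal{G}(p^{2})\leq 1/4$, while the terms $\mathcal{I}_{s}(k)$, $s\leq k/2-1$, are bounded via (iv) of Lemma~\ref{lemma-III} by $O(C_{k}k^{3}\sigma_{\mathcal{P}}(z)^{k-2})$, exactly as in (\ref{Gaan})--(\ref{al-66}), with your explicit absolute values and consecutive-term ratio check only making that bookkeeping slightly more careful.
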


Similar to the previous proof of Lemma \ref{lemma-IV}, we may obtain the next lemma.

\begin{lemma}[{cf.~Lemma \ref{lemma-IV}}]\label{lemma-IV-prime}
Let $k\geq 2$ be an even integer. For sufficiently large $z\geq 1$, on the hypotheses (H1)--(H3), (H5$^{\prime}$), and (H6$^{\prime}$)
we have uniformly in $k$ and $z$
\begin{align}\label{-70-lemma-IV-prime}
\sum_{p_{1},\ldots, p_{k}\in\mathcal{P}(z)}\mathcal{G}(r) =C_{k}\sigma_{\mathcal{P}}(z)^{k}
+O\left(C_{k}\sigma_{\mathcal{P}}(z)^{k-2}k^{3}\right).
\end{align}

\end{lemma}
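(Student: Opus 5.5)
We follow the proof of Lemma \ref{lemma-IV}, using the even case of the decomposition (\ref{42-s}):
\begin{align*}
\sum_{p_{1},\ldots,p_{k}\in\mathcal{P}(z)}\mathcal{G}(r)=\mathcal{I}_{\frac{k}{2}}(k)+\sum_{s\leq \frac{k}{2}-1}\mathcal{I}_{s}(k).
\end{align*}
We then show separately that $\mathcal{I}_{\frac{k}{2}}(k)=C_{k}\sigma_{\mathcal{P}}(z)^{k}(1+O(k^{2}/\sigma_{\mathcal{P}}(z)^{2}))$ and that the remaining sum is $O(C_{k}\sigma_{\mathcal{P}}(z)^{k-2}k^{3})$.

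\textbf{The main sum.} When $s=k/2$, the only admissible tuple is $\alpha_{1}=\cdots=\alpha_{k/2}=2$. Hence
\begin{align*}
\mathcal{I}_{\frac{k}{2}}(k)=\frac{k!}{2^{k/2}}\sum_{q_{1}<\cdots<q_{k/2}}\prod_{i}\mathcal{G}(q_{i}^{2})=C_{k}\sum_{\substack{q_{1},\ldots,q_{k/2}\in\mathcal{P}(z)\\ \textit{distinct}}}\prod_{i=1}^{k/2}\mathcal{G}(q_{i}^{2}),
\end{align*}
with $\mathcal{G}(q^{2})=\frac{h(q)}{q}(1-\frac{h(q)}{q})\ge 0$ by (ii) of Lemma \ref{lemma-III}. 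Dropping the distinctness condition gives the upper bound $C_{k}\sigma_{\mathcal{P}}(z)^{k}$, using only (H1).

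For the lower bound we use (H5$'$) and restrict to $q_{i}\geq p^{*}$. We peel off one variable at a time, exactly as in (\ref{46-s}). By monotonicity, each inner sum over $p\ne q_{i}$ is at least the sum of $\mathcal{G}(t^{3})$ replaced by $\mathcal{G}(t^{2})$ over $t\geq \pi_{l^{*}+\frac{k-2}{2}}(\mathcal{P})$. This sum is nonempty by (H6$'$). The discarded primes number $l^{*}+k/2=O(k)$, and each has $\mathcal{G}(p^{2})\le 1/4$. This gives
\begin{align*}
\mathcal{I}_{\frac{k}{2}}(k)\geq C_{k}\bigl(\sigma_{\mathcal{P}}(z)^{2}-O(k)\bigr)^{k/2}.
\end{align*}
By (H2)--(H3) we have $k\cdot k/\sigma_{\mathcal{P}}(z)^{2}\le k^{2}/\sigma^{2}\ll 1$, so the lower bound becomes $C_{k}\sigma_{\mathcal{P}}(z)^{k}(1+O(k^{2}/\sigma_{\mathcal{P}}(z)^{2}))$. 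Unlike the odd case, no analogue of (H4) or (H7) is needed here, since only $\mathcal{G}(q^{2})$ appears and no comparison between $\sum\mathcal{G}(p^{3})$ and $\sigma^{2}$ is required.

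\textbf{The remaining sum.} For $s\le k/2-1$ we use (i) and (iv) of Lemma \ref{lemma-III}, namely $|\mathcal{G}(q^{\alpha})|\le\mathcal{G}(q^{2})$. As in (\ref{atsui-atsui}) this gives
\begin{align*}
\Bigl|\sum_{s\le k/2-1}\mathcal{I}_{s}(k)\Bigr|\le\sum_{s\le \frac{k}{2}-1}\frac{k!}{2^{s}s!}\binom{k-s}{s}\sigma_{\mathcal{P}}(z)^{2s}.
\end{align*}
Put $j=\frac{k}{2}-1-s$ and factor out $C_{k}\sigma_{\mathcal{P}}(z)^{k-2}$. The $j$-th term is then
\begin{align*}
\frac{(k/2)!\,2^{j+1}}{(\frac{k}{2}-1-j)!}\cdot\frac{(\frac{k}{2}+1+j)!}{(\frac{k}{2}-1-j)!\,(2j+2)!}\,\sigma_{\mathcal{P}}(z)^{-2j}.
\end{align*}
The analogues of (\ref{dango-2}) and (\ref{dango-3}) bound this by $\ll k^{3}(k^{3}/\sigma_{\mathcal{P}}(z)^{2})^{j}/(2j+2)!$. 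The $j=0$ term is $\frac{k}{2}\cdot\frac{k(k+2)}{4}\asymp k^{3}$. Summing over $j$ with (H3) gives $O(C_{k}\sigma_{\mathcal{P}}(z)^{k-2}k^{3})$.

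\textbf{Conclusion and main obstacle.} Combining the two estimates, and noting that the error $C_{k}\sigma^{k}\cdot k^{2}/\sigma^{2}$ from the main sum is dominated by $C_{k}\sigma^{k-2}k^{3}$, yields (\ref{-70-lemma-IV-prime}). The main obstacle is the bookkeeping of factorials in the remaining sum. One must make sure the polynomial factor is exactly $k^{3}$, not worse, and that no $\Gamma(\frac{k}{2}+1)$ ratio mismatches by a $\sqrt{k}$ factor; this is where the even case differs from the odd one. If the direct count proves messy, I would instead compare each term with the $j=0$ term through the ratio of consecutive terms, which is $\ll k^{3}/\sigma_{\mathcal{P}}(z)^{2}\le 1$.
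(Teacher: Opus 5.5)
Your proposal is correct and follows the paper's proof essentially step for step. You use the same split of (\ref{42-s}) into $\mathcal{I}_{k/2}(k)$ and the lower-order terms, bound $\mathcal{I}_{k/2}(k)$ above by dropping distinctness and below via (H5$^{\prime}$), (H6$^{\prime}$) and $\mathcal{G}(p^{2})\le 1/4$, and treat the remaining sum with the same reindexing $j=\frac{k}{2}-1-s$, the bounds $k^{j+1}$ and $k^{2j+2}$ for the factorial ratios, and (H3), arriving at $O\bigl(C_{k}\sigma_{\mathcal{P}}(z)^{k-2}k^{3}\bigr)$ just as the paper does.
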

\begin{proof}
Since we may prove this as Lemma \ref{lemma-IV}, we shall describe its proof, briefly. 
First, in (\ref{42-s}) we see that
\begin{align}\label{al-59}
\mathcal{I}_{\frac{k}{2}}(k)
=C_{k}\sum_{\begin{subarray}{c}q_{1},\ldots, q_{\frac{k}{2}} \in\mathcal{P}(z)\\ \textit{distinct} \end{subarray}}
 \prod_{j=1}^{\frac{k}{2}}\frac{h(q_{j})}{q_{j}}\left(1-\frac{h(q_{j})}{q_{j}}\right),
\end{align}
by Lemma \ref{lemma-III}. From this, we have at once
\begin{align}\label{al-60}
\mathcal{I}_{\frac{k}{2}}(k) \leq C_{k} \left(\sum_{p\in\mathcal{P}(z)}\frac{h(p)}{p}\left(1-\frac{h(p)}{p}\right)\right)^{\frac{k}{2}}
=C_{k}\sigma_{\mathcal{P}}(z)^{k}.
\end{align} 
On the other hand, (H5$^{\prime}$), (H6$^{\prime}$), (H3), and (H2), we observe that
\begin{align}
\mathcal{I}_{\frac{k}{2}}(k) 
&\geq C_{k} \left(\sum_{\pi_{l^{*}+\frac{k-2}{2}}(\mathcal{P})\leq p \in\mathcal{P}(z)}\mathcal{G}\left(p^{2}\right)\right)^{\frac{k}{2}} \nonumber \\
&\geq C_{k} \left(\sum_{p\in\mathcal{P}(z)}\mathcal{G}\left(p^{2}\right) 
                  -\sum_{\begin{subarray}{c}p\in\mathcal{P}(z)\\ p \leq \pi_{l^{*}+\frac{k-2}{2}} (\mathcal{P})\end{subarray}}\mathcal{G}\left(p^{2}\right)
                  \right)^{\frac{k}{2}}\nonumber\\
& \quad (\textit{note that $0\leq \mathcal{G}(p^{2})\leq 1/4$})\nonumber \\
&\geq C_{k}\sigma_{\mathcal{P}}(z)^{k} \left(1-\frac{\frac{1}{4}c_{*}k}{\sigma_{\mathcal{P}}(z)^{2}}\right)^{\frac{k}{2}} \quad (\textit{$>0$ by (H3) and H(2)})\nonumber \\
&= C_{k} \sigma_{\mathcal{P}}(z)^{k} \left(1+O\left(\frac{k^{2}}{\sigma_{\mathcal{P}}(z)^{2}}\right)\right) \quad (\textit{by (H3)}). \label{al-64}
\end{align} 
Hence, by (\ref{al-60}) and (\ref{al-64}) we get
\begin{align}\label{al-65}
\mathcal{I}_{\frac{k}{2}}(k)=C_{k} \sigma_{\mathcal{P}}(z)^{k} +O\left(C_{k}\sigma_{\mathcal{P}}(z)^{k-2} k^{2}\right).
\end{align}
Next, in (\ref{42-s}), for $k\geq 4$ we observe that
\begin{align}
\sum_{s\leq \frac{k}{2}-1}\mathcal{I}_{s}(k) 
&\ll \sum_{s\leq\frac{k-2}{2}} \frac{k!}{s!2^{s}} \sigma_{\mathcal{P}}(z)^{2s} \binom{k-s}{s}\nonumber \\
&= C_{k}\sum_{s\leq \frac{k-2}{2}}\frac{\Gamma \left(\frac{k}{2}+1\right)2^{\frac{k}{2}-s}}{s!}\frac{(k-s)!}{s!(k-2s)!}
   \frac{\left(\sigma_{\mathcal{P}}(z)^{2}\right)^{\frac{k-2}{2}}}{\left(\sigma_{\mathcal{P}}(z)^{2}\right)^{\frac{k-2}{2}-s}}\nonumber \\
&=C_{k}\sigma_{\mathcal{P}}(z)^{k-2}\sum_{j=0}^{\frac{k-4}{2}} 
                \frac{\left(\frac{k}{2}\right)! 2^{j+1}}{\left(\frac{k-2}{2}-j\right)!}
                \frac{\left(\frac{k}{2}+1+j\right)!}{\left(\frac{k-2}{2}-j\right)!(2j+2)!}
                \frac{1}{\left(\sigma_{\mathcal{P}}(z)^{2}\right)^{j}}. \label{Gaan}
\end{align}
Noting
\begin{align*}
\frac{\left(\frac{k}{2}\right)!2^{j+1}}{\left(\frac{k-2}{2}-j\right)!}\ll k^{j+1},
\quad  \frac{\left(\frac{k}{2}+1+j\right)!}{\left(\frac{k-2}{2}-j\right)!}\ll k\cdot k^{2j+1}, 
\end{align*}
and (H3) we have
\begin{align}
(\textit{RHS of } (\ref{Gaan}))
&\ll C_{k}\sigma_{\mathcal{P}}(z)^{k-2} k^{3}\sum_{j=0}^{\frac{k-4}{2}}\frac{1}{(2j+2)!}\left(\frac{k^{3}}{\sigma_{\mathcal{P}}(z)^{2}}\right)^{j} \nonumber \\
&\ll C_{k}\sigma_{\mathcal{P}}(z)^{k-2}k^{3}. \label{al-66}    
\end{align}
Obviously it is an empty sum for $k=2$.
Therefore, from (\ref{42-s}), (\ref{al-65}), and (\ref{al-66}) we reach the assertion (\ref{-70-lemma-IV-prime}) of Lemma \ref{lemma-IV-prime}.
\end{proof}

\begin{remark}
As we have mentioned in Remark \ref{remark-ue-1}, it is enough to ask the hypotheses (H1) and (H3) to obtain
the upper bound:
\begin{align}\label{al-69}
\sum_{p_{1},\ldots, p_{k}\in\mathcal{P}(z)}\mathcal{G}(r) \ll C_{k}\sigma_{\mathcal{P}}(z)^{k},
\end{align} 
by (\ref{42-s}), (\ref{al-60}), and (\ref{al-66}).
\end{remark}

We now prove Proposition \ref{GS-prop-3-EVEN}.
\begin{proof}[Proof of Proposition \ref{GS-prop-3-EVEN}]
Let $k\geq 2$ be an even integer. Note that under the hypothesis (H1) we have the identity (\ref{hoshi}).
Beside (H3), using Lemma \ref{lemma-V} in (\ref{hoshi}) we observe that
\begin{align}\label{al-58}
&\sum_{a\in\mathcal{A}(x)}(\omega_{\mathcal{P}}(a)-\mu_{\mathcal{P}}(z))^{k} \nonumber\\
&=N(x)\sum_{p_{1},\cdots, p_{k}\in\mathcal{P}(z)}\mathcal{G}(r) +O\left(\mu_{\mathcal{P}}(z)^{k}\sum_{d\in\mathcal{D}_{k}(\mathcal{P})}|r_{d}(x)|\right).
\end{align}
Under the hypotheses (H1)--(H3), (H5$^{\prime}$), and (H6$^{\prime}$) we shall apply (\ref{-70-lemma-IV-prime}) in Lemma \ref{lemma-IV-prime} to the above (\ref{al-58}).
Immediately we obtain the assertion (\ref{al-70}) of Proposition \ref{GS-prop-3-EVEN}.
\end{proof}

\begin{remark}\label{royalmilktea3bai}
Under the hypotheses (H1) and (H3),  we obtain the upper bound:
\begin{align*}
\sum_{a\in\mathcal{A}(x)}(\omega_{\mathcal{P}}(a)-\mu_{\mathcal{P}}(z))^{k}
\ll C_{k}N(x) \sigma_{\mathcal{P}}(z)^{k}+\mu_{\mathcal{P}}(z)^{k}\sum_{d\in\mathcal{D}_{k}(\mathcal{P})}|r_{d}(x)|
\end{align*}
by (\ref{al-58}), and (\ref{al-69}). See, also Remark \ref{tokeigaugoku}.
\end{remark}

Observing the proofs of Propositions \ref{GS-prop-3-odd} and \ref{GS-prop-3-EVEN}, we feel that
the results are too general. Actually, for each concrete problem 
we have to consider how to handle error terms in $\mu_{\mathcal{P}}(z)$, $\sigma_{\mathcal{P}}(z)$ and we have to bound $\sum_{d\in\mathcal{D}_{k}(\mathcal{P})}|r_{d}(x)|$.
However, probably, these two propositions indicate a direction in the study.

\bigskip


%

\bigskip

\noindent Tokuhon Makoto Minamide\\
Graduate School of Sciences and Technology for Innovation\\
Yamaguchi University\\
Yoshida 1677-1, Yamaguchi 753-8512, Japan\\
E-mail: minamide@yamaguchi-u.ac.jp\\

\noindent Haruka Sakai\\
Graduate School of Sciences and Technology for Innovation\\
Yamaguchi University\\
Yoshida 1677-1, Yamaguchi 753-8512, Japan\\
E-mail: e003vbv@yamaguchi-u.ac.jp\\

\noindent Yoshio Tanigawa\\
Nishizato 2-13-1, Meito, Nagoya 465-0084, Japan\\
E-mail: tanigawa@math.nagoya-u.ac.jp
\end{document}